\newcommand{\longsquiggly}{\xymatrix{{}\ar@{<~>}[r]&{}}}
\theoremstyle{plain}
\newtheorem{thm}{Theorem}[section]
\newtheorem{lem}[thm]{Lemma}
\newtheorem{prop}[thm]{Proposition}
\newtheorem{cor}[thm]{Corollary}
\theoremstyle{definition}
\newtheorem{definition}[thm]{Definition}
\newtheorem{con}[thm]{Conjecture}
\newtheorem{exmpl}[thm]{Example}
\newtheorem{qstn}[thm]{Question}
\newtheorem{rem}[thm]{Remark}
\newtheorem*{rems}{Remarks}
\newcommand{\ind}{\mathbf 1}
\newcommand{\R}{\mathbb{R}}
\renewcommand{\d}{\textnormal{ d}}
\newcommand{\<}{\langle}
\renewcommand{\>}{\rangle}
\newcommand{\grad}{\nabla}
\newcommand{\hidethis}[1]{}
\newcommand{\E}{\mathbb{E}}
\newcommand{\supp}{\textnormal{support}}
\newcommand{\vol}{\textnormal{Vol}}
\newcommand{\var}{\textnormal{Var}}
\newcommand{\del}{\partial}
\newcommand{\wasser}{\mathscr{W}_{2}}
\newcommand{\s}{\mathfrak{s}}
\newcommand{\cc}{\mathfrak{c}}
\newcommand{\RR}{\mathbb{R}}
\newcommand{\dd}{\mathrm{d}}
\newcommand{\DD}[2]{\operatorname{D}\left(#1\middle\Vert#2\right)}
\begin{document}

\begin{abstract}
We study the connection between the concavity properties of a measure $\nu$ and the convexity properties of the associated relative entropy $D(\cdot \Vert \nu)$ along optimal transport. As a corollary we prove a new dimensional Brunn--Minkowski inequality for centered star-shaped bodies, when the measure $\nu$ is log-concave with a p-homogeneous potential (such as the Gaussian measure). Our method allows us to go beyond the usual convexity assumption on the sets that is fundamentally essential for the standard differential-geometric technique in this area. 

We then take a finer look at the convexity properties of the Gaussian relative entropy, which yields new  functional inequalities. First we obtain curvature and dimensional reinforcements to Otto--Villani's HWI inequality in Gauss space, when restricted to even strongly log-concave measures. As corollaries, we obtain improved versions of Gross' Logarithmic Sobolev inequality and Talagrand's transportation cost inequality in this setting. 
\end{abstract}

\title[New Brunn--Minkowski and functional inequalities]{New Brunn--Minkowski and functional inequalities via convexity of entropy}
\author{Gautam Aishwarya}
\address{Faculty of Mathematics, Technion - Israel Institute of Technology, Haifa 3200003, Israel.}
\email{gautama@campus.technion.ac.il}
\curraddr{Department of Mathematics, Michigan State University, East Lansing 48824, USA.}
\email{aishwary@msu.edu}
\author{Liran Rotem}
\address{Faculty of Mathematics, Technion - Israel Institute of Technology, Haifa 3200003, Israel.}
\email{lrotem@technion.ac.il}
\date{}
\maketitle

\section{Introduction}

\subsection{Improved Brunn--Minkowski inequalities}
Let $\nu$ be a Borel measure on $\RR^{n}$. We say that $\nu$ satisfies
a Brunn--Minkowski inequality with exponent $a$ if for
all Borel sets $K_{0}, K_{1} \subseteq\RR^{n}$ satisfying $\nu(K_{0}),\nu(K_{1})>0$ and
every $0\le t \le1$ we have
\begin{equation} \label{eq:BM}
\nu\left((1-t)K_{0}+ t K_{1}\right)\ge\left((1-t)\nu(K_{0})^{a}+ t \nu(K_{1})^{a}\right)^{1/a}.
\end{equation}
Here $(1-t)K_{0} +t K_{1}=\left\{ (1-t) x+ t  y:\ x\in K_{0},\ y\in K_{1}\right\} $ is the $t$-\emph{Minkowski average} of the sets $K_{0}$ and $K_{1}$. Note that the inequality becomes stronger as $a$ increases. The prototypical
example is the classical Brunn\textendash Minkowski inequality, which
states that the Lebesgue measure satisfies (\eqref{eq:BM}) with exponent
$a=\frac{1}{n}$ (see e.g. \cite{Gardner02} for a survey on the
Brunn\textendash Minkowski inequality and its importance). When $\nu$ is not the Lebesgue measure, inequalities such as \eqref{eq:BM} are sometimes called weighted Brunn--Minkowski inequalities.

More generally, from the works of Borell (\cite{Borell74}, \cite{Borell75}) and Brascamp--Lieb (\cite{BrascampLieb76BBL}), a characterization of all measures $\nu$ which satisfy (\eqref{eq:BM}) for some exponent
$a$ is known: 
\begin{thm} \label{thm: Borell}
Let $\nu$ be a Borel measure on $\RR^{n}$ with density $\phi$ with
respect to the Lebesgue measure on $\R^n$, and fix $a\in[-\infty,\frac{1}{n}]$.
Then the following are equivalent: 
\begin{enumerate}
\item $\nu$ satisfies a Brunn\textendash Minkowski inequality with exponent
$a$ (we also say that $\nu$ is $a$-concave). 
\item For all $x,y\in\RR^{n}$ with $\phi(x),\phi(y)>0$ and every $0\le t \le1$
we have 
\begin{equation}
\phi\left(\left(1-t\right)x+t y\right)\ge\left((1-t)\phi(x)^{b}+t\phi(y)^{b}\right)^{1/b},
\end{equation}
 where $\frac{1}{a}=\frac{1}{b}+n$ (we also say that $\phi$ is $b$-concave). 
\end{enumerate}
\end{thm}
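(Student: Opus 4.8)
The statement is an equivalence, and I would prove the two implications by genuinely different mechanisms: the implication $(2)\Rightarrow(1)$ is the analytic heart and follows from the Borell--Brascamp--Lieb inequality, whereas $(1)\Rightarrow(2)$ is a soft ``localization'' (differentiation) argument. Throughout one fixes the conventions for the power means $M_c^t(\alpha,\beta)=((1-t)\alpha^c+t\beta^c)^{1/c}$, interpreted as the geometric mean when $c=0$, as $\max$ or $\min$ when $c=\pm\infty$, and as $0$ whenever $c<0$ and $\alpha\beta=0$. With these conventions the relation $\tfrac1a=\tfrac1b+n$ sends the admissible range $a\in[-\infty,\tfrac1n]$ bijectively onto $b\in[-\tfrac1n,+\infty]$, which is exactly the range in which the two halves below are valid.

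For $(2)\Rightarrow(1)$, suppose $\phi$ is $b$-concave and fix Borel sets $K_0,K_1$ and $t\in[0,1]$. I would apply the Borell--Brascamp--Lieb inequality to the three functions $f=\phi\,\ind_{K_0}$, $g=\phi\,\ind_{K_1}$, and $h=\phi\,\ind_{(1-t)K_0+tK_1}$. The pointwise hypothesis $h((1-t)x+ty)\ge M_b^t(f(x),g(y))$ holds for all $x,y$: when $x\in K_0$ and $y\in K_1$ it is precisely the $b$-concavity of $\phi$ together with $(1-t)x+ty\in(1-t)K_0+tK_1$, and otherwise the right-hand side vanishes. Borell--Brascamp--Lieb then yields $\int h\ge M_a^t(\int f,\int g)$ with $\tfrac1a=\tfrac1b+n$, which is exactly $\nu((1-t)K_0+tK_1)\ge M_a^t(\nu(K_0),\nu(K_1))$, i.e.\ \eqref{eq:BM}. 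The content is thereby pushed entirely into Borell--Brascamp--Lieb, which I would establish in the standard way: prove the one-dimensional case directly (reducing by scaling to unit masses and comparing distribution functions), then induct on the dimension by integrating the statement over slices perpendicular to a coordinate axis via Fubini.

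For $(1)\Rightarrow(2)$, I would localize. Fix points $x_0,x_1$ and set $x_t=(1-t)x_0+tx_1$. Applying \eqref{eq:BM} to the balls $K_i=B(x_i,r_i)$, and using that the $t$-Minkowski average of two balls is the ball $B(x_t,(1-t)r_0+tr_1)$, gives $\nu(B(x_t,(1-t)r_0+tr_1))\ge M_a^t(\nu(B(x_0,r_0)),\nu(B(x_1,r_1)))$. Writing $r_i=\lambda_i\rho$ and letting $\rho\to0$, at Lebesgue points of $\phi$ one has $\nu(B(z,s\rho))=\omega_n(s\rho)^n(\phi(z)+o(1))$ with $\omega_n=\vol(B(0,1))$, so after dividing by $\omega_n\rho^n$ the inequality passes to the limit
\[
\phi(x_t)\,\big((1-t)\lambda_0+t\lambda_1\big)^n\;\ge\;\Big((1-t)\phi(x_0)^a\lambda_0^{na}+t\,\phi(x_1)^a\lambda_1^{na}\Big)^{1/a}.
\]
This holds for all $\lambda_0,\lambda_1>0$, and optimizing the resulting (degree-zero homogeneous) ratio over $\lambda_0/\lambda_1$ extracts the sharp exponent: the critical ratio satisfies $\phi(x_0)^a(\lambda_0/\lambda_1)^{na-1}=\phi(x_1)^a$, and substituting it collapses the right-hand side exactly to $M_b^t(\phi(x_0),\phi(x_1))$ with $\tfrac1b=\tfrac1a-n$. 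This proves pointwise $b$-concavity at every triple of Lebesgue points; since almost every point is a Lebesgue point and, for fixed $t$, the affine map $(x_0,x_1)\mapsto x_t$ pulls null sets back to null sets, the inequality holds for almost every pair $(x_0,x_1)$, and a routine argument then produces a genuinely $b$-concave representative of $\phi$.

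The main obstacle is the $(2)\Rightarrow(1)$ direction, and within it the one-dimensional Borell--Brascamp--Lieb inequality: this is the single genuinely analytic step, where the power-mean exponent first shifts by $1$ and where the conventions for negative exponents and vanishing masses must be handled with care. The dimension induction and the localization in the converse direction are comparatively mechanical, the only real care there being the measure-theoretic upgrade from an almost-everywhere pointwise inequality to an honest $b$-concave density.
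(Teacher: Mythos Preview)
The paper does not prove Theorem~\ref{thm: Borell}; it is stated as a classical result and attributed to Borell \cite{Borell1974,Borell1975}, so there is no proof in the text to compare your proposal against. Your outline is the standard and correct route: the implication $(2)\Rightarrow(1)$ via the Borell--Brascamp--Lieb inequality (with the usual one-dimensional base case plus Fubini induction), and $(1)\Rightarrow(2)$ by localizing on small balls at Lebesgue points and optimizing over the radius ratio to extract the exponent $b$ from $a$.
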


For simplicity we assumed that $\nu$ has a density, though Borell's original result is slightly more general. Note that the cases where $a$ or
$b$ belong to $\left\{ -\infty,0,+\infty\right\} $ are interpreted
in the limiting sense: We define $\left((1-\lambda)x^{a}+\lambda y^{a}\right)^{1/a}$
to be $\max(x,y)$ if $a=\infty$, $x^{1-\lambda}y^{\lambda}$ if
$a=0$, and $\min(x,y)$ if $a=-\infty$. 

Of special interest to us will be the case $a=b=0$, when Borell's
theorem essentially reduces to the Prékopa\textendash Leindler inequality (\cite{Prekopa71, Leindler72}):
A measure $\nu$ is \emph{log-concave}, i.e. it satisfies 
\begin{equation}
\nu\left((1-t)K_{0}+t K_{1}\right)\ge\nu(K_{0})^{1-t}\nu(K_{1})^{t},
\end{equation}
 if and only if its density $\phi$ is of the form $\phi=e^{-V}$
where $V:\RR^{n}\to[0,\infty)$ is a convex function.

Borell's theorem gives us the best possible Brunn\textendash Minkowski
inequality satisfied by a measure $\nu$ for \emph{all} Borel sets
$K_{0}$ and $K_{1}$. However, a better inequality may hold for specific
choices of $K_{0}$ and $K_{1}$. For example, let $\gamma$ denote the standard
Gaussian measure on $\RR^{n}$, i.e. the measure with density $\frac{\dd\gamma}{\dd x}=\frac{1}{\left(2\pi\right)^{n/2}}e^{-\left|x\right|^{2}/2}$.
This is a log-concave measure, but in general it doesn't satisfy the
Brunn\textendash Minkowski inequality with any exponent $a>0$. However,
Kolesnikov\textendash Livshyts (\cite{KolesnikovLivshyts21}) proved that
if $K_{0},K_{1}\subseteq\RR^{n}$ are convex sets such that $0\in K_{0}\cap K_{1}$,
then we have 
\begin{equation} 
\gamma\left((1-t)K_{0} + t K_{1}\right)^{\frac{1}{2n}}\ge(1-t)\gamma(K_{0})^{\frac{1}{2n}}+t \gamma(K_{1})^{\frac{1}{2n}}.\label{eq:gassian-BM-2n}
\end{equation}

Eskenazis and Moschidis (\cite{EskenazisMoschidis21}) further proved that if $K_{0},K_{1}\subseteq\RR^{n}$
are convex and \emph{origin-symmetric} (i.e. $K_{0}=-K_{0}$ and $K_{1}=-K_{1}$),
the exponent $\frac{1}{2n}$ improves to $\frac{1}{n}$, which is
the best possible. This settled a conjecture of Gardner and Zvavitch
(\cite{GardnerZvavitch10}). In fact, the original conjecture was that perhaps
the condition $0\in K_{0}\cap K_{1}$ is sufficient to have (\eqref{eq:gassian-BM-2n})
with exponent $\frac{1}{n}$, but Nayar and Tkocz (\cite{NayarTkocz13})
proved that this is not the case and suggested that central symmetry
may be the correct condition.

More generally, the following conjecture is widely believed:
\begin{con}
\label{conj:DBM}Let $\nu$ be an even log-concave measure on $\RR^{n}$.
Then for every $K_{0},K_{1} \subseteq\RR^{n}$ which are convex and origin-symmetric
we have 
\begin{equation}
\nu\left((1-t)K_{0}+t K_{1}\right)^{\frac{1}{n}}\ge(1-t)\nu(K_{0})^{\frac{1}{n}}+ t\nu(K_{1})^{\frac{1}{n}}.
\end{equation}
\end{con}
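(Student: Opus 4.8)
The plan is to pass from the global set inequality to a local statement about the convexity of relative entropy along optimal transport, following the strategy advertised in the abstract. Write $\nu=e^{-V}\,\dd x$ with $V$ convex and even, and for $i=0,1$ normalize the restrictions $\mu_i=\nu(K_i)^{-1}\ind_{K_i}\,\nu$. Let $(\mu_t)_{t\in[0,1]}$ be the displacement interpolation joining $\mu_0$ to $\mu_1$, realized through the Brenier map $T=\grad\varphi$ (with $\varphi$ convex) transporting $\mu_0$ to $\mu_1$, so that $\mu_t=(T_t)_\#\mu_0$ with $T_t=(1-t)\,\mathrm{id}+tT$. The geometric input is that $T$ carries ($\nu$-almost all of) $K_0$ into $K_1$, whence $\supp\mu_t\subseteq(1-t)K_0+tK_1$. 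Since among probability measures supported on a Borel set $S$ the functional $\DD{\cdot}{\nu}$ is minimized by $\nu(S)^{-1}\ind_S\,\nu$ with value $-\log\nu(S)$, I obtain the chain
\[
-\log\nu\big((1-t)K_0+tK_1\big)\ \le\ \DD{\mu_t}{\nu}\ \le\ (1-t)\DD{\mu_0}{\nu}+t\DD{\mu_1}{\nu},
\]
the right-hand inequality being the convexity of entropy along the geodesic. As $\DD{\mu_i}{\nu}=-\log\nu(K_i)$, mere convexity already recovers the log-concave ($a=0$) bound; the entire problem is to sharpen this to exponent $1/n$.

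The analytic core is therefore a \emph{quantitative} strengthening of the convexity of $t\mapsto\DD{\mu_t}{\nu}$. Differentiating twice along the geodesic splits the second derivative into a curvature term $\int\langle(\grad^2V)\dot T_t,\dot T_t\rangle\,\dd\mu_t\ge0$, nonnegative precisely because $\nu$ is log-concave, and a term built from the logarithmic Jacobian of the transport controlling the Hilbert--Schmidt norm of $\grad^2\varphi$. To convert nonnegativity into a genuine dimensional gain I would retain, rather than discard, the trace part of the Jacobian via the Cauchy--Schwarz bound $(\tr A)^2\le n\,\|A\|_{HS}^2$; this is the sole mechanism by which the ambient dimension enters, and it plays the role that concavity of $\det^{1/n}$ plays in the classical Lebesgue proof. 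Integrating the resulting differential inequality in $t$, after the routine approximation needed to handle the non-smooth densities of the indicators, is what would upgrade the baseline exponent $0$ to the exponent $\tfrac1{2n}$ of Kolesnikov--Livshyts under the weaker hypothesis $0\in K_0\cap K_1$.

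The decisive point, and the one I expect to be the main obstacle, is recovering the missing factor of two to reach $\tfrac1n$, for which the \emph{even} hypothesis is indispensable. By uniqueness of the Brenier map, evenness of $\nu$ together with origin-symmetry of $K_0,K_1$ should force $T=\grad\varphi$ to be odd (equivalently $\varphi$ even), so that the first-order, barycenter-type contribution produced by the integration by parts — exactly the linear test functions that obstruct the stronger inequality and witness the Nayar--Tkocz counterexamples in the non-symmetric regime — vanishes identically; this is what closes the gap between $\tfrac1{2n}$ and $\tfrac1n$. The hard part will be making this cancellation rigorous for an \emph{arbitrary} even log-concave $\nu$: once the linear term is gone one still needs a quantitative lower bound on the surviving curvature term to absorb the error in the differential inequality, and for a general convex $V$ no such pointwise control is available. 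For this reason I anticipate that the method genuinely delivers the conjecture only under a strengthened assumption on the potential — such as $p$-homogeneity or uniform convexity of $V$, which supply the requisite curvature estimate — and that the fully general Conjecture~\ref{conj:DBM} remains beyond its reach, consistent with the more restricted (centered, star-shaped, homogeneous-potential) theorem proved here.
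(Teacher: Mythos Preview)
The statement you are attempting to prove is labeled a \emph{conjecture} in the paper and is \emph{not} proved there; the paper establishes only the weaker Theorem~\ref{thm:main-BM intro} (exponent $\frac{p-1}{pn}$ for $p$-homogeneous potentials, without symmetry) and, in the Gaussian case, the $(2,n)$-convexity of $D(\cdot\Vert\gamma)$ along certain specific geodesics (Proposition~\ref{prop: oneendgaussian}). You yourself reach this conclusion in your final paragraph, so your ``proof proposal'' is really an outline of why the method falls short, which is the honest assessment.

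Where your outline departs from the paper's actual arguments is worth noting. For the $\frac{p-1}{pn}$ result the paper does \emph{not} differentiate $D(\cdot\Vert\nu)$ directly; it splits $D(\cdot\Vert\nu)=-h(\cdot)+\mathcal V(\cdot)$, invokes the Erbar--Kuwada--Sturm displacement concavity of $e^{h/n}$ for the first piece, handles $e^{-\frac{p-1}{n}\mathcal V}$ separately via Lemmas~\ref{lem: the thing when V is p homo}--\ref{lem: for radially decreasing}, and recombines using concavity of $(x,y)\mapsto x^{1-1/p}y^{1/p}$. Your direct second-derivative route is closer to the paper's Section~\ref{sec: gaussian}, but there the gap between $\frac{1}{2n}$ and $\frac{1}{n}$ is closed not by a vanishing ``barycenter term'' as you suggest, but by applying a Poincar\'e inequality with constant $1$ to the \emph{odd} partial derivatives $\partial_i v$ of an auxiliary even function (Theorem~\ref{thm: 2nconunderconditions}, following Eskenazis--Moschidis). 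The obstruction the paper identifies is not a missing curvature lower bound on $\nabla^2 V$ per se, but the fact that membership in the class $S_n$ (even measures satisfying this odd-Poincar\'e bound) is not known to be preserved along displacement interpolation between $\gamma_{K_0}$ and $\gamma_{K_1}$---even in the Gaussian case. Your diagnosis that homogeneity or uniform convexity of $V$ is what the method truly needs is therefore only partially accurate: for the full exponent $\frac{1}{n}$ the missing ingredient is a spectral (Poincar\'e-type) stability statement along Wasserstein geodesics, not merely pointwise control of $\nabla^2 V$.
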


One reason to believe this conjecture is the paper \cite{LivshytsMarsigliettiNayarZvavitch17},
which shows that it would follow from another important conjecture,
the log-Brunn\textendash Minkowski conjecture. In particular, the
results of Böröczky\textendash Lutwak\textendash Yang\textendash Zhang
(\cite{BoroczkyLutwakYangZhang12}) show that Conjecture \ref{conj:DBM} holds
in dimension $n=2$. 

In dimensions $n>2$, not much is known. It is immediate from Borell's
theorem that uniform measures on convex bodies satisfy the Brunn\textendash Minkowski
inequality with exponent $\frac{1}{n}$ without any symmetry assumptions.
The results of \cite{Cordero-ErasquinRotem23} show that the conjecture
holds in particular for rotation-invariant log-concave measures (as
well as some rotation-invariant measures that are not log-concave). Livshyts
(\cite{Livshyts23}) proved that the conjecture holds for all even
log-concave measures, but with the optimal exponent $\frac{1}{n}$
replaced by $\frac{1}{n^{4+o(1)}}$. 

The proofs of all the results mentioned so far followed the same general
strategy. The idea was developed by Kolesnikov and Milman (\cite{KolesnikovMilman18},
\cite{KolesnikovMilman22}) to study various Brunn\textendash Minkowski
type inequalities, both on $\RR^{n}$ and in the setting of Riemannian
manifolds. The details will not be very important for us, but roughly
speaking one tries to prove that $\frac{\dd^{2}}{\dd t^{2}}\left(\nu\left((1-t)K_{0}+t K_{1}\right)^{a}\right)\le0$
by directly computing this second derivative. Without loss of generality
it is enough to check this at $t=0$, which gives a functional inequality
for functions $f:\partial K_{0} \to\RR$. Then the crucial step is to transform
this inequality to an inequality for functions $u:K_{0} \to\RR$ by taking
$u$ to be the solution of a certain elliptic PDE with $f$ as its
Neumann boundary condition. As far as we can tell this strategy strongly
depends on the convexity of $K_{0}$ and $K_{1}$, and without this assumption
no results are possible. 

\subsection{Weighted Brunn--Minkowski inequalities for star bodies}
Despite the difficulties mentioned above when dealing with non-convex sets, in this paper we prove the following theorem:
\begin{thm}
\label{thm:main-BM intro}Let $V:\RR^{n}\to[0,\infty)$ be a $p$-homogeneous
convex function for $1<p<\infty$, and let $\nu$ be a measure with
density $\frac{\dd\nu}{\dd x}$ proportional to $e^{-V}$. Let $K_{0},K_{1}\subseteq\RR^{n}$
be star bodies. Then for all $0\le t \le1$ we have
\begin{equation}
\text{\ensuremath{\nu\left((1-t)K_{0}+t K_{1}\right)^{\frac{p-1}{pn}}\ge(1-t)\nu(K_{0})^{\frac{p-1}{pn}}+ t \nu\left(K_{1}\right)^{\frac{p-1}{pn}}.}}
\end{equation} 
\end{thm}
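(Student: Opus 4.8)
The plan is to prove the inequality by the mass\textendash transport route suggested by the title: instead of differentiating $\nu\big((1-t)K_0+tK_1\big)$ directly (which seems to need convexity of the sets), I would deduce it from a convexity statement for relative entropy along displacement interpolation, a statement that is indifferent to convexity of the supports.

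First I would set $N=\frac{pn}{p-1}$, so the target exponent is $\frac{p-1}{pn}=\frac1N$, and pass to probability measures. Writing $m_i=\int_{K_i}e^{-V}$, let $\mu_i=\nu\restriction K_i/\nu(K_i)$, let $T=\grad\varphi$ be the Brenier map pushing $\mu_0$ to $\mu_1$, and let $\mu_t=\big((1-t)\,\mathrm{id}+tT\big)_{\#}\mu_0$ be the displacement interpolation. The elementary but decisive point is that $T(x)\in K_1$ for $\mu_0$\textendash a.e.\ $x\in K_0$, so every point of $\support\mu_t$ has the form $(1-t)x+tT(x)$ with $x\in K_0,\ T(x)\in K_1$, whence $\support\mu_t\subseteq(1-t)K_0+tK_1=:K_t$ \emph{with no convexity hypothesis on $K_0,K_1$}. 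This is exactly what admits star bodies. I would stress that the Brenier coupling genuinely reproduces the angular spreading of the Minkowski sum, so it cannot be replaced by a naive radial (dual\textendash Minkowski) rearrangement, which would lose too much mass.

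Next I would introduce the relative R\'enyi entropy $\mathcal E_N(\mu):=-\int(\dd\mu/\dd\nu)^{1-1/N}\,\dd\nu$ and reduce the theorem to a single convexity property. A one\textendash line H\"older estimate gives, for every probability measure supported in a Borel set $K$, the bound $-\mathcal E_N(\mu)=\int_K(\dd\mu/\dd\nu)^{1-1/N}\dd\nu\le\nu(K)^{1/N}$, with equality exactly when $\mu=\nu\restriction K/\nu(K)$. At the endpoints this yields $-\mathcal E_N(\mu_i)=\nu(K_i)^{1/N}$, while at the interpolant, combined with $\support\mu_t\subseteq K_t$, it gives $-\mathcal E_N(\mu_t)\le\nu(K_t)^{1/N}$. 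Consequently, \emph{if} $t\mapsto\mathcal E_N(\mu_t)$ is convex, then
\[
\nu(K_t)^{\frac{p-1}{pn}}\ \ge\ -\mathcal E_N(\mu_t)\ \ge\ (1-t)\,\nu(K_0)^{\frac{p-1}{pn}}+t\,\nu(K_1)^{\frac{p-1}{pn}},
\]
which is precisely the assertion. Note that $1/N=a$ forces $N=\frac{pn}{p-1}$, specializing to $N=2n$ for $p=2$ and thus recovering the Kolesnikov\textendash Livshyts exponent.

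The entire difficulty is therefore the displacement convexity of $\mathcal E_N$, i.e.\ the convexity of entropy advertised in the title. Changing variables along the geodesic reduces it to the concavity in $t$ of $\int_{\RR^n}\rho_0^{\,1-1/N}\big(\det D_t\,e^{-V(T_t)}\big)^{1/N}\dd x$, where $D_t=(1-t)I+t\grad^2\varphi$ and $T_t=(1-t)\,\mathrm{id}+tT$. A \emph{pointwise} concavity of the integrand is equivalent to the curvature\textendash dimension bound $CD(0,N)$, namely $\grad^2V\succeq\frac{p-1}{n}\grad V\otimes\grad V$; but Euler's identities for a $p$\textendash homogeneous $V$, $\langle\grad V(x),x\rangle=pV(x)$ and $\langle\grad^2V(x)x,x\rangle=p(p-1)V(x)$, show that this holds only where $V\le n/p$, hence \emph{fails} far from the origin. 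I expect this failure to be the heart of the matter. The resolution I would pursue keeps the homogeneity in its sharp form $\grad^2V\succeq\frac{p-1}{pV}\grad V\otimes\grad V$ (equivalently: $V^{1/p}$ is a convex $1$\textendash homogeneous gauge) and couples it, through the Monge\textendash Amp\`ere relation $\det\grad^2\varphi=\frac{m_1}{m_0}e^{V(T)-V}$, to an integration by parts against the Euler field $x$. It is here that star\textendash shapedness about the origin enters decisively, forcing the boundary contributions $\langle x,n_{\partial K}\rangle\ge0$ to carry a favourable sign: the deficit in $CD(0,N)$ incurred where $V$ is large is compensated by the fact that a star body is integrated radially outward from the origin, and the bookkeeping should close exactly at the critical value $N=\frac{pn}{p-1}$.
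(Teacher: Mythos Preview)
Your overall architecture matches the paper's: reduce the Brunn--Minkowski inequality to a displacement convexity statement for an entropy-type functional, use the variational formula to pin down the endpoints, and exploit $\support\mu_t\subseteq(1-t)K_0+tK_1$ to bound the middle. You also correctly diagnose the central obstacle: the pointwise $CD(0,N)$ condition $\grad^2V\succeq\frac1{N-n}\grad V\otimes\grad V$ fails outside the sublevel set $\{V\le n/p\}$, so the standard Lott--Sturm--Villani argument does not apply, and something integral must replace it.

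Where your proposal diverges from the paper, and where it remains a genuine gap, is in the choice of functional and the mechanism that closes the deficit. You work with the R\'enyi entropy $\mathcal E_N(\mu)=-\int(\dd\mu/\dd\nu)^{1-1/N}\dd\nu$; the paper instead uses the \emph{exponentiated relative entropy} $e^{-\frac1N D(\cdot\Vert\nu)}$, and explicitly remarks that while the variational principle works equally well for R\'enyi divergence, the authors ``do not know if it can be utilized to prove the dimensional Brunn--Minkowski inequalities'' of this section. The reason the exponentiated relative entropy succeeds is a clean multiplicative splitting that has no analogue for $\mathcal E_N$: writing $D(\mu\Vert\nu)=-h(\mu)+\mathcal V(\mu)$ with $\mathcal V(\mu)=\int V\dd\mu$, one has
\[
e^{-\frac{p-1}{pn}D(\mu\Vert\nu)}\ =\ \text{const}\cdot\bigl(e^{h(\mu)/n}\bigr)^{1-1/p}\bigl(e^{-\frac{p-1}{n}\mathcal V(\mu)}\bigr)^{1/p},
\]
so it suffices to prove displacement concavity of each factor separately and combine via the concavity of $(x,y)\mapsto x^{1-1/p}y^{1/p}$. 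The first factor is handled by the Erbar--Kuwada--Sturm theorem (entropic $CD(0,n)$ for Lebesgue measure). For the second, the paper computes $\frac{\dd}{\dd t}\mathcal V(\mu_t)=\int\langle\grad V,\grad\theta_t\rangle\dd\mu_t$ and $\frac{\dd^2}{\dd t^2}\mathcal V(\mu_t)=\int\langle\grad^2V\,\grad\theta_t,\grad\theta_t\rangle\dd\mu_t$, applies Cauchy--Schwarz with weight $\grad^2V$, and uses Euler's relation $(\grad^2V)^{-1}\grad V=\frac{x}{p-1}$ to reduce everything to the scalar bound $\int\langle\grad V,x\rangle\dd\mu\le n$. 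This last inequality is where star-shapedness enters: it is equivalent to $\mathcal V(\mu)\le n/p$ and holds precisely when $\dd\mu/\dd\nu$ is radially decreasing, proved by differentiating $s\mapsto\int f(sx)\dd\nu$ at $s=1$ (integration by parts over all of $\RR^n$, with no boundary term).

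Your proposed resolution---coupling the sharp homogeneity inequality $\grad^2V\succeq\frac{p-1}{pV}\grad V\otimes\grad V$ to the Monge--Amp\`ere equation and integrating by parts against the Euler field, with star-shapedness supplying a signed boundary term $\langle x,n_{\partial K}\rangle\ge0$---is not carried out, and it is not clear it can be: the R\'enyi integrand $\rho_0^{1-1/N}(\det D_t\,e^{-V(T_t)})^{1/N}$ mixes the determinant and the potential nonlinearly, so there is no obvious way to isolate a term on which a single Cauchy--Schwarz step converts the pointwise failure of $CD(0,N)$ into an integral condition on $\mu$ alone. The paper's splitting is exactly what decouples these two ingredients. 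If you want to salvage the R\'enyi route you would need a substantially new argument; as written, the last paragraph is a hope rather than a proof.
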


To explain the terminology, we say that $K\subseteq\RR^{n}$ is a star
body if $K$ is a Borel set such that for all $x\in K$ and $0\le \lambda \le1$
we have $\lambda x\in K$ (i.e. the ``star'' of $K$ is always assumed
to be at the origin). Also recall that $V$ is $p$-homogeneous if
$V(\lambda x)=\lambda^{p}V(x)$ for all $x\in\RR^{n}$ and $\lambda>0$. 

Note that one specific choice, $V(x)=\frac{1}{2}\left|x\right|^{2}$,
gives the Kolesnikov\textendash Livshyts result (\eqref{eq:gassian-BM-2n}),
but this time without the convexity assumption on the sets. For any
fixed $p>1$ the exponent $\frac{p-1}{pn}$ is asymptotically (for
large $n$) better than the result of \cite{Livshyts23}, and again
we need no convexity or symmetry assumptions, but of course our result
is only for homogeneous potentials and not for all log-concave measures.
Note that as $p\to\infty$ we have $\frac{p-1}{pn}\to\frac{1}{n}$,
the optimal exponent. This makes sense since for large $p$ the measure
$\nu$ is almost uniform on some convex body, and such uniform measures
do satisfy the Brunn\textendash Minkowski inequality with exponent
$\frac{1}{n}$, applicable to all Borel sets. 

As mentioned above, as far as we know none of the previously used
techniques can prove a result like Theorem \ref{thm:main-BM intro}. Our
proof will therefore follow completely different lines, using tools
of optimal transport and information theory. We lift the problem at hand from the realm of geometry of sets to the realm of geometry of measures, where calculus is sometimes ``smoother'' than in classical differential geometry. Arguably, even for the Gaussian measure, our proof is simpler than the known arguments.

The theory of optimal transport gives us a natural way to interpolate between two probability measures $\mu_{0}$ and $\mu_{1}$, such that the interpolant $\mu_{t}$ is supported in the Minkowski average $(1-t) \cdot \supp(\mu_{0}) + t \cdot \supp(\mu_{1})$ of the two supports, thereby playing a role parallel to the Minkowski averaging of sets. Along these interpolations, the relative entropy $D(\cdot \Vert \nu)$ enjoys convexity properties of various degrees depending on $\nu$. The reader will find precise definitions in Section \ref{sec: overview}. However, the main point is that a variational formula for $\nu$ can be written in terms of the relative entropy $D(\cdot \Vert \nu)$,
\begin{equation} \label{eq: variationalmeasureIntro}
\sup_{\mu \in \mathcal{P}(K)} e^{- D(\mu \Vert \nu)} = \nu (K),
\end{equation}
where the supremum runs over all probability measures on $K$ and is attained at the normalized restriction $\nu_{K}$ of $\nu$ to $K$, allowing us to translate convexity properties of the relative entropy into concavity properties of the measure $\nu$.

Since the density $\frac{\d \nu_{K}}{ \d \nu}$ is radially decreasing when $K$ is a star body, we will obtain Theorem \ref{thm:main-BM intro} as a corollary to the following result.
\begin{thm}
\label{thm:main-entropy}Let $V:\RR^{n}\to[0,\infty)$ be a $p$-homogeneous
convex function for $1<p<\infty$, and let $\nu$ be the probability
measure with density proportional to $e^{-V}$. Let $\mu_{0}$, $\mu_{1}$
be Borel probability measures, absolutely continuous with respect
to $\nu$, such that $\frac{\dd\mu_{0}}{\dd\nu}$ and $\frac{\dd\mu_{1}}{\dd\nu}$
are radially decreasing. Finally, let $\left\{ \mu_{t}\right\} _{t\in[0,1]}$
be the displacement interpolation between $\mu_{0}$ and $\mu_{1}$.
Then 
\begin{equation}
t\mapsto e^{-\frac{p-1}{pn}\DD{\mu_{t}}{\nu}}
\end{equation}
 is concave. 
\end{thm}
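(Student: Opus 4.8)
The plan is to reduce the claimed concavity to the single differential inequality
\[
D''(t)\ \ge\ \frac{p-1}{pn}\,\bigl(D'(t)\bigr)^{2},\qquad D(t):=\DD{\mu_t}{\nu},
\]
which, since $e^{-\frac{p-1}{pn}D(t)}$ is positive, is equivalent to the desired concavity. Writing $\frac{\dd\nu}{\dd x}=Z^{-1}e^{-V}$ and letting $\rho_t$ denote the Lebesgue density of $\mu_t$, the relative entropy splits as $D(t)=H(t)+\mathcal V(t)+\log Z$, where $H(t)=\int\rho_t\log\rho_t\,\dd x$ and $\mathcal V(t)=\int V\,\dd\mu_t$, and the constant drops out upon differentiation. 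I will establish the two ``dimensional'' bounds $H''\ge\frac1n(H')^{2}$ and $\mathcal V''\ge\frac{p-1}{n}(\mathcal V')^{2}$, and then combine them through the elementary inequality $\frac{a^{2}}{s}+\frac{b^{2}}{t}\ge\frac{(a+b)^{2}}{s+t}$ applied with $a=H'$, $b=\mathcal V'$, $s=n$, $t=\frac{n}{p-1}$; since $n+\frac{n}{p-1}=\frac{pn}{p-1}$, this produces exactly $D''\ge\frac{p-1}{pn}(D')^{2}$.

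For the entropy term, let $T=\nabla\varphi$ be the Brenier map from $\mu_0$ to $\mu_1$, $F_t=(1-t)\,\mathrm{id}+tT$, and $M_t=DF_t=(1-t)I+tD^{2}\varphi$, so that $\mu_t=(F_t)_{\#}\mu_0$. The Monge--Amp\`ere relation $\rho_t(F_t(x))\det M_t(x)=\rho_0(x)$ gives $H(t)=\mathrm{const}-\int\log\det M_t\,\dd\mu_0$, whence, with $B_t:=M_t^{-1}(D^{2}\varphi-I)$ (symmetric, because $M_t$ commutes with $D^{2}\varphi$),
\[
H'(t)=-\int \tr B_t\,\dd\mu_0,\qquad H''(t)=\int \tr(B_t^{2})\,\dd\mu_0 .
\]
The eigenvalue inequality $\tr(B_t^{2})\ge\frac1n(\tr B_t)^{2}$ together with Jensen's inequality yields $H''\ge\frac1n(H')^{2}$. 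This is the classical dimensional displacement convexity of Boltzmann entropy and uses no hypothesis on $V$.

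For the potential term I use that $W:=V^{1/p}$ is the gauge of the convex body $\{V\le1\}$, hence a nonnegative convex $1$-homogeneous function. From $\nabla V=pW^{p-1}\nabla W$ and $\nabla^{2}V=p(p-1)W^{p-2}\,\nabla W\otimes\nabla W+pW^{p-1}\nabla^{2}W$, together with $\nabla^{2}W\succeq0$, I obtain the pointwise Rayleigh bound
\[
\frac{\bigl(\nabla V(y)\cdot v\bigr)^{2}}{v^{\top}\nabla^{2}V(y)\,v}\ \le\ \frac{p}{p-1}\,V(y)\qquad\text{for all }y,v .
\]
Writing $\mathcal V'(t)=\int \nabla V(F_t)\cdot v\,\dd\mu_0$ and $\mathcal V''(t)=\int v^{\top}\nabla^{2}V(F_t)\,v\,\dd\mu_0$ with $v=T-\mathrm{id}$, and applying Cauchy--Schwarz with the weight $v^{\top}\nabla^{2}V(F_t)v$, the pointwise bound integrates to $(\mathcal V')^{2}\le\frac{p}{p-1}\,\mathcal V\,\mathcal V''$. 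An immediate consequence is that $t\mapsto\mathcal V(t)^{1/p}$ is convex on $[0,1]$.

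It remains to control $\mathcal V$ pointwise, and this is, I expect, the crux of the argument, being the one place where the radial monotonicity is used essentially. The homogeneity of $V$ makes the radial average of $V$ the same in every direction: in polar coordinates $\dd\nu\propto e^{-r^{p}V(\theta)}r^{n-1}\,\dd r\,\dd\theta$, and the substitution $u=r^{p}V(\theta)$ shows $\frac{\int_0^\infty V(r\theta)\,e^{-r^{p}V(\theta)}r^{n-1}\dd r}{\int_0^\infty e^{-r^{p}V(\theta)}r^{n-1}\dd r}=\frac{\Gamma(n/p+1)}{\Gamma(n/p)}=\frac np$ independently of $\theta$; in particular $\int V\,\dd\nu=\frac np$. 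Since each $h_i:=\frac{\dd\mu_i}{\dd\nu}$ is radially decreasing while $V(r\theta)=r^{p}V(\theta)$ is radially increasing, Chebyshev's integral inequality on each ray gives $\int_0^\infty V h_i\,e^{-r^{p}V(\theta)}r^{n-1}\dd r\le\frac np\int_0^\infty h_i\,e^{-r^{p}V(\theta)}r^{n-1}\dd r$; integrating over $\theta$ yields $\mathcal V(0),\mathcal V(1)\le\frac np$, and convexity of $\mathcal V^{1/p}$ then propagates the bound $\mathcal V(t)\le\frac np$ to all $t\in[0,1]$. Feeding $\mathcal V\le\frac np$ into $(\mathcal V')^{2}\le\frac{p}{p-1}\mathcal V\mathcal V''$ gives $\mathcal V''\ge\frac{p-1}{n}(\mathcal V')^{2}$, and combining with $H''\ge\frac1n(H')^{2}$ as above finishes the proof. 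The hard point is exactly the uniform bound $\mathcal V(t)\le n/p$: the pointwise curvature-dimension condition $\nabla^{2}V\succeq\frac{1}{N-n}\nabla V\otimes\nabla V$ that would make the estimate purely local fails for homogeneous $V$ away from the origin, so the monotonicity hypothesis must be exploited globally to compensate. Standard regularity and approximation arguments for the Brenier potential supply the smoothness needed in the entropy computation.
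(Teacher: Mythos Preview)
Your argument is correct and follows the same architecture as the paper: decompose $D(\cdot\Vert\nu)$ into the Boltzmann piece $H$ and the potential piece $\mathcal V$, establish the two dimensional bounds $H''\ge\frac1n(H')^{2}$ and $\mathcal V''\ge\frac{p-1}{n}(\mathcal V')^{2}$, and combine. The differences are only in packaging. Where you add the two differential inequalities via $\frac{a^{2}}{s}+\frac{b^{2}}{t}\ge\frac{(a+b)^{2}}{s+t}$, the paper writes $e^{-\frac{p-1}{pn}D}$ multiplicatively as $(e^{h/n})^{1-1/p}(e^{-\frac{p-1}{n}\mathcal V})^{1/p}$ and invokes concavity of the Cobb--Douglas map; these are equivalent formulations. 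For the endpoint bound $\mathcal V(i)\le n/p$ you use Chebyshev's correlation inequality along rays, while the paper integrates by parts in the dilation parameter $s\mapsto\int f(sx)\,\dd\nu(x)$ to obtain $\int\langle\nabla V,x\rangle\,\dd\mu\le n$; both exploit radial monotonicity in the same way. Your pointwise Rayleigh estimate via $W=V^{1/p}$ recovers exactly the paper's identity $\langle(\nabla^{2}V)^{-1}\nabla V,\nabla V\rangle=\frac{p}{p-1}V$. One minor simplification available to you: to propagate $\mathcal V\le n/p$ from the endpoints to all $t$ you only need convexity of $\mathcal V$ (immediate from $\mathcal V''\ge0$), not the stronger convexity of $\mathcal V^{1/p}$.
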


It is well known that the behavior of information-theoretic quantities under optimal transport is intimately related to the geometry of the underlying space, which is useful in the context of Brunn\textendash Minkowski type inequalities (see  for example Remark \ref{rem: cdhistory}). There are known proofs of the classical Brunn\textendash Minkowski or Prékopa\textendash Leindler
inequalities along these lines (\cite[Chapter 6]{Villani03}). However, to the best of
our knowledge this is the first time such tools are used to prove
previously unknown Brunn\textendash Minkowski type inequalities for measures on Euclidean space. 

To prove Theorem \ref{thm:main-entropy} we split the relative entropy $D(\mu \Vert \nu)$ of a measure $\mu$ into its ``internal energy'' $D(\mu \Vert \vol_{n})$ and ``potential energy'' $\int V \d \mu$ components. We observe that, under the assumptions on $\mu_{0}$ and $\mu_{1}$ as in Theorem \ref{thm:main-entropy}, $e^{- \frac{p-1}{n} \int V \d \mu_{t}}$ is concave in $t$. Theorem \ref{thm:main-entropy} is then obtained by combining our observation with a result of Erbar, Kuwada, and Sturm \cite{ErbarKuwadaSturm15} that $e^{- \frac{1}{n} D(\cdot \Vert \vol_{n})}$ is concave along all displacement interpolations. The aforementioned convexity property of the potential energy is established locally, that is, by taking two derivatives. 

\subsection{Towards the best possible exponent in the entropy inequality}

As mentioned, Theorem \ref{thm:main-BM intro} implies the Kolesnikov-Livshyts inequality \eqref{eq:gassian-BM-2n}, but not the result of Eskenazis--Moschidis with the optimal exponent $\frac{1}{n}$. Since this result is 
known to be true, one can hope for a version of Theorem \ref{thm:main-entropy} that will imply it, that is, a version with the sharp coefficient $\frac{1}{n}$, under suitable assumptions on the potential $V$ and the measures $\mu_0$ and $\mu_1$. 

One natural way to approach this problem is to perform the local analysis directly on $D(\cdot \Vert \nu)$ instead of splitting it to two components. Indeed, the concavity of $e^{-a D(\mu_{t} \Vert \nu)}$ along a displacement interpolation $\{ \mu_{t} \}_{t \in [0,1]}$ amounts to the inequality
\begin{equation}
    \frac{\d^{2}}{\d t^{2}} D(\mu_{t} \Vert \nu) \geq a \left( \frac{\d}{\d t} D(\mu_{t} \Vert \nu)  \right)^{2}.
\end{equation}
As we will see later, both sides of the above inequality can be explicitly written in terms of certain differential operators applied to the velocity field driving the optimal transport. This  leaves us with an inequality of almost the same form as the sufficiency criteria of Kolesnikov--Livshyts discussed earlier. 
In the case of $\nu = \gamma$, an auxiliary construction from \cite{EskenazisMoschidis21} allows us to prove the following theorem:
\begin{thm} \label{thm: gaussianentintro}
    Suppose $\{ \mu_{t} \}_{t \in [0,1]}$ is a displacement interpolation such that each $\mu_{t}$ is even and satisfies the odd Poinc\'are inequality with constant $1$, that is $\int f^{2} \d \mu_{t} \leq \int \vert \grad f \vert^{2} \d \mu$, for every odd test function $f$. Then,
    \begin{equation}
    \frac{\d^{2}}{\d t^{2}} D(\mu_{t} \Vert \gamma) \geq 2 \wasser^{2}(\mu_{0},\mu_{1}) + \frac{1}{n} \left( \frac{\d}{\d t} D(\mu_{t} \Vert \gamma)  \right)^{2},
    \end{equation}
    where $\wasser(\mu_{0},\mu_{1})$ denotes the $2$-Wasserstein metric (induced by the optimal transport problem).
\end{thm}
Thus, as readers familiar with the curvature-dimension framework may notice, under these assumptions on the $\mu_{t}$, not only can the ``dimensional'' improvement to $\frac{1}{n}$ be obtained, but the ``curvature'' also improves to $2$. Note that, without any restrictions on the $\mu_{t}$, $\frac{\d^{2}}{\d t^{2}} D(\mu_{t} \Vert \gamma) \geq  \wasser^{2}(\mu_{0},\mu_{1})$ is always true, which is reflective of the fact that Gauss space satisfies properties akin to a complete Riemannian manifold with Ricci curvature lower bounded by the metric tensor.

When $K$ is a convex body, then the density of $\nu_{K}$ with respect to $\nu$ is a log-concave function. This is the property of $\nu_{K}$ that typically expresses itself in extensions of various phenomena from the geometry of sets to the geometry of measures. When $\nu$ is the Gaussian measure $\gamma$, measures having a log-concave density with respect to $\gamma$ are called strongly log-concave measures. Given that the Gaussian measure satisfies a Brunn--Minkowski inequality with exponent $\frac{1}{n}$ over origin-symmetric convex bodies, it is a natural question whether displacement interpolation between two even strongly log-concave probability measures satisfies the assumptions of Theorem \ref{thm: gaussianentintro}.
\begin{qstn} \label{qstn: poincare}
    Suppose $\mu_{0}, \mu_{1}$ are even strongly log-concave probability measures on $\R^n$. Let $\{ \mu_{t} \}_{t \in [0,1]}$ denote the displacement interpolation between $\mu_{0}$ and $\mu_{1}$. Is it true that each $\mu_{t}$, $t\in (0,1)$, satisfies the Poinc\'are inequality for odd functions with constant $1$?
\end{qstn}

Note that the Brascamp--Lieb inequality (\cite[Theorem 4.1]{BrascampLieb76}) guarantees that $\mu_{0}$ and $\mu_{1}$ in the above setup satisfy a Poinc\'are inequality with constant $1$ for all functions. Also note, that a positive answer to Question \ref{qstn: poincare}, combined with our Theorem \ref{thm: gaussianentintro}, would yield an entropic proof of the Eskenazis--Moschidis result. 
We show that Question \ref{qstn: poincare} has an affirmative answer in several cases, namely when the dimension of the ambient space is one, or when both $\mu_{0}$ and $\mu_{1}$ are themselves Gaussian measures, or when one of $\{ \mu_{0}, \mu_{1} \}$ is the standard Gaussian $\gamma$. 


\subsection{Functional Inequalities}
While we cannot answer Question \ref{qstn: poincare} in full generality, the case where $\mu_0$ or $\mu_1$ is $\gamma$ itself does suffice to prove several fundamental functional inequalities.

The HWI inequality, originally introduced by Otto and Villani in \cite{OttoVillani00}, asserts a relationship between the three elemental quantities--- $D(\cdot \Vert \gamma), \wasser(\cdot, \gamma)$, and $I(\cdot \Vert \gamma)$. The ``H'' in HWI stands for relative entropy (as it is sometimes denoted), ``W'' is for the Wasserstein distance, and ``I'' for Fisher information. Recall that, if $P^{\ast}_{s} \mu$ denotes the evolution of the measure $\mu$ under the Ornstein--Uhlenbeck process at time $s$, then the Fisher information is $I(\mu \Vert \gamma) = - \left. \frac{\d}{\d s} D(P^{\ast}_{s} \mu \Vert \gamma) \right|_{s=0}$. Using the methods from \cite{ErbarKuwadaSturm15}, we obtain the following strengthening of the Gaussian HWI inequality for even strongly log-concave measures as a consequence of our results. 
\begin{thm}[An HWI inequality] \label{thm: HWIintro}
    Let $\mu$ be an even strongly log-concave probability measure on $\R^n$. Suppose $\{ \mu_{0}, \mu_{1} \} = \{ \mu , \gamma \}$. Then,
        \begin{equation} \label{eq: HWIintro}
        e^{\frac{D(\mu_{0} \Vert \gamma)}{n} - \frac{D(\mu_{1} \Vert \gamma)}{n}} \leq \cos \left( \sqrt{\frac{2}{n}}\wasser(\mu_{0}, \mu_{1}) \right) + \frac{1}{\sqrt{2n}} \sin \left(  \sqrt{\frac{2}{n}} \wasser(\mu_{0}, \mu_{1}) \right) \sqrt{I(\mu_{0} \Vert \gamma)}.
        \end{equation}
\end{thm}

The original HWI inequality of Otto--Villani (\cite[Theorem 3]{OttoVillani00}) for the Gaussian measure, which is valid without symmetry or log-concavity assumptions on $\mu$, states that
\begin{equation} \label{eq: originalHWI}
D(\mu_{0} \Vert \gamma) - D(\mu_{1} \Vert \gamma) \leq \wasser (\mu_{0}, \mu_{1}) \sqrt{I(\mu_{0} \Vert \gamma)} - \frac{1}{2} \wasser^{2} (\mu_{0}, \mu_{1}).
\end{equation}
To compare our inequality \eqref{eq: HWIintro} with the original HWI inequality \eqref{eq: originalHWI}, the reader may raise both sides of \eqref{eq: HWIintro} to the power $n$ and send $n \to \infty$. A key feature of HWI type inequalities, perhaps even the motivation behind it in \cite{OttoVillani00}, is that they unify two other important families of inequalities. At one end, by setting $\mu_{0} = \mu$ and $\mu_{1} = \gamma$ in Theorem \ref{thm: HWIintro}, after some elementary manipulations, we get a refinement of the Gaussian Logarithmic Sobolev inequality. Although first popularized in the mathematical community through a contribution of Gross (\cite{Gross75}), the Gaussian Logarithmic Sobolev inequality had previously appeared in the work of Federbush (\cite{Federbush69}) and even earlier as an information-theoretic uncertainty principle by Stam (\cite{Stam59}). 
 \begin{cor}[A Logarithmic Sobolev inequality with an application] 
     Let $\mu$ be an even strongly log-concave probability measure on $\R^n$. Then, 
     \begin{equation}
4 D(\mu \Vert \gamma) \leq 2n \left( e^{\frac{2}{n}D(\mu \Vert \gamma)} - 1 \right) \leq I( \mu \Vert \gamma).
       \end{equation}
       Therefore, 
       \begin{equation}
 D(P^{\ast}_{t} \mu \Vert \gamma ) \leq e^{-4t} D(\mu_{0} \Vert \gamma).
       \end{equation}
 \end{cor}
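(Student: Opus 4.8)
The plan is to read both chained inequalities off results already available and then integrate the resulting differential inequality along the Ornstein--Uhlenbeck flow. The left-hand inequality $4\DD{\mu}{\gamma} \le 2n\left(e^{2\DD{\mu}{\gamma}/n}-1\right)$ is purely elementary: setting $x = \tfrac{2}{n}\DD{\mu}{\gamma} \ge 0$ and using $e^{x} \ge 1+x$ gives $2n(e^{x}-1) \ge 2nx = 4\DD{\mu}{\gamma}$. For the middle inequality $2n\left(e^{2\DD{\mu}{\gamma}/n}-1\right) \le I(\mu\Vert\gamma)$, I would specialize the HWI inequality of Theorem \ref{thm: HWIintro} to $\mu_{0}=\mu$ and $\mu_{1}=\gamma$. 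Since $\DD{\gamma}{\gamma}=0$, writing $\theta = \sqrt{2/n}\,\wasser(\mu,\gamma)$ and $b = \sqrt{I(\mu\Vert\gamma)/(2n)}$, the HWI inequality collapses to
\[
e^{\DD{\mu}{\gamma}/n} \le \cos\theta + b\sin\theta .
\]

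The key observation is that the Wasserstein term can simply be discarded by the uniform bound $\cos\theta + b\sin\theta \le \sqrt{1+b^{2}}$, valid for every real $\theta$ and every $b \ge 0$ (rewrite the left-hand side as $\sqrt{1+b^{2}}\cos(\theta-\phi)$ for a suitable phase $\phi$). Thus $e^{\DD{\mu}{\gamma}/n} \le \sqrt{1+b^{2}}$; squaring (both sides are positive) and substituting $b^{2}=I(\mu\Vert\gamma)/(2n)$ gives $e^{2\DD{\mu}{\gamma}/n} \le 1 + \tfrac{1}{2n}I(\mu\Vert\gamma)$, which rearranges to the desired inequality. Chaining the two bounds yields $4\DD{\mu}{\gamma} \le I(\mu\Vert\gamma)$ for \emph{every} even strongly log-concave probability measure $\mu$.

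Finally I would run the heat flow, applying this last inequality not only to $\mu$ but to each $P^{\ast}_{s}\mu$. This forces me to check that the class of even strongly log-concave measures is invariant under the Ornstein--Uhlenbeck semigroup, and this is where I expect the only real content to sit, so I regard it as the main obstacle. Evenness is preserved because $\gamma$ is even; strong log-concavity (that is, log-concavity of the density relative to $\gamma$) should be preserved through the Mehler representation of $P^{\ast}_{s}$, under which the relative density of $P^{\ast}_{s}\mu$ is an integral of a jointly log-concave function and hence stays log-concave by Prékopa's theorem. Granting this invariance, I set $D(s) = \DD{P^{\ast}_{s}\mu}{\gamma}$ and use the defining identity $I(P^{\ast}_{s}\mu\Vert\gamma) = -\tfrac{\d}{\d s}D(s)$ together with $4D(s) \le I(P^{\ast}_{s}\mu\Vert\gamma)$ to obtain
\[
\frac{\d}{\d s}D(s) = -\,I(P^{\ast}_{s}\mu\Vert\gamma) \le -4\,D(s).
\]
Grönwall's lemma then gives $D(t) \le e^{-4t}D(0)$, which is precisely the asserted bound $\DD{P^{\ast}_{t}\mu}{\gamma} \le e^{-4t}\DD{\mu}{\gamma}$ (the $\mu_{0}$ appearing in the statement being $\mu$).
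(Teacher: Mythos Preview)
Your proof is correct and follows essentially the same route as the paper. The only cosmetic difference is that where the paper squares the HWI inequality and then applies $2ab \le 2a^{2}+\tfrac12 b^{2}$ together with the identity $\c^{2}+\tfrac{2}{n}\s^{2}=1$, you use the equivalent one-line bound $\cos\theta + b\sin\theta \le \sqrt{1+b^{2}}$; both yield $e^{2D/n}\le 1+\tfrac{1}{2n}I$, and the decay estimate is handled identically via $\tfrac{\d}{\d s}D(s)=-I$ and preservation of the even strongly log-concave class under the Ornstein--Uhlenbeck semigroup.
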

On the other hand, with $\mu_{0} = \gamma$ and $\mu_{1} = \mu$, the transportation cost-information inequality of Talagrand (\cite{Talagrand96}) is improved for even strongly log-concave probability measures.
\begin{cor}[A Talagrand inequality] 
    Let $\mu$ be an even strongly log-concave probability measure on $\R^n$. Then,
        \begin{equation}
\wasser^{2} (\mu , \gamma) \leq - n \log \cos \left( \sqrt{\frac{2}{n}} \wasser(\mu, \gamma) \right) \leq D(\mu \Vert \gamma).
        \end{equation}
\end{cor}
The original inequalities assert $2D (\mu \Vert \gamma) \leq I(\mu \Vert \gamma)$ and $\frac{1}{2} \wasser^{2}(\mu, \gamma) \leq D(\mu \Vert \gamma)$, respectively, for all probability measures $\mu$. Thus, even if we completely ignore the dimensional improvements that we have obtained, both the Gaussian Logarithmic Sobolev and the Talagrand inequalities show an improvement by a multiplicative factor of $2$ when restricted to even strongly log-concave measures. From examples presented in the main body of the text (see Examples \ref{exmpl: betterlsineedsslc} and \ref{exmpl: bettertalagrandneedsslc}), the reader will see that both assumptions--- central symmetry and strong log-concavity of the measure, are essential for this improvement. Dimensional improvements in Gaussian functional inequalities have been pursued before, for example, in the works of Bakry and Ledoux (\cite{BakryLedoux06}), Bobkov, Gozlan, Roberto, and Samson (\cite{BobkovGozlanRobertoSamson14}), and more recently, Bolley, Gentil, and Guillin (\cite{BolleyGentilGuillin18}). However, none of these results gives the multiplicative improvement we get by restricting to even strongly log-concave measures. Readers interested in dimension-dependent functional inequalities may also consult \cite{BakryBolleyGentil12, BakryBolleyGentil17, EskenazisShenfeld24, Shenfeld24}. 

The rest of this paper is organized as follows. In Section \ref{sec: overview} we
give the necessary background on optimal transport and describe
the previous works on displacement concavity of entropy. In Section
\ref{sec: dimconhomo} we give the relatively short proofs of Theorems \ref{thm:main-BM intro} and \ref{thm:main-entropy}.

Then, in Section \ref{sec: gaussian} we restrict our attention to the case $\nu=\gamma$, to discuss potential improvements to convexity properties of $D(\cdot \Vert \gamma)$ beyond those established in Section \ref{sec: dimconhomo}. In particular we prove Theorem \ref{thm: gaussianentintro}. Finally, in Section \ref{sec: gaussianapp} we apply our findings from the previous section to prove functional inequalities such as Theorem \ref{thm: HWIintro} and its corollaries. 

\subsection*{Some notation and conventions:}
\begin{itemize}
\item The standard Euclidean norm on $\R^n$ is denoted by $\vert \cdot \vert$ .
    \item Let $K \subseteq \R^n$ be a measurable subset. We denote by $\mathcal{P}(K)$ the space of all Borel probability measures on $K$, $\mathcal{P}_{2}(K)$ all Borel probability measures on $K$ with finite second moments, and $\mathcal{P}_{ac}(K)$ the space of Borel probability measures on $K$ absolutely continuous with respect to the Lebesgue measure $\vol_{n}$ on $\R^n$. Further, we set $\mathcal{P}_{2,ac}(K) = \mathcal{P}_{2}(K) \cap \mathcal{P}_{ac}(K)$.
    \item We will sometimes write $\int f$ for $\int f(x) \d x$. Lebesgue measure is the only case when the measure may not be specified in the integral.
    \item Let $\nu$ be a Borel measure on $\R^n$. For every Borel set $K$ such that $\nu(K) >0$, we define the probability measure $\nu_{K}$ by setting $\nu_{K}(E) = \frac{\nu(E \cap K)}{\nu(K)}$, for every $E$. 
    \item An \emph{even function} is a function $f$ satisfying $f(x) = f(-x)$ for all $x$. An \emph{odd function} is a function satisfying $f(-x) = - f(x)$ for all $x$. An \emph{even measure} is a measure $\mu$ on $\R^n$ satisfying $\mu(E) = \mu (-E)$ for all measurable sets $E$. 
\end{itemize}

\paragraph{\textbf{Acknowledgments:}} We would like to thank Emanuel Milman for providing some useful references regarding the Logarithmic Sobolev inequality. We would also like to thank the anonymous referees for their very useful suggestions and corrections. The authors were supported by ISF grant 1468/19 and NSF-BSF grant 2022707.
\paragraph{\textbf{Declarations of interest:}} none.

\section{Optimal transport, entropy, and an overview of our method} \label{sec: overview}
While the general problem of optimal transport can be posed in a much more abstract setting, we will focus on the setup relevant for our purpose, namely the optimal transport problem in $\R^n$ with quadratic cost. For details beyond the short treatment we present below, the reader is referred to the excellent textbook \cite{Villani03}, especially Chapters 2 and 5. 

For $\mu_{0}, \mu_{1} \in \mathcal{P}_{2}(\R^n)$, denote by $\Pi (\mu_{0}, \mu_{1})$ the collection of all couplings between $\mu_{0}$ and $\mu_{1}$. This means, $\Pi (\mu_{0}, \mu_{1})$ consists of probability measures $\pi \in \mathcal{P}(\R^n \times \R^n)$ such that,
\begin{equation}
\pi (A \times \R^n) = \mu_{0}(A), \, \pi (\R^n \times B) = \mu_{1}(B),
\end{equation}
for all Borel sets $A,B \subseteq \R^n$. If $X_{0},X_{1}$ are $\R^n$-valued random vectors with distributions $\mu_{0}, \mu_{1}$, respectively, then $\Pi (\mu_{0}, \mu_{1})$ is the collection of all possible joint distributions for $(X_{0}, X_{1})$. 

The optimal transport problem, with quadratic cost, asks to minimize
\begin{equation}
I[\pi] = \int  \vert x - y \vert^{2} \d \pi (x,y),
\end{equation}
over all $\pi \in \Pi(\mu_{0}, \mu_{1})$. This models the situation where mass distributed according to $\mu_{0}$ in space is to be displaced to mass distributed according to $\mu_{1}$, where $\vert x-y \vert^2$ is the cost of moving a unit mass from the point $x$ to the point $y$. A coupling $\pi$ represents a plan, where the infinitesimal $\d \pi (x,y)$ is the amount of mass to be moved from $x$ to $y$. The problem then asks to compute the cheapest way to displace $\mu_{0}$ to $\mu_{1}$.

The optimal cost, 
\begin{equation}
\wasser(\mu_{0}, \mu_{1}) = \left( \inf_{\pi \in \Pi (\mu_{0}, \mu_{1})} \int  \vert x - y \vert^{2} \d \pi (x,y) \right)^{1/2},
\end{equation}
called the \emph{Wasserstein distance of order $2$} between $\mu_{0}$ and $\mu_{1}$, is in fact a metric on $\mathcal{P}_{2} (\R^n)$. The resulting metric space is called the \emph{$2$-Wasserstein space over $\R^n$}, denoted $\wasser (\R^n)$. The Wasserstein space over $\R^n$ is a geodesic space, that is, every two points can be connected by a continuous path $c:[0,1] \to \wasser (\R^n)$ such that, $\wasser(c(s),c(t)) = \vert t-s \vert \wasser (c(0),c(1))$ for any intermediate times $s,t \in [0,1]$. Every optimal coupling $\pi \in \Pi (\mu_{0}, \mu_{1})$ gives rise to a geodesic via $c_{\pi} (t) = (\sigma_{t})_{\#} \pi$, where $\sigma_{t}: (x,y) \mapsto (1-t)x + t y$. Notions related to geodesic-convexity in $\wasser (\R^n)$ are given special names, as in the definition below.

\begin{definition}
   \begin{enumerate}
   \item[]
       \item A subset $S \subseteq \wasser (\R^n)$ is called a \emph{displacement convex set}, if given any two measures $\mu_{0}, \mu_{1} \in S$, 
       \begin{equation}
\Big\{ (\sigma_{t})_{\#}\pi \, : \, \wasser^{2} (\mu_{0}, \mu_{1}) = \int \vert x-y \vert^{2} \d \pi (x,y), t \in [0,1] \Big\} \subseteq S.
       \end{equation}
       \item A map $F: S \to \R \cup \{ + \infty \}$ is called a \emph{displacement convex functional} if the map $t \mapsto F ((\sigma_{t})_{\#}\pi )$ is convex on $[0,1]$, for every $\pi$ such that $ \wasser^2 (\mu_{0}, \mu_{1}) = \int \vert x-y \vert^{2} \d \pi (x,y)$. Similarly, $F$ is called \emph{displacement concave} if  $-F$ is a displacement convex functional.
       \item If there is a unique constant-speed geodesic $\{ \mu_{t} \}_{t \in [0,1]}$ from $\mu_{0}$ to $\mu_{1}$, then it is called the \emph{displacement interpolation} between $\mu_{0}$ and $\mu_{1}$, and each $\mu_{t}$ is called a \emph{displacement interpolant}.
   \end{enumerate}
\end{definition}

The geodesically convex subset $\mathcal{P}_{2,ac}(\R^n) \subseteq \wasser(\R^n)$ is uniquely geodesic. This is facilitated by a theorem of Brenier (\cite{Brenier91}), which ensures a unique solution to optimal transport problem if $\mu_{0}$ has density, and describes the structure of the optimal coupling. 
\begin{thm}\cite[Theorem 2.12 (ii)]{Villani03} \label{thm: Brenier}
    Let $\mu_{0}, \mu_{1} \in \mathcal{P}_{2}(\R^n)$. If $\mu_{0} \in \mathcal{P}_{2,ac} (\R^n)$ in addition, then there exists a unique convex function $\phi$ such that, $T = \grad \phi$ pushes forward $\mu_{0}$ to $\mu_{1}$. Moreover, in this case the optimal coupling in the optimal transport problem is unique, and is given by $\pi = \left[ x \mapsto (x, T(x)) \right]_{\#} \mu_{0}$. 
\end{thm}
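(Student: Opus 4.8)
The plan is to prove Brenier's theorem in the standard way, obtaining the transport map from the structure of optimal plans via cyclical monotonicity and Rockafellar's theorem. First I would reduce the problem to maximizing correlation: expanding $|x-y|^2 = |x|^2 - 2\langle x,y\rangle + |y|^2$ and using that both marginals are fixed, the terms $\int |x|^2 \d\pi = \int |x|^2 \d\mu_0$ and $\int |y|^2 \d\pi = \int |y|^2 \d\mu_1$ are finite constants (finiteness is where $\mu_0,\mu_1 \in \mathcal{P}_2(\R^n)$ is used), independent of $\pi$. Hence minimizing $\int |x-y|^2 \d\pi$ over $\pi \in \Pi(\mu_0,\mu_1)$ is equivalent to maximizing $\int \langle x,y\rangle \d\pi$. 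I would then establish existence of a minimizer: $\Pi(\mu_0,\mu_1)$ is tight (its marginals being fixed and tight) and therefore weakly sequentially compact by Prokhorov's theorem, while $\pi \mapsto \int |x-y|^2 \d\pi$ is weakly lower semicontinuous as the integral of a nonnegative continuous cost; a minimizing sequence thus has a weak subsequential limit which is optimal.

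The core structural step is to show that the support of any optimal plan $\pi$ is cyclically monotone, i.e. for any $(x_1,y_1),\dots,(x_m,y_m) \in \supp(\pi)$ and any permutation $\sigma$ one has $\sum_i \langle x_i, y_i\rangle \geq \sum_i \langle x_i, y_{\sigma(i)}\rangle$. I would prove this by contradiction: if the inequality failed for some finite configuration, a local perturbation of $\pi$ that reshuffles a small amount of mass along the cycle would strictly increase the correlation, contradicting optimality. By Rockafellar's theorem a cyclically monotone set is contained in the subdifferential graph $\{(x,y) : y \in \partial\phi(x)\}$ of a proper lower semicontinuous convex function $\phi$ (reconstructed as a supremum of affine functions built along chains through the configuration). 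Thus $\pi$ is concentrated on such a graph.

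Finally I would invoke the absolute continuity of $\mu_0$. A finite convex function is differentiable Lebesgue-almost everywhere, and at a point of differentiability the subdifferential is the single point $\{\grad\phi(x)\}$; since $\mu_0 \ll \vol_n$, the set of non-differentiability points is $\mu_0$-null. Hence $\pi$ is concentrated on the graph of $T = \grad\phi$, which yields $\pi = [x \mapsto (x,T(x))]_\# \mu_0$ and, by reading off the second marginal, $T_\#\mu_0 = \mu_1$. For uniqueness: if $T_1 = \grad\phi_1$ and $T_2 = \grad\phi_2$ were two optimal maps, the averaged plan $\tfrac{1}{2}(\mathrm{id},T_1)_\#\mu_0 + \tfrac{1}{2}(\mathrm{id},T_2)_\#\mu_0$ is again a coupling of $\mu_0$ and $\mu_1$ with the optimal cost, hence optimal, hence supported in a single subdifferential graph $\partial\psi$; at $\mu_0$-a.e. $x$ this graph is single-valued, forcing $T_1(x) = T_2(x)$. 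This shows the genuinely unique object is the map $T = \grad\phi$, with $\phi$ itself determined only up to an additive constant.

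I expect the main obstacle to be the rigorous justification of the perturbation argument establishing cyclical monotonicity: making ``reshuffle mass along a cycle'' precise requires choosing small product neighborhoods of the chosen support points, constructing a compatible signed perturbation measure, and verifying that the perturbed plan remains a nonnegative coupling with unchanged marginals while strictly improving the correlation. The passage from cyclical monotonicity to a convex potential (Rockafellar) is a clean but nontrivial self-contained lemma in convex analysis. Since the statement is quoted verbatim from \cite{Villani03}, in the write-up I would cite these classical facts rather than reprove them in full.
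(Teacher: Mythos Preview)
The paper does not prove this statement at all: it is quoted as background from \cite[Theorem 2.12 (ii)]{Villani03} and used without proof. Your outline is a correct sketch of the standard proof (reduction to correlation maximization, existence via Prokhorov, cyclical monotonicity of optimal supports, Rockafellar's theorem, and a.e.\ differentiability of convex functions), and you rightly note at the end that one would simply cite the result rather than reprove it---which is exactly what the paper does.
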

The map $T$ in the above theorem is called the \emph{Brenier map from $\mu_{0}$ to $\mu_{1}$}. If the conditions of the above theorem are met, then we can explicitly write the displacement interpolant between $\mu_{0}$ and $\mu_{1}$ as $\mu_{t} = (T_{t})_{\#} \mu_{0}$, where $T_{t} = (1-t) I + t T$ linearly interpolates between the identity map and the Brenier map from $\mu_{0}$ to $\mu_{1}$. This interpolation was introduced in the pioneering work of McCann (\cite{McCann97}), where the fact that the Jacobian matrix of the \emph{transport maps} $T_{t}$ takes values in the space of positive semidefinite matrices proved to be useful in establishing displacement convexity properties of some important functionals. For example, \cite[Theorem 2.2]{McCann97} contains the following functionals as special cases. 
\begin{exmpl} \label{exmpl: sbentropy}
The \emph{Shannon--Boltzmann entropy} $h(\mu) = - \int \left( \frac{\d \mu}{\d x} \right) \log \left( \frac{\d \mu}{\d x} \right) \d x$ on $\mathcal{P}_{2,ac}(\R^n)$ is displacement concave. In fact, this functional can be meaningfully extended to all of $\wasser (\R^n)$ by setting $h(\mu) = - \infty$, when $\mu$ does not have density with respect to the Lebesgue measure.
\end{exmpl}
\begin{exmpl} \label{exmpl: renyientropy}
    The \emph{exponentiated R\'enyi entropy} $A(\mu) = e^{\frac{1}{n} h_{(n-1)/n}(\mu)} := \int \left( \frac{\d \mu}{\d x} \right)^{1 - \frac{1}{n}} \d x$ is displacement concave.
\end{exmpl}
Much later, in a considerably broader framework than we need here, Example \ref{exmpl: sbentropy} was strengthened by Erbar, Kuwada, and Sturm (\cite{ErbarKuwadaSturm15}) in the form below. 
\begin{exmpl} \label{exmpl: expent}
    The exponentiated entropy $e^{h(\mu)/n}$ is displacement concave on $\wasser(\R^n)$. 
\end{exmpl}

Since $\supp (\mu_{t}) \subseteq (1-t) \supp (\mu_{0}) + t \supp(\mu_{1})$, displacement concavity can be used to obtain sumset inequalities for geometric quantities that admit a variational interpretation in terms of probability measures. For example, if $K \subseteq \R^n$ is a compact set, then 
\begin{equation}
\sup_{\mu \in \mathcal{P}_{ac}(K)} h(\mu) = \log \vol_{n} (K), \, \sup_{\mu \in \mathcal{P}_{ac}(K)} A(\mu) = \vol_{n}(K)^{1/n}.
\end{equation}
The upper-bounds implicit in both these formulas can be proved by a straightforward application of Jensen's inequality, then one can check that these upper-bounds are attained by the uniform distribution on $K$, leading to the variational expressions above. Suppose $K_{0}$ and $K_{1}$ are compact subsets of $\R^n$. Then, Example \ref{exmpl: sbentropy} implies the $0$-concavity of the Lebesgue measure, while Example \ref{exmpl: renyientropy} and Example \ref{exmpl: expent} imply the (a priori superior) $(1/n)$-concavity of the Lebesgue measure. 

In this paper, we appropriately generalize Example \ref{exmpl: expent}. To lift the framework above to deal with an arbitrary reference measure instead of the Lebesgue measure, we now introduce a way to measure entropy with respect to a reference measure. 
\begin{definition}[Relative entropy]
    Let $\nu$ be a $\sigma$-additive Borel measure on $\R^n$. We define the following functional on $\wasser(\R^n)$, 
    \begin{equation}
D(\mu \Vert \nu) =
\begin{cases}
\int \left( \frac{\d \mu}{\d \nu} \right) \log \left( \frac{\d \mu}{\d \nu} \right) \d \nu, & \textnormal{ if } \mu \textnormal{ has density w.r.t. } \nu , \\
+ \infty, & \textnormal{ otherwise, }  \\
\end{cases}
\end{equation}
called the \emph{relative entropy of $\mu$ with respect to $\nu$}.
\end{definition}
\begin{rem}
 The relative entropy $D(\mu \Vert \nu)$ is a quantification of how much $\mu$ is ``spread out'' from the viewpoint of $\nu$. In $\R^n$, we get an absolute measure of spread by looking at negative the amount $\mu$ is spread out from the most spread out measure $\vol_{n}$, $h( \mu) = - D (\mu \Vert \vol_{n})$. 
\end{rem}
Using $D(\cdot \Vert \nu)$, one can obtain a variational formulation of the $\nu$-measure, exactly as in the case of the Shannon--Boltzmann entropy and volume. 
\begin{lem} \label{lem: numeasureinent}
    Let $K \subseteq \R^n$ be a compact set, then 
    \begin{equation}
        \inf_{\mu \in \mathcal{P}(K)} D (\mu \Vert \nu) = - \log \nu (K),
    \end{equation}
    where the supremum is attained by $\nu_{K}$. 
\end{lem}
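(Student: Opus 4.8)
The plan is to reduce the whole statement to a single application of Jensen's inequality for the strictly convex function $\phi(t) = t\log t$ (extended by $\phi(0)=0$). First I would dispose of the degenerate cases: the infimum ranges over all $\mu \in \mathcal{P}(K)$, but whenever $\mu$ fails to be absolutely continuous with respect to $\nu$ we have $D(\mu \Vert \nu) = +\infty$ by definition, so such $\mu$ never compete for the infimum, as long as $0<\nu(K)<\infty$ so that the claimed value $-\log\nu(K)$ is finite. Hence it suffices to consider $\mu \ll \nu$ with $\mu(K)=1$; writing $f = \frac{\d\mu}{\d\nu}$, this density vanishes $\nu$-a.e. outside $K$, is nonnegative, and satisfies $\int_K f \, \d\nu = 1$.

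Set $m = \nu(K)$ and recall the probability measure $\nu_{K} = \frac{1}{m}\,\nu\restrict K$. Then I would rewrite
\[
D(\mu \Vert \nu) = \int_{K} \phi(f)\, \d\nu = m \int_{K} \phi(f)\, \d\nu_{K},
\]
and apply Jensen's inequality to $\phi$ against the probability measure $\nu_{K}$. The barycenter is $\int_{K} f \, \d\nu_{K} = \frac{1}{m}\int_{K} f\,\d\nu = \frac{1}{m}$, so Jensen gives $\int_{K} \phi(f)\,\d\nu_{K} \ge \phi(1/m) = -\frac{1}{m}\log m$, and therefore $D(\mu\Vert\nu)\ge -\log m = -\log\nu(K)$ for every admissible $\mu$. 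For the matching upper bound and the attainment claim, I would simply substitute $f \equiv \frac{1}{m}\mathbf{1}_{K}$, which is exactly $\frac{\d\nu_{K}}{\d\nu}$, and compute directly $D(\nu_{K}\Vert\nu) = \int_{K} \frac1m \log\frac1m\,\d\nu = -\log m$. Moreover, since $\phi$ is strictly convex on $(0,\infty)$, equality in Jensen forces $f$ to be constant $\nu_{K}$-a.e., i.e. $f = 1/m$ on $K$, so $\nu_{K}$ is in fact the unique minimizer.

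I do not anticipate a genuine obstacle here; the only point requiring a line of care is the well-definedness of the integrals, which is handled by noting that $\phi$ is bounded below (by $-1/e$), so $\int_{K}\phi(f)\,\d\nu_{K}$ is a well-defined element of $(-\infty,+\infty]$ and Jensen applies with no extra integrability hypotheses. The sole standing assumption implicitly needed is $0<\nu(K)<\infty$, which holds in all the applications of interest, since $\nu$ is a probability measure and $K$ is compact. (I also note that the lemma statement's reference to ``the supremum'' attaining the value is a slip for the infimum, which is what the argument produces.)
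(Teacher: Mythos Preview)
Your argument is correct and is essentially the same as the paper's: both reduce the lower bound to a single application of Jensen's inequality and then check attainment at $\nu_K$ by direct substitution. The only cosmetic difference is the direction in which Jensen is applied --- you use the convex function $\phi(t)=t\log t$ integrated against the probability measure $\nu_K$, whereas the paper uses the concave function $\log$ applied to $1/f(X)$ with $X\sim\mu$; your version handles the possibility $f=0$ on part of $K$ slightly more cleanly, but otherwise the two are interchangeable.
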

\begin{proof}
    Let $X$ be an $\R^n$-valued random vector with distribution $\mu \in \mathcal{P}(K)$. Because of the way relative entropy is defined, we only need to consider measures $\mu$ having density, say $f$, with respect to $\nu$. Using Jensen's inequality we obtain,
    \begin{equation}
    - D(\mu \Vert \nu) = - \int f \log f \d \nu = \mathbb{E} \log \frac{1}{f(X)} \leq \log \mathbb{E} \frac{1}{f(X)} \leq \log \nu (K). 
    \end{equation}
    The bound is attained when $f$ is constant, that is, when $\mu = \nu_{K}$. This can be verified directly.  
\end{proof}

Therefore, we have the following general principle formalizing our method. 
\begin{prop} \label{pro: wand}
    Let $\nu$ be a $\sigma$-finite Borel measure on $\R^n$. Suppose $S \subseteq \wasser(\R^n)$ is a displacement convex set containing $\nu_{K}$, for all compact sets in a class $\mathcal{K}$ of compact subsets of $\R^n$. If $e^{- a D(\cdot \Vert \nu)}$ is displacement concave on $S$, $a>0$, then 
    \begin{equation} \label{eq: wand}
    \nu ((1-t) K_{0} + t K_{1})^{a} \geq (1-t) \nu (K_{0})^{a} + t \nu (K_{1})^{a},
    \end{equation}
    for all $K_{0}, K_{1} \in \mathcal{K}$.
\end{prop}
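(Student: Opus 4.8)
The plan is to run the variational principle of Lemma \ref{lem: numeasureinent} at both ends of a single geodesic and let displacement concavity do the rest. Fix $K_0, K_1 \in \mathcal{K}$ and set $\mu_0 = \nu_{K_0}$, $\mu_1 = \nu_{K_1}$; these lie in $S$ by hypothesis, and since $K_0,K_1$ are compact (hence bounded) they have finite second moments, so they are genuine points of $\wasser(\R^n)$. I would choose any optimal coupling $\pi \in \Pi(\mu_0,\mu_1)$ and form the constant-speed geodesic $\mu_t = (\sigma_t)_{\#}\pi$. Because $S$ is a displacement convex set, every $\mu_t$ remains in $S$, so the displacement concavity of $e^{-aD(\cdot\Vert\nu)}$ on $S$ applies and yields
\[
e^{-a D(\mu_t \Vert \nu)} \;\geq\; (1-t)\, e^{-a D(\mu_0 \Vert \nu)} + t\, e^{-a D(\mu_1 \Vert \nu)}.
\]

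Next I would pin down the two endpoints. Since $\mu_0 = \nu_{K_0}$ attains the infimum in Lemma \ref{lem: numeasureinent}, we have $D(\mu_0 \Vert \nu) = -\log \nu(K_0)$, whence $e^{-a D(\mu_0 \Vert \nu)} = \nu(K_0)^a$, and likewise $e^{-a D(\mu_1 \Vert \nu)} = \nu(K_1)^a$. Thus the right-hand side above is \emph{exactly} $(1-t)\nu(K_0)^a + t\,\nu(K_1)^a$, the quantity we wish to bound from below.

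It remains to bound the left-hand side from above by $\nu\big((1-t)K_0 + t K_1\big)^a$, and this is where the geometry enters. The coupling $\pi$ is concentrated on $\supp(\mu_0)\times\supp(\mu_1) \subseteq K_0\times K_1$ (the sets being closed), so $\mu_t = (\sigma_t)_{\#}\pi$ is concentrated on $\sigma_t(K_0\times K_1) = (1-t)K_0 + t K_1 =: K_t$, which is again compact. Hence $\mu_t \in \mathcal{P}(K_t)$, and the lower-bound content of Lemma \ref{lem: numeasureinent} gives $D(\mu_t \Vert \nu) \geq -\log \nu(K_t)$, i.e. (using $a>0$) $e^{-a D(\mu_t \Vert \nu)} \leq \nu(K_t)^a$. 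Chaining this with the displacement-concavity inequality produces
\[
\nu(K_t)^a \;\geq\; e^{-a D(\mu_t \Vert \nu)} \;\geq\; (1-t)\nu(K_0)^a + t\,\nu(K_1)^a,
\]
which is precisely (\ref{eq: wand}).

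I do not anticipate a serious obstacle: the argument is a clean pincer in which the \emph{lower} bound on entropy from the variational formula (applied to the interpolant $\mu_t$ on $K_t$) becomes an \emph{upper} bound on $e^{-aD(\mu_t\Vert\nu)}$, while the endpoints are evaluated exactly because $\nu_{K_0},\nu_{K_1}$ are the minimizers. The only points requiring care are bookkeeping: that $\nu_{K_0},\nu_{K_1}$ are well defined and belong to $S$ (so implicitly $\nu(K_0),\nu(K_1)>0$), that the support inclusion $\supp(\mu_t)\subseteq K_t$ holds for the chosen $\pi$ (standard, since the marginals of $\pi$ live on $K_0$ and $K_1$), and that the degenerate case $\nu(K_t)=0$ does not arise — indeed the displacement-concavity step already forces $e^{-aD(\mu_t\Vert\nu)}>0$ and hence $\nu(K_t)>0$.
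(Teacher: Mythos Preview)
Your proposal is correct and follows essentially the same argument as the paper: plug $\mu_0=\nu_{K_0}$, $\mu_1=\nu_{K_1}$ into the displacement-concavity inequality, evaluate the endpoints exactly via Lemma~\ref{lem: numeasureinent}, and bound the interpolant using $\supp(\mu_t)\subseteq (1-t)K_0+tK_1$. You are simply more explicit about the bookkeeping (finite second moments, well-definedness, the degenerate case), but the mathematical content is identical.
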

\begin{proof}
    In the assumed displacement concavity inequality
    \begin{equation}
e^{-a D(\mu_{t} \Vert \nu )} \geq (1-t) e^{-a D(\mu_{0} \Vert \nu )} + e^{-a D(\mu_{1} \Vert \nu )},
    \end{equation}
    we plug in $\mu_{0} = \nu_{K_{0}}, \mu_{1} = \nu_{K_{1}}$. Since $\supp(\mu_{t}) \subseteq (1-t)K_{0} + tK_{1}$, we have $e^{-a D(\mu_{t} \Vert \nu )} \leq \nu((1-t)K_{0} + tK_{1})^{a}$ which concludes the proof. 
\end{proof}
\begin{rem}
    When $e^{-a D(\cdot \Vert \nu)}$, $a>0$, is displacement concave on all of $\wasser (\R^n)$, we get the $a$-concavity of the measure $\nu$. In this case, the converse is also true thereby extending Borell's characterization (Theorem \ref{thm: Borell}). This is contained, rather implicitly, in the work of Erbar, Kuwada, and Sturm (\cite{ErbarKuwadaSturm15}). The corresponding statement for $a=0$, namely the equivalence between displacement convexity of $D(\cdot \Vert \nu)$ and log-concavity of $\nu$, had been known earlier (see, for example \cite[Theorem 9.4.11]{AmbrosioGigliSavare08}).
\end{rem}
\begin{rem} \label{rem: cdhistory}
The displacement concavity property of $e^{-a D(\cdot \Vert \nu)}$ on $\wasser (\R^n)$, in the language of \cite{ErbarKuwadaSturm15}, is called the \emph{entropic $CD(0,1/a)$ condition} for the metric measure space $(\R^n, \Vert \cdot \Vert_{2}, \nu)$. In this expression, ``CD'' stands for \emph{curvature-dimension}. Given that a Riemannian manifold (equipped with its volume measure) has Ricci curvature bounded below by some $\kappa \in \R$ and its dimension is at most $n$, several geometric and functional inequalities (in terms of $\kappa$ and $n$) follow. The curvature-dimension condition for Markov diffusions, introduced in the work of Bakry--\'Emery \cite{BakryEmery85}, allows one to define the notion of ``Ricci curvature bounded below by $\kappa$, dimension bounded above by $n$'' for weighted Riemannian manifolds (that is, Riemannian manifolds equipped with a measure that is not necessarily the volume measure) admitting consequences akin to the non-weighted case. On the other hand, starting from the work of McCann \cite{McCann97}, through the papers of Otto--Villani \cite{OttoVillani00}, Cordero-Erausquin--McCann--Schmuckenschl\"ager \cite{Cordero-ErasquinMcCannSchmuckenschlager01}, it was eventually established in the work of von Renesse--Sturm \cite{vonRenesseSturm05} that the ``Ricci curvature bounded below by $\kappa$'' condition on weighted Riemannian manifolds could equivalently be described using the displacement $\kappa$-convexity of relative entropy on the $2$-Wasserstein space over the Riemannian manifold. In the landmark papers of Sturm \cite{Sturm06(i), Sturm06(ii)} and Lott--Villani \cite{LottVillani09}, not only were dimensional considerations added (using R\'enyi entropy) to complete the picture on the displacement convexity-based alternative to the Bakry--\'Emery approach to curvature-dimension in the weighted Riemannian setting, but the natural extension of this theory to a wide class of metric measure spaces was established and explored. In the setting of (\emph{infinitesimally Hilbertian}) metric measure spaces, the equivalence between the two approaches was established by Erbar--Kuwada--Sturm \cite{ErbarKuwadaSturm15}, in which the authors introduce and study the \emph{$(\kappa , n)$-convexity} (see Definition \ref{def: knconvexity}) of relative entropy on Wasserstein space. Particularly, in the setting of the Euclidean space equipped with a measure $\d \nu = e^{-V} \d x$, the $(\kappa, n)$-convexity of $D(\cdot \Vert \nu)$ on $\wasser(\R^n)$ is equivalent to the $CD(\kappa,n)$ condition defined in terms of the displacement convexity of (relative) R\'enyi entropy (perhaps better known as R\'enyi divergence among information theorists), which is further equivalent to the Bakry--\'Emery definition of $CD(\kappa, n)$ condition for the Markov semigroup generated by $Lf = \Delta f - \< \grad V , f \>$. However, such equivalences need not hold over specific choices of Wasserstein geodesics. In the present work, our choice of using exponentiated relative entropy instead of the exponentiated R\'enyi divergence plays a key role. 
\end{rem}

Sometimes, to prove the displacement convexity properties of certain functionals it is useful to take an ``Eulerian viewpoint''. For $\mu_{0}, \mu_{1} \in \mathcal{P}_{2,ac} (\R^n)$, consider the transport maps $T_{t}$ 
as in the discussion preceding Example \ref{exmpl: sbentropy}. The previous description of the maps $T_{t}$ expresses the displacement in terms of the particle trajectories $T_{t}(x)$, of each particle $x$; this is the ``Lagrangian viewpoint''. One can equivalently describe the displacement in terms of a time-dependent vector field $v_{t}$ such that,
\begin{equation}
v_{t} ( T_{t}(x)) = \frac{\d }{\d t} T_{t} (x).
\end{equation}
The reason why $v_{t}$ can be defined in the above manner is that trajectories in optimal transport do not cross (in an almost-everywhere sense). Further, an argument based on the dominated convergence theorem leads to the \emph{continuity equation}, 
\begin{equation}\label{eq: continuityeqn}
\frac{\d }{\d t} \int \phi \d \mu_{t} = \int \langle \grad \phi , v_{t} \rangle \d \mu_{t},
\end{equation}
for all test functions $\phi$. It can be shown that $v_{t}$ is of the form $\grad \theta_{t}$, and $\theta_{t}$ satisfies a Hamilton--Jacobi equation in the weak sense, 
\begin{equation}\label{eq: HamiltonJacobi}
\frac{\partial \theta_{t} }{\partial t} + \frac{\vert \grad \theta_{t} \vert^{2}}{2} = 0. 
\end{equation}
Roughly speaking, the fact that $v_{t}$ is a gradient field follows from Brenier's theorem and semigroup properties of optimal transport, while the Hamilton--Jacobi equation is a consequence of $T_{t}$ carrying each particle along a straight line. For more details, the reader is referred to \cite[Section 5.4]{Villani03}.

\section{Dimensional convexity properties of relative entropy for log-concave measures with homogeneous potential} \label{sec: dimconhomo}
In this section, we fix a measure $\nu$ of the form $\d \nu = e^{-V} \d x$, where $V: \R^n \to \R \cup \{ + \infty \}$ is a convex function. We will break down the proof of the main result of this section into several lemmas, for the convenience of the reader who may wish to generalize our results.

The relative entropy with respect to $\nu$ can be decomposed into two parts. Suppose $\mu$ is a probability measure that has density with respect to $\nu$. Write $\frac{\d \mu}{\d x} = f$. Then,
\begin{equation}
D(\mu \Vert \nu) = \int f \log \frac{f}{e^{-V}} \d x = \int f \log f \d x + \int Vf \d x = - h(\mu) + \mathcal{V}(\mu) ,
\end{equation}
where $\mathcal{V}(\mu) = \int V \d \mu$. Recall that we are interested in the concavity of 
\begin{equation}
e^{- a D(\cdot \Vert \nu)} = e^{a h(\cdot)} e^{- a \mathcal{V}(\cdot)}.
\end{equation}
We will deal with the two factors separately. 

Firstly, it is known that $e^{\frac{h(\cdot)}{n}}$ is displacement concave on $\wasser (\R^n)$. As mentioned earlier, this was obtained by Erbar, Kuwada, and Sturm \cite[Theorem 3.12]{ErbarKuwadaSturm15} in a much more general setup. For completeness we specialize their argument to outline a proof. 
\begin{thm}[\cite{ErbarKuwadaSturm15}] \label{thm: EKS15}
    Lebesgue measure on $\R^n$ satisfies the ``entropic $CD(0,n)$ condition'', that is, $e^{\frac{h(\cdot)}{n}}$ is displacement concave on $\wasser (\R^n)$.
\end{thm}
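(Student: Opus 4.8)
The plan is to establish displacement concavity of $e^{h(\cdot)/n}$ by working along a displacement interpolation $\{\mu_t\}$ between two measures $\mu_0, \mu_1 \in \mathcal{P}_{2,ac}(\R^n)$ (the general case on all of $\wasser(\R^n)$ follows by the convention $h=-\infty$ off $\mathcal{P}_{ac}$, which makes the inequality trivial when either endpoint lacks a density). By Theorem \ref{thm: Brenier}, the interpolation is $\mu_t = (T_t)_\# \mu_0$ with $T_t = (1-t)I + t\grad\phi$ for a convex $\phi$, so the Jacobian $\grad T_t = (1-t)I + t\, \D^2\phi$ is symmetric positive semidefinite. Writing $g(t) = e^{h(\mu_t)/n}$, it suffices to show $g'' \le 0$ on $(0,1)$. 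Since $g = e^{h/n}$ and $g'' = \frac{1}{n} e^{h/n}\left( h'' + \frac{1}{n}(h')^2 \right)$, concavity of $g$ is equivalent to the differential inequality
\[
h''(t) + \frac{1}{n} \left( h'(t) \right)^2 \le 0.
\]

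First I would obtain a workable formula for $h(\mu_t)$. Using the change of variables $\frac{\d\mu_t}{\d x}(T_t(x)) = \frac{f_0(x)}{\det \grad T_t(x)}$, where $f_0 = \frac{\d\mu_0}{\d x}$, the entropy becomes
\[
h(\mu_t) = -\int f_0(x) \log f_0(x)\,\d x + \int f_0(x) \log \det\!\big( \grad T_t(x) \big)\,\d x.
\]
The first term is constant in $t$, so all $t$-dependence sits in $D(t) := \int f_0 \log \det(\grad T_t)\,\d x$. Differentiating under the integral, with the eigenvalues $\lambda_i(x) \ge 0$ of the symmetric matrix $\D^2\phi(x)$, gives $\det\grad T_t = \prod_i \big((1-t) + t\lambda_i\big)$, hence $\log\det\grad T_t = \sum_i \log\big((1-t)+t\lambda_i\big)$. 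Each summand $t \mapsto \log((1-t)+t\lambda_i)$ is concave in $t$, so $\log\det\grad T_t$ is concave, recovering the classical McCann displacement concavity of $h$ (Example \ref{exmpl: sbentropy}) as a warm-up.

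The heart of the matter is the sharper dimensional inequality, for which concavity of $h$ alone is insufficient; I need to control $h''$ against $(h')^2$. Setting $J_t(x) = \det\grad T_t(x)^{1/n}$, the key pointwise fact is that $t \mapsto J_t(x)^{1/n}$—equivalently $t \mapsto \big(\prod_i((1-t)+t\lambda_i)\big)^{1/n}$—is concave, which is exactly the statement that the $n$-th root of a product of nonnegative affine functions of $t$ is concave (a consequence of the AM–GM / superadditivity of $\det^{1/n}$ on positive semidefinite matrices). The plan is then to express $h(\mu_t)$ in terms of $\int f_0 \log J_t$ and apply Jensen's inequality in the right direction: the dimensional gain comes from pairing the pointwise concavity of $J_t^{1/n}$ with the concavity of $\log$, and then invoking Jensen to pass the averaging $\int f_0(\cdot)\,\d x$ through the exponential to recover the factor $\frac{1}{n}(h')^2$. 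Concretely, I would show $\frac{\d^2}{\d t^2} J_t^{1/n}(x) \le 0$ pointwise, integrate against $f_0$, and combine with the Cauchy–Schwarz/Jensen estimate $\left(\int f_0 \,\frac{\d}{\d t}\log\det\grad T_t\right)^2 \le n \int f_0 \left(\frac{\d}{\d t}\log\det\grad T_t\right)^{?}$ arranged so that the $(h')^2/n$ term is absorbed.

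\textbf{The main obstacle} I anticipate is precisely this last bookkeeping step: isolating the factor $\frac{1}{n}$ correctly. Displacement concavity of $h$ itself is a one-line consequence of termwise concavity of $\log\det\grad T_t$, but the refined statement $h'' + \frac{1}{n}(h')^2 \le 0$ requires that the pointwise second derivative of $\log\det\grad T_t$ dominate, after integrating against $f_0$, the squared first moment divided by $n$. This is where the pointwise concavity of $J_t^{1/n}$ (rather than of $\log J_t$) must be used, together with Jensen's inequality applied to the probability measure $\mu_0$, to convert a pointwise estimate into the desired functional inequality for $g = e^{h/n}$; keeping track of which quantity is averaged and ensuring the dimensional constant is exactly $n$ (and not something weaker) is the delicate part. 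A secondary technical point is justifying the differentiation under the integral sign and the a.e. validity of the Monge–Ampère change of variables, which I would handle by a standard approximation argument reducing to bounded, compactly supported, bounded-away-from-zero densities and then passing to the limit, as in \cite{ErbarKuwadaSturm15, McCann97}.
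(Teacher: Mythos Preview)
Your differential approach is viable and would succeed, though it differs from the paper's argument. The paper never takes second derivatives: it establishes the three-point inequality $e^{h(\mu_t)/n} \ge (1-t)e^{h(\mu_0)/n} + te^{h(\mu_1)/n}$ directly. From the Monge--Amp\`ere relation one has the pointwise estimate $f_t((1-t)x+ty)^{-1/n} \ge (1-t)f_0(x)^{-1/n} + tf_1(y)^{-1/n}$ (this is your concavity of $\det(\nabla T_t)^{1/n}$ in integrated form). Taking logarithms and integrating against the optimal coupling $\pi$, the paper applies Jensen's inequality to the \emph{jointly convex} function $G_t(a,b) = \log\big((1-t)e^a + te^b\big)$ to pull the integral inside, obtaining the result in one stroke. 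This avoids differentiation entirely and hence sidesteps the regularity issues you flag at the end.

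Regarding your route: the obstacle you identify is smaller than you think, and the stray factor of $n$ in your proposed Cauchy--Schwarz estimate is the source of the confusion. Set $J_t(x) = \det\nabla T_t(x)$ (not already raised to the power $1/n$, as you wrote). The pointwise concavity of $t \mapsto J_t(x)^{1/n}$ is \emph{equivalent} to the pointwise differential inequality $\partial_t^2 \log J_t + \tfrac{1}{n}(\partial_t \log J_t)^2 \le 0$, so the factor $\tfrac{1}{n}$ is already present before any integration. Integrating against $f_0$ gives $h'' + \tfrac{1}{n}\int f_0\,(\partial_t \log J_t)^2 \le 0$. Now apply Jensen for the convex function $u \mapsto u^2$ with respect to the probability measure $\mu_0$ (no factor of $n$ here): $\int f_0\,(\partial_t \log J_t)^2 \ge \big(\int f_0\,\partial_t \log J_t\big)^2 = (h')^2$. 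Combining yields $h'' + \tfrac{1}{n}(h')^2 \le 0$, exactly what you need. The paper's route is shorter and more robust; yours has the merit of making the local curvature-dimension structure explicit.
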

\begin{proof}[Proof outline]
    We want to show that the non-negative quantity $e^{\frac{h(\mu_{t})}{n}}$ is concave in $t$. If neither $\mu_{0}$ nor $\mu_{1}$ have density, there is nothing to prove. If one of them has density then the interpolants have density (\cite[Proposition 5.9]{Villani03}), and the result would follow from the corresponding result assuming both endpoints have density, using the upper-semicontinuity of the Shannon--Boltzmann entropy. Therefore we assume that $\mu_{0}, \mu_{1}$ have density with respect to the Lebesgue measure. Suppose $T_{t}= \grad \phi_{t}$ is the Brenier map from $\mu_{0}$ to $\mu_{t}$. Using the change of variables $f_{t}(\grad \phi_{t} (x))\det \grad^{2} \phi_{t} (x) = f_{0} (x)$ (see \cite[Theorem 4.8]{Villani03}) relating the densities $f_{t}$, and the $(1/n)$-concavity of the determinant on the space of positive-definite matrices, one obtains
    \begin{equation} \label{eq: dimensionaldensityrelation}
f_{t}((1-t)x + t y)^{-1/n} \geq (1-t)f_{0}(x)^{-1/n} + t f_{1}(x)^{-1/n},
    \end{equation}
    $\pi(x,y)-a.e.$, where $\pi$ is the optimal coupling between $\mu_{0}$ and $\mu_{1}$. We define a convex function,
    \begin{equation}
    G_{t}(a,b) = \log \left( (1-t) e^{a} + t e^{b}   \right).
    \end{equation}
    Then, by taking logarithms of both sides of the inequality \eqref{eq: dimensionaldensityrelation}, and integrating, 
    \begin{equation}
\begin{split}
    \frac{1}{n} h(\mu_{t}) &= \int - \frac{1}{n} \log f_{t} \d \mu_{t} \\
    &= \int -  \frac{1}{n} \log f_{t}((1-t)x + t y ) \d \pi (x,y)   \\
    & \geq \int \log \left(  (1-t)f_{0}(x)^{-1/n} + t f_{1}(y)^{-1/n}  \right) \d \pi (x,y) \\
    &= \int G_{t} \left( - \frac{1}{n} \log f_{0}(x), - \frac{1}{n} \log f_{1}(y) \right) \d \pi (x,y)\\
    & \geq G_{t} \left(  \int - \frac{1}{n} \log f_{0}(x) \d \pi (x,y) , \int - \frac{1}{n} \log f_{1}(y) \d \pi (x,y)    \right) \\
    &= G_{t} \left( \frac{1}{n}h(\mu_{0}) , \frac{1}{n}h(\mu_{1}) \right) = \log \left( (1-t)e^{h(\mu_{0})/n} + t e^{h(\mu_{1})/n}   \right). \qedhere
    \end{split}
    \end{equation}
\end{proof}
\begin{rem}
Let $X,Y$ be $\R^n$-valued random vectors. Then the above theorem says that
\begin{equation} \label{eq: EKS without scales}
e^{h(X+Y)/n} \geq e^{h(X)/n} + e^{h(Y)/n},
\end{equation}
where $(X,Y)$ are optimally coupled, that is, the distribution of $(X,Y)$ is the optimal coupling between the distributions of $X$ and $Y$. This can be seen by using the scaling property of entropy, $h(\lambda X) = h(X) + n \log \lambda$ for $\lambda \in (0, \infty)$. For a similar result, see the work of Aras and Courtade (\cite[Corollary 5]{ArasCourtade21}), where they give bounds for $h(X+Y)$ based on the amount of dependence allowed between $X$ and $Y$. At one end, when there is no restriction on the dependence, they obtain an inequality like the above. At the other end, when $X$ and $Y$ are assumed to be independent, they obtain the \emph{Entropy Power inequality}: $e^{2h(X+Y)/n} \geq e^{2h(X)/n} + e^{2h(Y)/n}$.
\end{rem}

We now turn to the other factor, $e^{-a \mathcal{V}(\cdot)}$. For this purpose, we would like a differential criterion for a function $e^{-a \theta (\cdot)}: [0,1] \to \R$ to be concave. 
\begin{lem} \label{lem: whatitmeansinderivative}
    Let $\theta : [0,1] \to \R$ be a twice continuously differentiable function, $a>0$. If we have 
    \begin{equation}
\theta ''(t) \geq a \left( \theta'(t) \right)^{2},
    \end{equation}
    then $e^{- a \theta(t)}$ is concave. 
\end{lem}
\begin{proof}
We simply use the second-derivative test for concavity, and the following calculation.
    \begin{equation}
    \left( e^{-a \theta } \right)'' = \left( - e^{- a \theta } a \theta ' \right)' = \left( - e^{a \theta} a \theta'' +  e^{-a \theta}a^{2} \left( \theta' \right)^{2}   \right) = a e^{-a \theta} \left(  a \left( \theta' \right)^{2} - \theta''  \right).
    \end{equation}
\end{proof}
Therefore, the quantities of interest are $\frac{\d}{\d t} \mathcal{V}(\mu_{t})$ and $\frac{\d^{2}}{\d t^{2}} \mathcal{V}(\mu_{t})$, which can be calculated using equation \eqref{eq: continuityeqn} and \eqref{eq: HamiltonJacobi}.
\begin{lem} \label{lem: explicitderivatiesforV}
    Suppose $\mathcal{V}(\mu) = \int V \d \mu$, for a twice continuously differentiable convex function $V$. Let $\grad \theta_{t}$ be the velocity field associated with the displacement interpolation $ \{ \mu_{t} \}_{t \in [0,1]}$. Then, 
    \begin{equation}
\frac{\d}{\d t} \mathcal{V}(\mu_{t}) = \int \langle \grad V, \grad \theta_{t} \rangle \d \mu_{t}, \, \frac{\d^{2}}{\d t^{2}} \mathcal{V}(\mu_{t}) =  \int \langle \grad^{2}V \cdot \grad \theta_{t}, \grad \theta_{t} \rangle \d \mu_{t}.
    \end{equation}
\end{lem}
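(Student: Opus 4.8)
The plan is to compute the two derivatives directly using the Eulerian description of the displacement interpolation, namely the continuity equation \eqref{eq: continuityeqn} and the Hamilton--Jacobi equation \eqref{eq: HamiltonJacobi}. For the first derivative, I would simply apply \eqref{eq: continuityeqn} with the test function $\phi = V$, which immediately gives
\[
\frac{\d}{\d t} \mathcal{V}(\mu_{t}) = \frac{\d}{\d t} \int V \d \mu_{t} = \int \langle \grad V, \grad \theta_{t} \rangle \d \mu_{t}.
\]
This is the clean case, modulo the technical point that $V$ may not literally be an admissible ``test function'' (it is unbounded), so one should either justify the use of \eqref{eq: continuityeqn} for $V$ by a truncation/approximation argument or invoke the regularity theory in \cite[Theorem 5.51]{Villani03} to which the paper has already deferred.

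For the second derivative, I would differentiate the expression $\int \langle \grad V, \grad \theta_{t} \rangle \d \mu_{t}$ again in $t$. The key point is that this is once more an integral of a (time-dependent) function against $\mu_{t}$, so differentiating produces two contributions: one from differentiating the measure $\mu_{t}$ (handled by the continuity equation, which contributes a $\langle \grad(\cdot), \grad \theta_{t}\rangle$ term) and one from differentiating the integrand $\langle \grad V, \grad \theta_{t}\rangle$ in $t$ explicitly (which contributes a $\langle \grad V, \grad \dot\theta_{t}\rangle$ term, where $\dot\theta_t = \partial_t \theta_t$). Writing $\Phi_t = \langle \grad V, \grad \theta_t\rangle$, applying \eqref{eq: continuityeqn} to the first piece gives $\int \langle \grad \Phi_t, \grad \theta_t\rangle \d \mu_t$, inside which the derivative $\grad \Phi_t$ expands by the product rule into $\grad^{2} V \cdot \grad \theta_t + (\grad^{2}\theta_t)\, \grad V$. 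The plan is then to show that the ``extra'' terms — the explicit $\partial_t$-term and the Hessian-of-$\theta$ term — cancel, leaving only the desired $\int \langle \grad^{2} V \cdot \grad \theta_t, \grad \theta_t \rangle \d \mu_t$.

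The engine for that cancellation is the Hamilton--Jacobi equation \eqref{eq: HamiltonJacobi}. Differentiating $\partial_t \theta_t + \frac{1}{2}|\grad \theta_t|^{2} = 0$ in space yields $\grad \dot\theta_t = - (\grad^{2}\theta_t)\,\grad\theta_t$, i.e. $\grad \dot\theta_t + (\grad^{2}\theta_t)\,\grad\theta_t = 0$. Pairing this identity with $\grad V$ shows precisely that $\langle \grad V, \grad\dot\theta_t\rangle + \langle (\grad^{2}\theta_t)\,\grad\theta_t, \grad V\rangle = 0$, which is exactly the sum of the two unwanted terms (using symmetry of $\grad^{2}\theta_t$ to rewrite $\langle (\grad^{2}\theta_t)\grad V, \grad\theta_t\rangle = \langle (\grad^{2}\theta_t)\grad\theta_t, \grad V\rangle$). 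Hence they annihilate each other and only the Hessian-of-$V$ term survives, giving the claimed formula.

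I expect the main obstacle to be purely a matter of rigor rather than of ideas: justifying that one may differentiate under the integral sign and apply \eqref{eq: continuityeqn} and \eqref{eq: HamiltonJacobi} pointwise, given that $V$ grows (it is $p$-homogeneous in the intended application) and that $\theta_t$ is only a Hamilton--Jacobi solution with limited a priori regularity. The clean way to handle this is to first establish both formulas along the Lagrangian trajectories $T_t(x) = (1-t)x + t T(x)$, where $\frac{\d}{\d t} V(T_t(x)) = \langle \grad V(T_t(x)), T(x) - x\rangle$ and $\frac{\d^{2}}{\d t^{2}} V(T_t(x)) = \langle \grad^{2} V(T_t(x))(T(x)-x), T(x)-x\rangle$ hold trivially by the chain rule, then integrate against $\mu_0$ and use $v_t(T_t(x)) = T(x) - x = \grad\theta_t(T_t(x))$ to translate back to the stated Eulerian form. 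This Lagrangian route sidesteps the regularity issues with $\theta_t$ entirely and makes the second-derivative computation a one-line application of the chain rule; the convexity and twice-differentiability of $V$ guarantee the integrand is well-defined, with integrability justified by the compact-support or moment hypotheses in force.
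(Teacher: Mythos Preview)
Your proposal is correct. The paper's own proof is precisely the Lagrangian shortcut you describe at the end: it writes $\mathcal{V}(\mu_t) = \int V(T_t(x))\,\d\mu_0$, applies the chain rule twice to get $\int \langle \grad^{2}V(T_t(x))\cdot \frac{\d}{\d t}T_t(x),\frac{\d}{\d t}T_t(x)\rangle \d\mu_0$, and then substitutes $\frac{\d}{\d t}T_t(x) = \grad\theta_t(T_t(x))$ to recover the stated Eulerian form. It does not carry out the Eulerian differentiation-plus-Hamilton--Jacobi-cancellation that forms the bulk of your write-up. Your Eulerian computation is valid and arguably more in the spirit of the later Section~\ref{sec: gaussian} calculations (Lemma~\ref{lem: derivativesofentropy}), but it does require the regularity of $\theta_t$ that you flagged; the Lagrangian route, as you note, sidesteps this entirely since $T_t$ is affine in $t$ and only $C^2$ of $V$ is used. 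The paper also remarks (Remark~\ref{rem: formulatrueforallflows}) that the Lagrangian computation makes it transparent that optimality of the flow plays no role.
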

\begin{proof}
    The formula for the first derivative is the continuity equation \eqref{eq: continuityeqn} itself. For the second derivative, we compute
    \begin{equation} 
\begin{split}
    \frac{\d^{2}}{\d t^{2}} \mathcal{V}(\mu_{t}) &= \frac{\d^{2}}{\d t^{2}} \int V (T_{t}(x)) \d \mu_{0}  = \frac{\d}{\d t} \int \langle \grad V (T_{t}(x)) , \frac{\d }{\d t} T_{t} (x) \rangle \d \mu_{0} (x)\\
    &= \int \langle \grad^{2}V (T_{t}(x)) \cdot \frac{\d}{\d t}T_{t}(x) , \frac{\d}{\d t}T_{t}(x) \rangle \d \mu_{0} + \int \langle \grad V (T_{t}(x)) , \frac{\d^{2}}{\d t^{2}} T_{t} (x) \rangle \d \mu_{0}  \\
    &= \int   \langle \grad^{2}V (T_{t}(x)) \cdot \grad \theta_{t} (T_{t}(x)) , \grad \theta_{t} (T_{t}(x)) \rangle \d \mu_{0} + 0 \\
    &= \int \langle \grad^{2}V \cdot \grad \theta_{t}, \grad \theta_{t} \rangle \d \mu_{t}, \\
\end{split}
\end{equation}
where $T_{t} = (1-t)I + tT$ denotes the optimal map transporting $\mu_{0}$ to $\mu_{t}$ which is used to define $\grad \theta_{t}$. 
\end{proof}
\begin{rem} \label{rem: formulatrueforallflows}
    From the proof, one can observe that the formula for the second derivative only uses the fact that each particle travels along a straight line with constant speed, that is, $\frac{\d^{2}}{\d t^{2}} T_{t} (x) = 0$. 
\end{rem}
Therefore, to obtain the concavity of $e^{- a \mathcal{V}(\cdot)}$ on a displacement convex subset of $\wasser(\R^n)$, we need to prove the inequality $ \int \langle \grad^{2} V \cdot \grad \theta_{t} , \grad \theta_{t} \rangle \d \mu_{t} \geq a \left( \int \langle \grad V, \grad \theta_{t} \rangle \d \mu_{t} \right)^{2}$ for displacement interpolants $\{\mu_{t} \}$ through that set. The following Cauchy--Schwarz inequality is useful for this purpose:
\begin{lem} \label{lem: twistedholder}
    Suppose $A(t)$ is a positive-definite matrix, for every $t$. Then,
    \begin{equation}
 \left(  \int \langle \grad \theta_{t}, \grad V \rangle \d \mu_{t}  \right)^{2} \leq \left( \int \langle A \cdot \grad \theta_{t} , \grad \theta_{t} \rangle \d \mu_{t}  \right) \left( \int \langle A^{-1} \cdot \grad  V , \grad V \rangle \d \mu_{t}\right).
    \end{equation}
\end{lem}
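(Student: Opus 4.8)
The plan is to reduce this to two successive applications of the Cauchy--Schwarz inequality, exploiting the fact that a positive-definite matrix $A$ admits a symmetric positive-definite square root $A^{1/2}$ (with inverse $A^{-1/2}$). First I would rewrite the integrand on the left-hand side by splitting $A$ symmetrically between the two gradients. Since $A^{1/2}$ is symmetric, one has the pointwise identity
\[
\langle \grad\theta_{t}, \grad V \rangle = \langle A^{1/2}\grad\theta_{t},\ A^{-1/2}\grad V \rangle,
\]
because $\langle A^{1/2}u,\, A^{-1/2}w\rangle = \langle u,\, A^{1/2}A^{-1/2}w\rangle = \langle u,\, w\rangle$ for any vectors $u,w$.

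Next I would apply the ordinary (finite-dimensional) Cauchy--Schwarz inequality to the two vectors $A^{1/2}\grad\theta_{t}$ and $A^{-1/2}\grad V$ at each point, giving
\[
\left| \langle \grad\theta_{t}, \grad V \rangle \right| \leq \left| A^{1/2}\grad\theta_{t} \right| \cdot \left| A^{-1/2}\grad V \right|
\]
for $\mu_{t}$-almost every point. Integrating this bound against $\mu_{t}$ and then applying the Cauchy--Schwarz inequality in $L^{2}(\mu_{t})$ to the product of the two nonnegative scalar functions $\left| A^{1/2}\grad\theta_{t} \right|$ and $\left| A^{-1/2}\grad V \right|$ yields
\[
\left| \int \langle \grad\theta_{t}, \grad V \rangle \d\mu_{t} \right| \leq \left( \int \left| A^{1/2}\grad\theta_{t} \right|^{2} \d\mu_{t} \right)^{1/2} \left( \int \left| A^{-1/2}\grad V \right|^{2} \d\mu_{t} \right)^{1/2}.
\]

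Finally I would identify the two squared norms with the quadratic forms appearing in the statement: using again that $A^{1/2}$ and $A^{-1/2}$ are symmetric, one has $\left| A^{1/2}\grad\theta_{t} \right|^{2} = \langle A\cdot\grad\theta_{t}, \grad\theta_{t}\rangle$ and $\left| A^{-1/2}\grad V \right|^{2} = \langle A^{-1}\cdot\grad V, \grad V\rangle$. Squaring the previous display then produces exactly the claimed inequality. I do not expect any real obstacle here, as the statement is simply a weighted Cauchy--Schwarz inequality; the only points requiring a little care are that the exponents on $A$ must balance so that $A$ and $A^{-1}$ appear in the two factors, and that one should bound the absolute value before squaring so that the sign of the integral on the left does not interfere.
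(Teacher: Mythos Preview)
Your proof is correct and follows essentially the same idea as the paper: the paper simply marks the lemma with a \qed, treating it as a standard Cauchy--Schwarz inequality obtained via the $A^{1/2}$/$A^{-1/2}$ splitting. The only cosmetic difference is that you factor the argument into a pointwise Cauchy--Schwarz followed by an $L^{2}(\mu_{t})$ Cauchy--Schwarz, whereas the (suppressed) argument in the paper expands $\int \lVert A^{1/2}\grad\theta_{t} - s\,A^{-1/2}\grad V\rVert^{2}\,\d\mu_{t}\ge 0$ and optimizes over $s\in\R$; both routes are standard and yield the same inequality.
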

\begin{proof}
We tread a beaten path. Let $s \in \R$, then
    \begin{equation}
\begin{split}
    &0 \leq \int \vert A^{1/2} \cdot \grad \theta_{t} - s A^{-1/2} \cdot \grad V \vert^{2}_{2} \d \mu_{t} \\
    &= \left( \int \langle A \cdot \grad \theta_{t} , \grad \theta_{t} \rangle \d \mu_{t}  \right) + s^{2} \left( \int \langle A^{-1} \cdot \grad  V , \grad V \rangle \d \mu_{t}\right) - 2s  \left(  \int \langle \grad \theta_{t}, \grad V \rangle \d \mu_{t}  \right). 
\end{split}
    \end{equation}
    Now the observation that the quadratic function $As^2 - 2Bs + C$ achieves its minimal value when $s= B/A$ completes the proof. 
\end{proof}
Using the above lemma with $A = \grad^{2}V$ gives,
\begin{equation} \label{eq: twisterholderinaction}
\left( \frac{\d }{\d t} \mathcal{V}(\mu_{t}) \right)^2  \leq \left( \frac{\d^{2}}{\d t^{2}} \mathcal{V}(\mu_{t}) \right) \left( \int \langle \left(\grad^{2}V\right)^{-1} \cdot \grad  V , \grad V \rangle \d \mu_{t}\right).
\end{equation}
Thus, we can upper bound $ \int \langle \left(\grad^{2}V\right)^{-1} \cdot \grad  V , \grad V \rangle \d \mu_{t}$ by $\frac{1}{a}$ to get the concavity of $e^{- a \mathcal{V}(\cdot)}$. When $V$ is \emph{$p$-homogeneous}, that is, when $V( \lambda x) = \lambda^{p} V(x)$ for all $\lambda > 0$, this quantity is a multiple of $V$ itself. In the remainder of this section, all unspecified evaluations should be understood as being evaluated at $x$. For example, $\int \langle \grad V , x \rangle \d \mu$ stands for $ \int \langle \grad V (x) , x \rangle \d \mu (x)$.
\begin{lem} \label{lem: the thing when V is p homo}
    Let $V$ be a $p$-homogeneous convex function. Then, 
    \begin{equation}
    \langle \left(\grad^{2}V\right)^{-1} \cdot \grad  V , \grad V \rangle = \frac{1}{p-1} \langle \grad V , x \rangle = \frac{p}{p-1}V.
    \end{equation}
\end{lem}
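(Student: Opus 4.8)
The plan is to reduce everything to Euler's identity for homogeneous functions. First I would establish the scalar Euler relation: differentiating $V(\lambda x) = \lambda^{p} V(x)$ in $\lambda$ and setting $\lambda = 1$ yields $\langle \grad V(x), x \rangle = p V(x)$. This instantly delivers the second claimed equality, since $\frac{1}{p-1}\langle \grad V, x\rangle = \frac{p}{p-1}V$, so the only real work is the first equality.

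For the first equality, the key step is to produce the vector identity $\grad^{2} V \cdot x = (p-1)\grad V$. I would obtain it by differentiating the Euler relation $\langle \grad V, x\rangle = pV$ componentwise in $x$: the $j$-th derivative of the left-hand side is $(\grad^{2} V \cdot x)_{j} + \partial_{j} V$, while the right-hand side gives $p\,\partial_{j} V$, and rearranging yields $(\grad^{2} V \cdot x)_{j} = (p-1)\partial_{j} V$ for every $j$. Equivalently, one may observe that differentiating the homogeneity relation in $x$ shows $\grad V$ is itself $(p-1)$-homogeneous, and then apply Euler's identity to each of its coordinates to reach the same conclusion.

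With this identity in hand, since $V$ is convex and (as already assumed when $(\grad^{2}V)^{-1}$ appears in the Cauchy--Schwarz step of Lemma \ref{lem: twistedholder}) the Hessian $\grad^{2} V$ is positive-definite and hence invertible, I can solve for $x$ to get $x = (p-1)(\grad^{2} V)^{-1}\grad V$, that is, $(\grad^{2} V)^{-1}\grad V = \frac{1}{p-1}x$. Pairing both sides with $\grad V$ then gives $\langle (\grad^{2} V)^{-1}\grad V, \grad V\rangle = \frac{1}{p-1}\langle x, \grad V\rangle$, which is precisely the first equality; chaining it with the Euler identity from the first paragraph closes the string of equalities.

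There is essentially no serious obstacle here. The only points requiring care are the invertibility of the Hessian (guaranteed by the strict convexity that is already in force wherever $(\grad^{2}V)^{-1}$ is written) and the mild regularity needed to differentiate twice, both of which are part of the standing hypotheses. The entire content of the lemma is the observation that homogeneity of $V$ propagates to $\grad V$ and thereby pins down the radial vector field $x$ as exactly $(p-1)(\grad^{2}V)^{-1}\grad V$.
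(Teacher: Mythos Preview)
Your proposal is correct and follows essentially the same route as the paper: establish Euler's identity $\langle \grad V, x\rangle = pV$, differentiate it to obtain $\grad^{2}V\cdot x = (p-1)\grad V$, and read off both equalities. Your discussion of the invertibility and regularity caveats is a welcome addition that the paper defers to the approximation argument in Theorem~\ref{thm: dimdispconV}.
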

\begin{proof}
    If $V$ is $p$-homogeneous, then $\langle \grad V , x \rangle = \lim_{t \to 0} 
\frac{V(x + tx) - V(x)}{t} = p V$, implying the second equality. Differentiating this equation we get $\grad^{2}V \cdot x = (p-1) \grad V$, which implies the first equality in the assertion. 
\end{proof}
We now bound $\int \langle \grad V , x \rangle \d \mu$ for a suitable class of measures (and general $V$), which will give us enough ingredients to put together a displacement concavity statement when $V$ is $p$-homogenous. The main argument of \cite[Lemma 5.3]{KolesnikovLivshyts21} extends to our slightly more general case.  Recall that a function $f : \R^n \to \R$ is said to be \emph{radially decreasing} if $f(\lambda x) \leq f(x)$, for all $x \in \R^n, \lambda \geq 1$. 

\begin{lem} \label{lem: for radially decreasing}
    Suppose $\mu \in \wasser(\R^n)$ is absolutely continuous with respect to the measure $\nu = e^{-V} \d x$, $V$ convex. If the density $\frac{\d \mu }{\d \nu}$ is radially decreasing, then $\int \langle \grad V, x \rangle \d \mu \leq n$.
\end{lem}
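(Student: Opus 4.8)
The plan is to write $g = \frac{\d \mu}{\d \nu}$ for the (radially decreasing) density, so that $\int g \, \d \nu = 1$ and the goal becomes
\[
\int \langle \grad V, x \rangle \, \d \mu = \int \langle \grad V(x), x \rangle \, g(x) \, \d \nu(x) \leq n.
\]
The key algebraic observation is that $\langle \grad V(x), x \rangle \, e^{-V(x)} = - \langle x, \grad(e^{-V})(x) \rangle$, so the quantity to bound is an integral of $g$ against (minus) the radial derivative of the density of $\nu$. Pairing this with the fact that the position vector field $x \mapsto x$ has divergence $n$ suggests integrating by parts against $x$, and the radial monotonicity of $g$ should then supply the favorable sign.

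Concretely, I would encode the radial monotonicity through the family of dilations. Define $\Psi(\lambda) = \int g(\lambda x) \, \d \nu(x)$ for $\lambda \geq 1$. Since $g(\lambda_2 x) = g\big((\lambda_2/\lambda_1)\lambda_1 x\big) \leq g(\lambda_1 x)$ whenever $1 \leq \lambda_1 \leq \lambda_2$, the function $\Psi$ is non-increasing on $[1,\infty)$, so its right derivative at $\lambda = 1$ is $\leq 0$. On the other hand, the substitution $y = \lambda x$ gives $\Psi(\lambda) = \lambda^{-n} \int g(y) \, e^{-V(y/\lambda)} \, \d y$, and differentiating under the integral sign at $\lambda = 1$ yields
\[
\Psi'(1) = -n \int g \, \d \nu + \int \langle \grad V(x), x \rangle \, g(x) \, \d \nu(x) = -n + \int \langle \grad V, x \rangle \, \d \mu .
\]
Combining the two computations gives $-n + \int \langle \grad V, x \rangle \, \d \mu \leq 0$, which is exactly the claim.

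I expect the main obstacle to be the analytic justification rather than the idea. One must justify differentiating under the integral sign, which is delicate because $V$ is only assumed convex and may equal $+\infty$ off a convex domain, so $e^{-V}$ and its radial derivative need integrable dominating bounds; one must also make sense of $\langle \grad V, x \rangle$ and of the change of variables at the regularity available for convex $V$. The point of working with the monotone $\Psi$ is precisely that it lets us avoid differentiating the possibly non-smooth $g$ pointwise: the monotonicity enters only through the sign of a one-sided derivative.

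As an alternative that sidesteps any use of $\grad g$, I would instead run the layer-cake decomposition $g = \int_0^\infty \ind_{\{g > t\}} \, \d t$. Radial monotonicity makes each superlevel set $K_t = \{g > t\}$ a star body, so by Fubini $\int \langle \grad V, x \rangle \, \d \mu = \int_0^\infty \big( \int_{K_t} \langle \grad V, x \rangle \, \d \nu \big) \, \d t$, and it suffices to prove $\int_K \langle \grad V, x \rangle \, \d \nu \leq n \, \nu(K)$ for each star body $K$; integrating this against $\int_0^\infty \nu(K_t) \, \d t = \int g \, \d \nu = 1$ closes the argument. For a fixed star body the divergence theorem gives $\int_K \langle \grad V, x \rangle \, \d \nu = n \, \nu(K) - \int_{\partial K} e^{-V} \langle x, \mathbf{n} \rangle \, \d S$, and the boundary term has the right sign because the gauge $\| \cdot \|_K$ is $1$-homogeneous, so Euler's identity forces $\langle x, \mathbf{n} \rangle \geq 0$ along $\partial K$. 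Here the technical heart is the same as before: justifying the divergence theorem for the generally non-smooth level-set boundaries together with the decay of $e^{-V}$ at infinity.
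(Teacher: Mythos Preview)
Your main dilation argument is essentially the paper's proof: the paper also defines $F(s)=\int g(sx)\,\d\nu(x)$, uses that $F'(1)\le 0$, and arrives at the same identity $F'(1)=\int \langle \grad V,x\rangle\,\d\mu - n$. The only cosmetic difference is that the paper differentiates $F$ directly (obtaining $\int \langle \grad g,x\rangle e^{-V}\,\d x$) and then integrates by parts, handling regularity by first approximating $g$ and $V$ by smooth functions, whereas you perform the change of variables $y=\lambda x$ before differentiating so that the derivative lands on $e^{-V}$ instead of on $g$; your version is a shade cleaner for exactly the reason you identify, but it is the same computation. Your layer-cake alternative is a genuinely different packaging that the paper does not use, trading the one-parameter dilation for a divergence-theorem argument on each star-shaped superlevel set; it is correct in spirit but, as you note, carries the same regularity caveats.
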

\begin{proof}
    Write $f = \frac{\d \mu }{\d \nu}$, so that $\frac{d \mu}{\d x} = f e^{-V}$. By approximation, we may assume that both $f$ and $V$ are smooth. Now, the function
    \begin{equation}
F (s) = \int f(sx) \d \nu (x) = \int f(sx) e^{-V(x)} \d x, 
    \end{equation}
    is clearly decreasing in $s$. Therefore,
    \begin{equation}
    \begin{split}
0 &\geq F'(1) = \int \langle \grad f(x) , x \rangle e^{-V(x)} \d x = \int \langle \grad f(x) , x e^{-V(x)} \rangle \d x \\
&= - \int f(x) \grad \cdot (x e^{-V(x)}) \d x 
= - \int f(x) \left( n e^{-V(x)} - \langle x , \grad V(x) \rangle e^{-V(x)} \right) \d x \\
&=  \int \langle \grad V , x \rangle \d \mu - n. \qedhere
\end{split} 
    \end{equation}
\end{proof}
This sets us up to obtain the following displacement concavity result. 
\begin{thm} \label{thm: dimdispconV}
    Let $V: \R^n \to [0 , \infty)$ be a $p$-homogeneous convex function, $p \in (1, \infty)$. Let $\nu$ be a probability measure of the form $\d \nu = e^{-V + c} \d x$, for some normalizing constant $c$. Then, the following statements are true. 
    \begin{enumerate}
    \item The set $S = \{ \mu : \mathcal{V}(\mu) \leq \frac{n}{p} \}$ is displacement convex and contains all measures whose density with respect to $\nu$ is radially decreasing.
    \item The functional  $e^{- \frac{p-1}{n}\mathcal{V}(\cdot)}$, where $\mathcal{V}(\mu) = \int V \d \mu$, is displacement concave on $S$.   
    \end{enumerate}
\end{thm}
\begin{proof}
First, note that $S$ is a displacement convex set because $\mathcal{V}$ is displacement convex on all of $\wasser(\R^n)$ (which is a direct consequence of the convexity of $V$, see \cite[Theorem 5.15 (ii)]{Villani03}). Moreover, $S = \{ \mu :  \int \langle \grad V , x \rangle \d \mu \leq n \}$, so by Lemma \ref{lem: for radially decreasing} it contains all measures with radially decreasing density with respect to $\nu$.

    For the displacement concavity statement, we begin by assuming that $V$ is twice continuously differentiable. Recall that the derivative criteria Lemma \ref{lem: whatitmeansinderivative} and the Cauchy--Schwarz applied to our case (Equation \eqref{eq: twisterholderinaction}) imply that we need to show
    \begin{equation}
\int \langle \left(\grad^{2}V\right)^{-1} \cdot \grad  V , \grad V \rangle \d \mu \leq \frac{n}{p-1},
    \end{equation}
    for all $\mu \in S$. However, this is precisely the defining property of $S$ in light of Lemma \ref{lem: the thing when V is p homo}.

    Finally we relax the smoothness assumption on $V$ using the following argument. Since $V$ is $p$-homogeneous and convex, $V^{1/p}$ is $1$-homogeneous and quasi-convex. Now every $1$-homogeneous quasi-convex function is necessarily convex, thus $V = h_{K}^{p}$ for some convex body $K$, where $h_{K}(x) = \sup_{y \in \mathbb{S}^{n-1}} \langle x , y \rangle$ denotes the support function of $K$. Recall that $K$ can be approximated by an increasing sequence of convex bodies $K_{i}$, with smooth support functions, which converge to $K$ in the Hausdorff metric (see, \cite[Section 3.4]{Schneider14}). Thus, we can obtain an increasing sequence of smooth $p$-homogeneous convex functions $V_{i} = h_{K_i}^p$ converging to $V$ monotonically. By dominated convergence, $\mathcal{V}_i(\mu) \to \mathcal{V}(\mu)$ for all $\mu$.  Each $S_i = \{ \mu : \mathcal{V}_{i}(\mu):= \int V_{i} \d \mu \leq \frac{n}{p} \}$ is a displacement convex set, therefore $S = \bigcap_{i=1}^\infty S_i$ is also displacement convex. Moreover, the functionals $e^{- \frac{p-1}{n}\mathcal{V}_{i}(\cdot)}$ are displacement concave on $S$, so this is also true for their pointwise limit 
    $e^{- \frac{p-1}{n}\mathcal{V}(\cdot)}$. The proof concludes by taking limits.
\end{proof}
Now that we know $e^{\frac{h(\cdot)}{n}}$ and $e^{- \frac{p-1}{n}\mathcal{V}(\cdot)}$ are both displacement concave on $S$, Theorem \ref{thm:main-entropy}, restated below, is a straightforward consequence. 
\begin{thm} \label{thm: dimdispconD}
    Let $V: \R^n \to [0 , \infty)$ be a $p$-homogeneous convex function, $p \in (1, \infty)$. Let $\nu$ be a probability measure of the form $\d \nu = e^{-V + c} \d x$, for some normalizing constant $c$. Then, the functional $e^{- \frac{p-1}{pn}D(\cdot \Vert \nu)}$ is displacement concave on the set $S = \{ \mu : \int V \d \mu \leq \frac{n}{p} \}$, which contains all measures with radially decreasing density with respect to $\nu$.
\end{thm}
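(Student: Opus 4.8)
The plan is to exploit the entropy decomposition recorded at the start of the section together with the two displacement concavity results just established, so that the target functional factors as a weighted geometric mean of two functionals already known to be concave along geodesics. Writing $f = \d\mu/\d x$ and using $\d\nu = e^{-V+c}\,\d x$, I would first note that
\[
D(\mu \Vert \nu) = -h(\mu) + \mathcal{V}(\mu) - c,
\]
and hence
\[
e^{-\frac{p-1}{pn}D(\mu\Vert\nu)} = e^{\frac{(p-1)c}{pn}}\left(e^{h(\mu)/n}\right)^{\frac{p-1}{p}}\left(e^{-\frac{p-1}{n}\mathcal{V}(\mu)}\right)^{\frac{1}{p}}.
\]
The leading constant is positive and irrelevant to concavity, so up to this scalar the functional of interest is a weighted geometric mean of $e^{h(\cdot)/n}$ and $e^{-\frac{p-1}{n}\mathcal{V}(\cdot)}$, with weights $\frac{p-1}{p}$ and $\frac{1}{p}$; these are positive and sum to $1$ precisely because $p>1$.

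Next I would invoke the two input theorems. By Theorem \ref{thm: EKS15} the map $t\mapsto e^{h(\mu_t)/n}$ is concave along any displacement interpolation, and by Theorem \ref{thm: dimdispconV} the map $t\mapsto e^{-\frac{p-1}{n}\mathcal{V}(\mu_t)}$ is concave provided the interpolation stays inside $S$. Since $S$ is displacement convex (shown in the proof of Theorem \ref{thm: dimdispconV}), a displacement interpolation between two measures of $S$ remains in $S$, so both factors are non-negative and concave in $t$ on $[0,1]$. It then remains to observe that a weighted geometric mean of non-negative concave functions is concave: the map $M(x,y) = x^{\frac{p-1}{p}}y^{\frac{1}{p}}$ is concave on $\{x,y\geq 0\}$ and nondecreasing in each coordinate, so the composition $t\mapsto M(F(t),G(t))$ is concave whenever $F,G\geq 0$ are concave. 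Applying this with $F(t)=e^{h(\mu_t)/n}$ and $G(t)=e^{-\frac{p-1}{n}\mathcal{V}(\mu_t)}$ gives the asserted displacement concavity of $e^{-\frac{p-1}{pn}D(\cdot\Vert\nu)}$ on $S$.

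The final clause of the statement, that $S$ contains all measures whose density with respect to $\nu$ is radially decreasing, requires nothing new: it is exactly the inclusion recorded in Theorem \ref{thm: dimdispconV}, obtained through Lemma \ref{lem: the thing when V is p homo} and Lemma \ref{lem: for radially decreasing}. Because the two input results have already done all of the analytic work, I do not expect a genuine obstacle here; the only points meriting care are (i) using the geodesic-closedness of $S$ to justify concavity of $e^{-\frac{p-1}{n}\mathcal{V}}$ along the \emph{entire} interpolation rather than just at the endpoints, and (ii) the harmless bookkeeping of the positive constant $e^{(p-1)c/pn}$ together with the degenerate case where $e^{h(\mu_t)/n}$ vanishes (i.e.\ $h=-\infty$), which the geometric-mean concavity and the upper-semicontinuity of $h$ accommodate without difficulty.
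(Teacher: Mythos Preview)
Your proposal is correct and follows essentially the same approach as the paper's own proof: factor $e^{-\frac{p-1}{pn}D(\cdot\Vert\nu)}$ as a positive constant times $\bigl(e^{h(\cdot)/n}\bigr)^{1-1/p}\bigl(e^{-\frac{p-1}{n}\mathcal{V}(\cdot)}\bigr)^{1/p}$, invoke Theorem~\ref{thm: EKS15} and Theorem~\ref{thm: dimdispconV} for the two factors, and conclude via the concavity and coordinatewise monotonicity of $(x,y)\mapsto x^{1-1/p}y^{1/p}$ on $[0,\infty)^2$. The paper's proof is slightly terser but otherwise identical, including the appeal to Theorem~\ref{thm: dimdispconV} for the final clause about radially decreasing densities.
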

\begin{proof}
    Observe that, 
    \begin{equation}
e^{- \frac{p-1}{pn}D(\cdot \Vert \nu)} = e^{\frac{p-1}{pn}c} \left( e^{\frac{h(\cdot)}{n}} \right)^{1- \frac{1}{p}} \left( e^{- \frac{p-1}{n}\mathcal{V}(\cdot)} \right)^{\frac{1}{p}},
    \end{equation}
    where $\mathcal{V}(\mu) = \int V \d \mu$. Now, since $(x,y) \mapsto x^{1 - \frac{1}{p}}y^{\frac{1}{p}}$ is concave on $[0, \infty)^{2}$ and increasing in each coordinate, it suffices to prove the displacement concavity of $e^{\frac{h(\cdot)}{n}}$ and $e^{- \frac{p-1}{n}\mathcal{V}(\cdot)}$ separately. Since this has already been achieved in Theorem \ref{thm: EKS15} and Theorem \ref{thm: dimdispconV} for $S$, the asserted displacement concavity result for $S$ follows. 
\end{proof}
Finally, observe that the density of $\nu_{K}$ with respect to $\nu$, if $K$ is a star body, is a radially decreasing function. Therefore, Theorem \ref{thm: dimdispconD} above and Proposition \ref{pro: wand}, together imply Theorem \ref{thm:main-BM intro} restated below.

\begin{thm}
\label{thm:main-BM}Let $V:\RR^{n}\to[0,\infty)$ be a $p$-homogeneous
convex function for $1<p<\infty$, and let $\nu$ be a measure with
density $\frac{\dd\nu}{\dd x}$ proportional to $e^{-V}$. Let $A,B\subseteq\RR^{n}$
be star bodies. Then for all $0\le\lambda\le1$ we have
\begin{equation}
\text{\ensuremath{\nu\left((1-\lambda)A+\lambda B\right)^{\frac{p-1}{pn}}\ge(1-\lambda)\nu(A)^{\frac{p-1}{pn}}+\lambda\nu\left(B\right)^{\frac{p-1}{pn}}.}}
\end{equation}
\end{thm}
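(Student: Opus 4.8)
The plan is to obtain this statement as an essentially immediate corollary of the displacement concavity proved in Theorem \ref{thm: dimdispconD} combined with the variational principle of Proposition \ref{pro: wand}; all of the analytic work has already been front-loaded into the preceding lemmas. Concretely, I would set $a = \frac{p-1}{pn}$ and take the class $\mathcal{K}$ in Proposition \ref{pro: wand} to be the collection of star bodies. To apply that proposition I must check its two hypotheses: first, that the conditional measures $\nu_A$ and $\nu_B$ belong to the displacement convex set $S = \{\mu : \int V \, \d\mu \le \frac{n}{p}\}$; and second, that $e^{-a D(\cdot \Vert \nu)}$ is displacement concave on $S$. The second hypothesis is exactly the conclusion of Theorem \ref{thm: dimdispconD}, so the only thing that genuinely requires verification is the first.

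For the membership $\nu_A, \nu_B \in S$, I would first note that for any star body $K$ the $\nu$-density of $\nu_K$ is $\frac{\d \nu_K}{\d \nu} = \frac{1}{\nu(K)}\ind_K$. The key elementary observation is that the indicator of a star body is radially decreasing: if $\lambda \ge 1$ and $\lambda x \in K$, then $x = \frac{1}{\lambda}(\lambda x) \in K$ by the star property (as $\frac{1}{\lambda} \in [0,1]$), so $\ind_K(\lambda x) \le \ind_K(x)$ for every $x$. Hence $\frac{\d \nu_K}{\d \nu}$ is radially decreasing, and Lemma \ref{lem: for radially decreasing} yields $\int \langle \grad V, x \rangle \, \d\nu_K \le n$; by the $p$-homogeneity identity of Lemma \ref{lem: the thing when V is p homo} this is precisely $\int V \, \d\nu_K \le \frac{n}{p}$, i.e. $\nu_K \in S$. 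This is exactly the ``contains all measures with radially decreasing density'' clause already recorded in Theorem \ref{thm: dimdispconD}.

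With both hypotheses in hand, I would invoke Proposition \ref{pro: wand} with $K_0 = A$, $K_1 = B$, and $a = \frac{p-1}{pn}$. Unwinding what this means: one feeds $\mu_0 = \nu_A$ and $\mu_1 = \nu_B$ into the displacement concavity inequality for $e^{-aD(\cdot\Vert\nu)}$ along their displacement interpolation $\{\mu_\lambda\}$, which remains inside $S$ because $S$ is displacement convex (Theorem \ref{thm: dimdispconV}); then one uses $\supp(\mu_\lambda) \subseteq (1-\lambda)A + \lambda B$ together with the lower bound $D(\mu_\lambda \Vert \nu) \ge -\log \nu\big((1-\lambda)A + \lambda B\big)$ supplied by Lemma \ref{lem: numeasureinent} to convert the entropic inequality into the desired measure inequality $\nu\big((1-\lambda)A + \lambda B\big)^{a} \ge (1-\lambda)\nu(A)^{a} + \lambda\,\nu(B)^{a}$.

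I do not anticipate a substantive obstacle, precisely because the hard part---the dimension-sensitive convexity of entropy on the restricted set $S$---has already been established. The one point deserving a word of care is purely measure-theoretic: Proposition \ref{pro: wand} and Lemma \ref{lem: numeasureinent} are phrased for compact sets, whereas a star body need only be Borel. Since $\nu$ is a probability measure, however, the only direction of Lemma \ref{lem: numeasureinent} actually used above is the Jensen lower bound $-D(\mu_\lambda\Vert\nu) \le \log\nu(\supp \mu_\lambda)$, and this holds verbatim for any Borel set of positive measure. Should one prefer to stay within the compact setting, I would instead run the argument on the truncations $A \cap \clball(0,R)$ and $B \cap \clball(0,R)$---which are again star bodies, with Minkowski average contained in $(1-\lambda)A + \lambda B$---and pass to the limit $R \to \infty$ using the monotone convergence $\nu(A \cap \clball(0,R)) \uparrow \nu(A)$ and likewise for $B$.
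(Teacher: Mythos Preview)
Your proposal is correct and follows essentially the same approach as the paper: observe that for a star body $K$ the density $\frac{\d\nu_K}{\d\nu}=\frac{1}{\nu(K)}\ind_K$ is radially decreasing, so $\nu_K\in S$ by Theorem~\ref{thm: dimdispconD}, and then invoke Proposition~\ref{pro: wand} with $a=\frac{p-1}{pn}$. Your added care about the compactness hypothesis in Proposition~\ref{pro: wand} and Lemma~\ref{lem: numeasureinent} (noting that only the Jensen direction is needed, or alternatively truncating by balls) is a welcome clarification the paper leaves implicit.
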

\qed

\section{Dimensional convexity properties of the Gaussian relative entropy} \label{sec: gaussian}
In this section, we will differentiate the relative entropy $D(\cdot \Vert \nu)$, where $\d \nu = e^{-V} \d x$, directly along displacement interpolations without decomposing it into two parts as in the previous section. 
While we focus on the Gaussian measure, $\nu = \gamma$, we will introduce the relevant setup for a general $\nu$.

In this direction, the presentation is cleaner and more insightful if we use the language of the Bakry--\'Emery $\Gamma$-calculus. Measures $\d \nu = e^{-V} \d x$, for smooth enough $V$, naturally occur as the equilibrium measure for the semigroup of operators $P_{t} = e^{tL}$, with generator $Lf = \Delta f - \langle \grad V , \grad f \rangle$. Introduction of the bilinear form $\Gamma$ (called the carr\'e du champ) on functions, and its iteration $\Gamma_{2}$, reduces the study of many properties regarding convergence to equilibrium to the study of the relationships among $L$, $\Gamma$, and $\Gamma_{2}$. 

In the case of $\d \nu = e^{-V} \d x$ under discussion, when the generator $L$ is as described earlier, the carr\'e du champ and its iteration are given by,
\begin{equation}
\begin{split}
    & \Gamma (f,g) = \langle \grad f , \grad g \rangle, \\
    & \Gamma_{2} (f,g) = \langle \grad^{2} f , \grad^{2} g \rangle + \langle \grad^{2}V \cdot \grad f , \grad g \rangle,
\end{split}
\end{equation}
where the first inner-product in the expression for $\Gamma_{2}$ is the Hilbert--Schmidt inner product. It is customary to write $\Gamma (f)$ for $\Gamma (f, f)$, and $\Gamma_{2} (f)$ for $\Gamma_{2}(f,f)$. We refer the reader to the text \cite{BakryGentilLedoux14} for a comprehensive treatment of this subject. However, what we will use in calculating the derivatives of $D(\mu_{t} \Vert \nu)$ is a simple integration-by-parts formula that the reader may verify directly,
\begin{equation} \label{eq: integrationbyparts}
 \int \Gamma (f,g) \d \nu = - \int f L g \d \nu.
\end{equation}
\begin{exmpl}
    When $V = 0$, one obtains the \emph{heat semigroup} which has the Lebesgue measure as its equilibrium measure. In this case, $L f= \Delta f$, $\Gamma (f) = \vert \grad f \vert^{2}$, and $\Gamma_{2} (f) = \Vert \grad^{2} f \Vert^{2}$. Here $\Vert \cdot \Vert$ denotes the Hilbert--Schmidt norm. 
\end{exmpl}
\begin{exmpl} \label{exmpl: OU}
    When $V$ corresponds to the Gaussian measure on $\R^n$, $\nu = \gamma$, the semigroup obtained is the well-known \emph{Ornstein--Uhlenbeck semigroup}. Then $Lf = \Delta f - \langle x, \grad f \rangle$, $\Gamma (f) = \vert \grad f \vert^{2}$, and $\Gamma_{2} (f) = \Vert \grad^{2} f \Vert^{2} + \vert \grad f \vert^{2}$. 
\end{exmpl}
Let us differentiate $D(\cdot \Vert \nu)$ two times, as needed. The underlying heuristic, namely that $ P_{t}  f \d \nu$ is the gradient flow of $D( \cdot \Vert \nu)$ (starting at $f \d \nu$) under a natural Riemannian-like structure on $\wasser (\R^n)$ introduced by Otto (\cite{Otto01}), is a powerful tool in the field. However, we will be content with the following simple calculation and refer the reader to \cite[Chapter 8]{Villani03} or \cite[Chapter 15]{Villani09} for more details.
\begin{lem} \label{lem: derivativesofentropy}
    Let $\d \nu = e^{-V} \d x$, for smooth $V$. Consider a displacement interpolation $\{ \mu_{t} \}_{t \in [0,1]}$ in $\wasser (\R^n)$, with the optimal velocity field $\grad \theta_{t}$. Suppose $\theta_{t}$ is sufficiently smooth, then 
    \begin{equation}
\begin{split}
    &\frac{\d}{\d t} D(\mu_{t} \Vert \nu) = - \int L \theta_{t} \d \mu_{t}, \\
    &\frac{\d^{2}}{\d t^{2}}  D(\mu_{t} \Vert \nu) = \int \Gamma_{2} (\theta_{t}) \d \mu_{t}., \\
\end{split}
    \end{equation}
    where $L, \Gamma_{2}$ are as described earlier. 
\end{lem}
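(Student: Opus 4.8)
The plan is to obtain both formulas by differentiating $D(\mu_t\Vert\nu)=\int\rho_t\log\rho_t\,\d\nu$ (with $\rho_t=\d\mu_t/\d\nu$) directly along the flow, feeding the continuity equation \eqref{eq: continuityeqn} and the Hamilton--Jacobi equation \eqref{eq: HamiltonJacobi} into the integration-by-parts identity $\int\Gamma(f,g)\,\d\nu=-\int fLg\,\d\nu$. Throughout I assume, as the hypothesis permits, that $\theta_t$ is smooth and decays fast enough that differentiation under the integral sign is legitimate and all boundary terms at infinity vanish.

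\emph{First derivative.} The Lebesgue continuity equation $\del_t(\rho_t e^{-V})+\grad\cdot(\rho_t e^{-V}\grad\theta_t)=0$ rearranges, after dividing by $e^{-V}$, into the $\nu$-weighted form $\del_t\rho_t+\langle\grad\rho_t,\grad\theta_t\rangle+\rho_t\,L\theta_t=0$. Differentiating $D(\mu_t\Vert\nu)$ and using mass conservation $\int\del_t\rho_t\,\d\nu=0$ gives $\frac{\d}{\d t}D=\int(\del_t\rho_t)\log\rho_t\,\d\nu$. Substituting the continuity equation and using $\langle\grad\rho_t,\grad\theta_t\rangle\log\rho_t=\langle\grad(\rho_t\log\rho_t-\rho_t),\grad\theta_t\rangle$ together with the integration-by-parts identity collapses all terms to $\frac{\d}{\d t}D(\mu_t\Vert\nu)=-\int L\theta_t\,\d\mu_t$. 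Equivalently, one may split $D=-h+\mathcal V$, reuse the formula $\frac{\d}{\d t}\mathcal V(\mu_t)=\int\langle\grad V,\grad\theta_t\rangle\,\d\mu_t$ from Lemma \ref{lem: explicitderivatiesforV}, and combine it with the elementary heat-flow computation $\frac{\d}{\d t}h(\mu_t)=\int\Delta\theta_t\,\d\mu_t$ to reach the same conclusion.

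\emph{Second derivative.} Now I differentiate $t\mapsto-\int L\theta_t\,\d\mu_t$, being careful that there are two sources of $t$-dependence. For a time-dependent integrand $g_t$ one has $\frac{\d}{\d t}\int g_t\,\d\mu_t=\int\del_t g_t\,\d\mu_t+\int\langle\grad g_t,\grad\theta_t\rangle\,\d\mu_t$, the last term coming from \eqref{eq: continuityeqn} applied to $g_t$ frozen at the current time. Taking $g_t=L\theta_t$ and using $\del_t\theta_t=-\tfrac12\Gamma(\theta_t)$ from \eqref{eq: HamiltonJacobi} (so that $\del_t(L\theta_t)=L\del_t\theta_t=-\tfrac12 L\Gamma(\theta_t)$, as $L$ has no $t$-dependence) yields $\frac{\d^2}{\d t^2}D=\tfrac12\int L\Gamma(\theta_t)\,\d\mu_t-\int\langle\grad\theta_t,\grad(L\theta_t)\rangle\,\d\mu_t$. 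The proof then closes with the pointwise Bochner identity $\Gamma_2(f)=\tfrac12 L\Gamma(f)-\Gamma(f,Lf)$ — which is the Bakry--\'Emery definition of $\Gamma_2$ and, for $Lf=\Delta f-\langle\grad V,\grad f\rangle$, reduces by a direct computation to the stated form $\Vert\grad^2 f\Vert^2+\langle\grad^2 V\cdot\grad f,\grad f\rangle$. Integrating this identity against $\mu_t$ gives $\frac{\d^2}{\d t^2}D(\mu_t\Vert\nu)=\int\Gamma_2(\theta_t)\,\d\mu_t$.

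\emph{Main obstacle.} The algebra is routine once the moving-integral differentiation rule and the Bochner identity are at hand; the genuine difficulty is analytic rather than computational. Kantorovich potentials of displacement interpolants are in general only Lipschitz and semiconcave, so $\theta_t$ need not be $C^2$, the Hamilton--Jacobi equation holds a priori only in the viscosity sense, and both interchanges used above (differentiation under the integral, and the vanishing of boundary terms in the integration by parts against $\nu=e^{-V}\,\d x$) require justification. This is precisely what the ``sufficiently smooth $\theta_t$'' hypothesis sidesteps; a fully rigorous argument would proceed by approximation and the regularity theory surveyed in \cite[Chapter 8]{Villani03}.
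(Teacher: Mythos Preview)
Your proof is correct and follows essentially the same route as the paper: both compute the first derivative via the continuity equation and the $\nu$-integration-by-parts $\int\Gamma(f,g)\,\d\nu=-\int fLg\,\d\nu$, and both obtain the second derivative by combining the Hamilton--Jacobi equation with the Bochner-type identity $\Gamma_2(f)=\tfrac12 L\Gamma(f)-\Gamma(f,Lf)$ (which the paper invokes as ``the expression for $\Gamma_2$ at the level of the integrands''). The only differences are cosmetic: you package the time differentiation as a moving-integral rule and name the Bochner identity explicitly, whereas the paper writes the same steps directly in terms of $\rho_t$ and $\nu$; your added paragraph on the regularity caveat is a welcome clarification not present in the paper's proof.
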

\begin{proof}
    Let $ \rho_{t}$ denote the density of $\mu_{t}$ with respect to $\nu$. Then,
    \begin{equation} \label{eq: firstderivativeentropycomp}
\begin{split}
\frac{\d}{\d t} D(\mu_{t} \Vert \nu) &= \frac{\d}{\d t} \int \rho_{t} \log \rho_{t} \d \nu = \frac{\d}{\d t} \int  \log \rho_{t} \d \mu_{t} \\
&= \int \partial_{t}  \log \rho_{t} \d \mu_{t} + \int \langle \grad \log \rho_{t} , \grad \theta_{t} \rangle \d \mu_{t} \\
&=  \frac{\d}{\d t}\int  \rho_{t} \d \nu + \int \langle \grad \log \rho_{t} , \grad \theta_{t} \rangle \rho_{t} \d \nu\\
&= 0 + \int \langle \grad \rho_{t} , \grad \theta_{t} \rangle \d \nu = - \int \rho_{t} \, L \theta_{t} \d \nu = - \int L \theta_{t} \d \mu_{t}. 
\end{split}
\end{equation}
In the above computation, the third equality uses the chain rule and the continuity equation \eqref{eq: continuityeqn}, and the sixth equality is an application of formula \eqref{eq: integrationbyparts} for the semigroup under consideration.

Furthermore, we can differentiate again to compute the second derivative, 
\begin{equation}
\begin{split}
\frac{\d^{2}}{\d t ^{2}}  D(\mu_{t} \Vert \nu) &= -  \frac{\d }{\d t} \int  L \theta_{t} \d \mu_{t} = - \int  L \del_{t} \theta_{t} \d \mu_{t} - \int \langle \grad L \theta_{t} , \grad \theta_{t} \rangle  \d \mu_{t}  \\
&= \int  L \left( \frac{|\grad \theta_{t} |^{2}}{2} \right)  \d \mu_{t} - \int \langle \grad L \theta_{t} , \grad \theta_{t} \rangle  \d \mu_{t} \\
&= \int \left(  L \left( \frac{|\grad \theta_{t} |^{2}}{2} \right) -   \langle \grad L \theta_{t} , \grad \theta_{t} \rangle \right) \d \mu_{t} = \int \Gamma_{2} (\theta_{t}) \d \mu_{t},
\end{split}
\end{equation}
where in the second equality we use the change rule and the continuity equation \eqref{eq: continuityeqn}, the third equality uses the Hamilton--Jacobi equation \eqref{eq: HamiltonJacobi}, while the last equality uses the formula
\begin{equation}
\Gamma_{2} (f) =  L \left( \frac{|\grad f |^{2}}{2} \right) -   \langle \grad L f , \grad f \rangle 
\end{equation}
for smooth $f$. This is a well-known weighted version of Bochner's formula, which can be easily obtained from its unweighted counterpart (see, for example, \cite[Chapter 14]{Villani09}).
\end{proof}
Therefore, if $S \subseteq \wasser (R^{n})$ is a displacement convex set, and we would like to prove that $e^{- a D (\cdot \Vert \nu)}$ is concave on $S$, we must prove that
\begin{equation}
\int \Gamma_{2} (\theta_{t}) \d \mu_{t} \geq a \left( \int L \theta_{t} \d \mu_{t} \right)^{2},
\end{equation}
for all displacement interpolations $\{ \mu_{t} \}_{t \in [0,1]}$ in $S$. Actually, in this section, we would like to go a little bit further than the above inequality to also utilize the ``curvature'' associated with the semigroup. To this end, we introduce the notion  of $(\kappa, 1/a)$-convexity of a function defined on a uniquely geodesic space. 

\begin{definition} \cite[Equation 1.2 rephrased]{ErbarKuwadaSturm15} \label{def: knconvexity}
    Let $(\mathcal{X},d)$ be a uniquely geodesic metric space. A function $S: \mathcal{X} \to [0, \infty]$ is called \emph{$(\kappa,1/a)$-convex} if for every constant speed geodesic $c : [0,1] \to \mathcal{X}$, we have
    \begin{equation}
 \frac{\d^{2}}{\d t^{2}} S(c_{t}) \geq \kappa d(c_{0}, c_{1})^{2} + a \left( \frac{\d }{\d t}S(c_{t}) \right)^{2},
    \end{equation}
    in the distributional sense. 
\end{definition}
\begin{rem}
    A global formulation of the above condition can be found in \cite[Definition 2.7]{ErbarKuwadaSturm15}. We discuss the particular case of $(2,n)$-convexity in Lemma \ref{lem: 2ncon}.
\end{rem}
Thus, Theorem \ref{thm: dimdispconD} from previous section establishes $(0, \frac{p}{p-1}n)$-convexity of the relative entropy $D(\cdot \Vert \nu)$ on the geodesic subspace $S = \{ \mu: \int V \d \mu \leq \frac{n}{p} \} \subseteq \wasser(\R^n)$ when $\d \nu = e^{-V} \d x$, where $V$ is a $p$-homogeneous convex function on $\R^n$.  In this case, it is possible to trade some of the dependence on $n$ for a dependence on the lower bounds on $\grad^{2} V$, but we choose not to pursue this direction in the current paper.

We will now specialize to the Gaussian case. Note that this corresponds to the semigroup in Example \ref{exmpl: OU}. Here we aim to obtain a $(\kappa, n)$-convexity of $D(\cdot \Vert \gamma)$ on a suitable class of measures. Thus, we want to obtain an inequality that reads,
\begin{equation} \label{eq: entropiccdkn}
\int \Gamma_{2}(\theta_{t}) \d \mu_{t} \geq \kappa \wasser^{2}(\mu_{0}, \mu_{1}) + \frac{1}{n} \left( \int L \theta_{t} \d \mu_{t} \right)^{2},
\end{equation}
that is,
\begin{equation}
\int \Vert \grad^{2} \theta_{t} \Vert^{2} + \vert \grad \theta_{t} \vert^{2} \d \mu_{t} \geq  \kappa \wasser^{2}(\mu_{0}, \mu_{1}) + \frac{1}{n} \left( \int L \theta_{t} \d \mu_{t} \right)^{2}.
\end{equation}
This is remarkably similar to the sufficiency condition in \cite{KolesnikovLivshyts21} obtained from the Kolesnikov--Milman approach (\cite{KolesnikovMilman18}) with the exception of the first term on the right. This comparison goes beyond similarity, to the level of the proof itself, since it is the same as in \cite{EskenazisMoschidis21} albeit presented in a slightly different form. While we claim no originality here, for the sake of completeness we repeat the argument from \cite{EskenazisMoschidis21} adapted to obtain the statement we want. 

\begin{thm} \label{thm: 2nconunderconditions}
Denote by $S_{n}$ the collection of all even measures $\mu$ in $\mathcal{P}_{2,ac}(\R^n)$ which satisfy the inequality $\int f^{2} \d \mu \leq \int \vert \grad f \vert^{2} \d \mu$ for all odd test functions $f$. Then, 
\begin{equation}
\int \Gamma_{2} (u) \d \mu \geq 2 \int \Gamma (u) \d \mu + \frac{1}{n} \left( \int L u \d \mu \right)^{2},
\end{equation}
for all even functions $u$. Here $L$, $\Gamma_{2}$ are the generator and the iterated carr\'e du champ, respectively, for the Ornstein--Uhlenbeck semigroup. 

Consequently, if $\{ \mu_{t} \}_{t \in [0,1]}$ is a displacement interpolation that lies entirely in $S_{n}$ and is driven by a smooth velocity field, then $D(\mu_{t}\Vert \gamma)$ is $(2,n)$-convex. In particular,
\begin{equation} \label{eq: dispcongaussiannok}
e^{-\frac{D (\mu_{t} \Vert \gamma)}{n}} \geq (1-t) e^{-\frac{D (\mu_{0} \Vert \gamma)}{n}} + t e^{-\frac{D (\mu_{1} \Vert \gamma)}{n}}.
\end{equation}
\end{thm}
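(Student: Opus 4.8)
My plan is to deduce both the $(2,n)$-convexity and the displacement concavity \eqref{eq: dispcongaussiannok} from the single integrated inequality stated in the theorem, and to treat that inequality as the real content. For the reduction I would argue as follows: along a displacement interpolation $\{\mu_t\}\subseteq S_n$ with smooth (and, crucially, even) potential $\theta_t$, Lemma \ref{lem: derivativesofentropy} gives $\frac{\d}{\d t}\DD{\mu_t}{\gamma}=-\int L\theta_t\,\d\mu_t$ and $\frac{\d^2}{\d t^2}\DD{\mu_t}{\gamma}=\int\Gamma_2(\theta_t)\,\d\mu_t$, while along a constant-speed geodesic the kinetic energy is constant and equals the squared speed, $\int\Gamma(\theta_t)\,\d\mu_t=\int|\grad\theta_t|^2\,\d\mu_t=\wasser^2(\mu_0,\mu_1)$. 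Plugging $u=\theta_t$ into the integrated inequality then yields exactly $\frac{\d^2}{\d t^2}\DD{\mu_t}{\gamma}\ge 2\wasser^2(\mu_0,\mu_1)+\frac1n\big(\frac{\d}{\d t}\DD{\mu_t}{\gamma}\big)^2$, which is $(2,n)$-convexity in the sense of Definition \ref{def: knconvexity}. Dropping the nonnegative curvature term leaves $\theta''\ge\frac1n(\theta')^2$ with $\theta(t)=\DD{\mu_t}{\gamma}$ convex, so Lemma \ref{lem: whatitmeansinderivative} with $a=1/n$ gives the concavity of $e^{-\DD{\mu_t}{\gamma}/n}$, which is \eqref{eq: dispcongaussiannok}.

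It then remains to prove the integrated inequality. Using the Ornstein--Uhlenbeck expressions $\Gamma_2(u)=\Vert\grad^2 u\Vert^2+|\grad u|^2$, $\Gamma(u)=|\grad u|^2$ and $Lu=\Delta u-\langle x,\grad u\rangle$, I would first rewrite the claim as
\[
\int\Vert\grad^2 u\Vert^2\,\d\mu-\int|\grad u|^2\,\d\mu\ \ge\ \frac1n\Big(\int Lu\,\d\mu\Big)^2.
\]
Here membership in $S_n$ enters through the test functions of \cite{EskenazisMoschidis21}. Since $u$ is even each $\partial_i u$ is odd, and $x_i$ is odd, so for every real $\lambda$ the functions $f_i=\partial_i u-\lambda x_i$ are odd and admissible in the defining Poincaré inequality of $S_n$. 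Summing $\int f_i^2\,\d\mu\le\int|\grad f_i|^2\,\d\mu$ over $i$, using $\sum_i|\grad f_i|^2=\Vert\grad^2 u-\lambda I\Vert^2$ and $\sum_i f_i^2=|\grad u-\lambda x|^2$, and rearranging (with $\int\Delta u\,\d\mu-\int\langle x,\grad u\rangle\,\d\mu=\int Lu\,\d\mu$) gives
\[
\int\Vert\grad^2 u\Vert^2\,\d\mu-\int|\grad u|^2\,\d\mu\ \ge\ 2\lambda\int Lu\,\d\mu-\lambda^2\Big(n-\int|x|^2\,\d\mu\Big).
\]

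This holds for every $\lambda$, while the left-hand side is a fixed finite number, so the quadratic on the right must stay bounded above; this already forces $n-\int|x|^2\,\d\mu\ge0$ (equivalently, apply the $S_n$-Poincaré inequality to the odd functions $x_i$). I would then optimize in $\lambda$: when $n-\int|x|^2\,\d\mu>0$ the maximum is $(\int Lu\,\d\mu)^2/(n-\int|x|^2\,\d\mu)\ge\frac1n(\int Lu\,\d\mu)^2$, using $\int|x|^2\,\d\mu\ge0$; and when $n-\int|x|^2\,\d\mu=0$ boundedness forces $\int Lu\,\d\mu=0$, so the target is $0$ and the inequality holds because the left side is nonnegative (the $\lambda=0$ case is exactly Poincaré applied to each $\partial_i u$). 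This closes the integrated inequality and hence the theorem.

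The main obstacle, and the whole point, is this integrated inequality: the key realization is that one should \emph{not} attempt a pointwise $\Gamma_2$-bound, which fails for Ornstein--Uhlenbeck because the drift term in $Lu$ is not controlled by $\Vert\grad^2 u\Vert^2$, but instead exploit the Poincaré structure of $S_n$ through the one-parameter family $f_i=\partial_i u-\lambda x_i$ and the optimization in $\lambda$; both the curvature ``$2$'' and the full dimensional gain emerge from this single device. The remaining issues are technical: one must justify that along an even interpolation the velocity potential $\theta_t$ is even (which follows from uniqueness of optimal transport between even measures, the Brenier map then being odd) and regular enough for Lemma \ref{lem: derivativesofentropy}, that the linear functions $x_i$ are legitimate test functions in the definition of $S_n$ (by approximation, using only $\mu\in\mathcal{P}_2$), and that $\int|\grad\theta_t|^2\,\d\mu_t=\wasser^2(\mu_0,\mu_1)$ is the standard Benamou--Brenier expression for the speed of a Wasserstein geodesic.
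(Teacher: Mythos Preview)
Your argument is correct and is essentially the paper's proof: both apply the $S_n$-Poincar\'e inequality to the odd functions $\partial_i u-\lambda x_i=\partial_i\bigl(u-\tfrac{\lambda}{2}|x|^2\bigr)$ and then read off the integrated $\Gamma_2$-inequality, with the deduction of $(2,n)$-convexity and \eqref{eq: dispcongaussiannok} proceeding exactly as you describe. The only cosmetic difference is that the paper fixes the specific value $\lambda=\tfrac{1}{n}\int Lu\,\d\mu$ from the outset (writing $v=u-\tfrac{l}{2n}|x|^2$), which already yields $\tfrac{l^2}{n}+\tfrac{l^2}{n^2}\int|x|^2\,\d\mu\ge\tfrac{l^2}{n}$ and so avoids your case split at $\int|x|^2\,\d\mu=n$; your optimization over $\lambda$ gives the marginally sharper lower bound $l^2/\bigl(n-\int|x|^2\,\d\mu\bigr)$ but is otherwise the same computation.
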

\begin{rem}
    If the measure $\mu$ satisfies the inequality $\var_{\mu}(f) \leq C \int \vert \grad f \vert^{2} \d \mu$, we say that $\mu$ satisfies a \emph{Poincar\'e inequality} with constant $C$. The best possible $C$ is called the \emph{Poincar\'e constant} of $\mu$. If we assume that the function $f$ is odd and the measure $\mu$ is even, then $\var_{\mu}(f) = \int f^{2} \d \mu$. Thus, $S_{n}$ is the collection of even measures, absolutely continuous with respect to the Lebesgue measure and having finite second moment, which satisfy a Poincar\'e inequality for odd functions with constant $1$. 
\end{rem}
\begin{proof}
     Let $u$ be an even function. Consider $v = u  -  r$, where $r(x) = \frac{\int L u \d \mu}{2n} |x|^{2}$. We will write $l = \int Lu \d \mu$ for notational convenience. Note that $\grad r (x) = \frac{l}{n}x, \grad^{2} r (x) = \frac{l}{n} I$, where $I$ denotes the identity matrix, and $\Delta r = l$.  
Therefore, 
\begin{equation}
\begin{split}
\Vert \grad^2 u \Vert^{2} 
&= \Vert \grad^2 v + \grad^{2} r \Vert^{2} 
= \Vert \grad^{2} v \Vert^{2} + \frac{2l}{n} \Delta v + \frac{l^2}{n} 
= \Vert \grad^2 v \Vert^{2} + \frac{2 l}{n} \Delta u - \frac{l^2}{n} \\
&=  \Vert \grad^2 v \Vert^{2} + \frac{2l}{n} \left(   L u + \< x , \grad u\>  \right) - \frac{l^2}{n} \\
&= \Vert \grad^2 v \Vert^{2} + \frac{2l}{n}  \< x , \grad u\> + \left( \frac{2l}{n} Lu - \frac{l^2}{n} \right).
\end{split}
\end{equation}
Now, since $v$ is even, $\partial_{i}v$ is an odd function, for each $i$. Therefore, if $\mu \in S_{n}$ then we can apply the Poincar\'e inequality assumption to $\partial_{i} v$ in the calculation below. 
\begin{equation}
\begin{split}
 \int \Vert \grad^2 v \Vert^2 \d \mu 
 &= \sum_{i} \int \sum_{j} \left( \partial_{j} \partial_{i} v \right)^{2} \d \mu 
 = \sum_{i} \int \vert \grad \partial_{i} v \vert^{2} \d \mu  
  \geq \sum_{i} \int \left( \partial_{i} v \right)^2 \d \mu \\
  &= \sum_{i} \int \left(  \partial_{i} u_{i} - \frac{l}{n}x_{i} \right)^{2} \d \mu
  = \int \left( | \grad u |^{2} - \frac{2l}{n} \< x , \grad u \> + \frac{l^2}{n^{2}} |x|^{2} \right) \d \mu.
 \end{split}
\end{equation}

Putting this inequality into the expression for $\Vert \grad^{2} u \Vert^{2}$ obtained earlier,
\begin{equation}
\begin{split}
 \int  \Gamma_{2} (u) \d \mu &= \int \left( \Vert \grad^{2} u \Vert^{2} + \vert \grad u \vert^{2} \right) \d \mu  \\
 &= \int \left( \Vert \grad^2 v \Vert^{2} + \frac{2l}{n}  \< x , \grad u\> + \left( \frac{2l}{n} Lu - \frac{l^2}{n} \right) + \vert \grad u \vert^{2} \right) \d \mu \\
 &\geq \int \left( | \grad u |^{2} - \frac{2l}{n} \< x , \grad u \> + \frac{l^2}{n^{2}} |x|^{2}  + \frac{2l}{n}  \< x , \grad u\> + \left( \frac{2l}{n} Lu - \frac{l^2}{n} \right) + \vert \grad u \vert^{2} \right) \d \mu  \\
 &= \int \left( 2 | \grad u |^{2}  + \frac{l^{2}}{n^{2}} |x|^{2} +  \left( \frac{2l}{n} Lu - \frac{l^2}{n} \right) \right) \d \mu \\
 &\geq \int \left( \frac{2l}{n} Lu - \frac{l^2}{n} \right) \d \mu + 2 \int \vert \grad u \vert^{2} \d \mu = \frac{1}{n} l^{2} +  2 \int \vert \grad u \vert^{2} \d \mu  \\
 &=\frac{1}{n} \left( \int L u \d \mu \right)^{2} + 2 \int \Gamma (u) \d \mu.
\end{split}
\end{equation}
For the second part, observe that a potential $\theta_{t}$ for the velocity field $\grad \theta_{t}$ corresponding to optimal transport between any two measures $\mu_{0}$ and $\mu_{1}$ in $S_{n}$ must be even, since the measures themselves are even. Further, $\int \Gamma (\theta_{t}) \d \mu_{t} = \int \vert \grad \theta_{t} \vert^{2} \d \mu_{t}$, but the latter quantity equals $\wasser^{2}(\mu_{0}, \mu_{1})$ by \cite[Remarks 5.40 (iii), Example 5.42]{Villani03}. Therefore, by the first part, if the displacement interpolation between $\mu_{0}$ and $\mu_{1}$ is contained in $S_{n}$ then the condition of inequality \eqref{eq: entropiccdkn} is satisfied. This establishes the second part of the theorem.
\end{proof}
\begin{rem}
    As indicated in the Introduction, an affirmative answer to Question \ref{qstn: poincare} for $\mu_{0} = \gamma_{K_{0}}, \mu_{1} = \gamma_{K_{1}}$, for symmetric convex bodies $K_{0}$ and $K_{1}$, would imply the result of Eskenazis--Moschidis (\cite{EskenazisMoschidis21}).
\end{rem}
\begin{definition}
    A measure $\mu \in \mathcal{P}(\R^n)$ is said to be \emph{strongly log-concave} if its density with respect to the Gaussian measure $\gamma$ on $\R^n$ exists, and is a log-concave function.
\end{definition}
\begin{exmpl}
    Let $K$ be a convex body. Then, $\gamma_{K}$ is strongly log-concave because $\frac{\d \gamma_{K}}{\d \gamma} = \frac{1}{\gamma(K)} \ind_{K}$ is a log-concave function. 
\end{exmpl}

Returning to our investigation of displacement convexity properties of $S_{n}$, we note that $S_{n}$ itself is displacement convex when $n=1$.
\begin{thm}
    The set $S_{1} \subseteq \wasser (\R)$ is displacement convex. Therefore, in particular,
    \begin{equation}
e^{-D(\mu_{t} \Vert \gamma)} \geq (1-t) e^{-D(\mu_{0} \Vert \gamma)} + t e^{- D(\mu_{1} \Vert \gamma)},
    \end{equation}
    when $\mu_{0}, \mu_{1}$ are even strongly log-concave probability measures. 
\end{thm}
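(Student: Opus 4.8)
The ``in particular'' inequality is essentially immediate once the displacement convexity of $S_1$ is established: even strongly log-concave measures lie in $S_1$ by Caffarelli's contraction theorem (as recorded in the preceding remarks), so if $S_1$ is displacement convex then the entire interpolation $\{\mu_t\}$ between two such measures stays in $S_1$, and Theorem \ref{thm: 2nconunderconditions} with $n=1$ gives the $(2,1)$-convexity of $D(\cdot \Vert \gamma)$ and hence the stated concavity of $e^{-D(\mu_t \Vert \gamma)}$. (One should note that Theorem \ref{thm: 2nconunderconditions} asks for a smooth driving velocity field; for even strongly log-concave endpoints the densities are smooth and positive, so this can be arranged directly, or by a routine approximation.) The real content is therefore the first assertion, which I would prove using the one-dimensional quantile picture, where optimal transport is completely explicit.

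Let $Q_i = F_i^{-1}$ be the quantile function of $\mu_i$ and $w_i = Q_i'$. In dimension one the displacement interpolant satisfies $Q_t = (1-t)Q_0 + tQ_1$, so that $w_t := Q_t' = (1-t)w_0 + tw_1$; moreover $\mu_t$ is again even (the Brenier map between even measures is odd), absolutely continuous, and of finite second moment. Pushing forward by $Q$, membership $\mu \in S_1$ is equivalent to the weighted Hardy--Poincar\'e inequality
\[
\int_0^1 G^2 \, \d u \le \int_0^1 \frac{(G')^2}{w^2}\, \d u
\]
for every $G$ on $(0,1)$ that is antisymmetric about $u=1/2$ (precisely the functions $g \circ Q$ with $g$ odd), where $w=Q'$. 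Evenness of $\mu$ makes both $w$ and $G'$ symmetric about $1/2$, which is what will keep the auxiliary functions below antisymmetric.

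The tempting move --- bounding $1/w_t^2 = 1/((1-t)w_0+tw_1)^2$ by its convex combination $(1-t)/w_0^2 + t/w_1^2$ --- pushes the inequality in the wrong direction; indeed $\inf_G \int (G')^2 w^{-2}\,\d u / \int G^2\,\d u$ is concave and increasing in $w^{-2}$, so this route provably cannot close. This is the main obstacle, and the resolution is to transfer the two endpoint inequalities along \emph{matched} test functions. Given an antisymmetric $G$, define $H_0,H_1$ by
\[
H_i' = \frac{w_i}{w_t}\, G', \qquad H_i(1/2)=0 \quad (i=0,1).
\]
Then $H_0,H_1$ are again antisymmetric, one has $(1-t)H_0 + tH_1 = G$ (differentiate and use $w_t=(1-t)w_0+tw_1$), and the crucial matching identity $H_0'/w_0 = H_1'/w_1 = G'/w_t$ holds pointwise. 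In particular $\int_0^1 (H_i')^2/w_i^2\,\d u = \int_0^1 (G')^2/w_t^2\,\d u$ for $i=0,1$.

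Applying the $\mu_0$- and $\mu_1$-inequalities to $H_0$ and $H_1$ respectively, and using convexity of $s\mapsto s^2$, we obtain
\[
\int_0^1 G^2 \,\d u \;\le\; (1-t)\int_0^1 H_0^2\,\d u + t\int_0^1 H_1^2\,\d u \;\le\; \int_0^1 \frac{(G')^2}{w_t^2}\,\d u,
\]
where the first step is Jensen applied to $G=(1-t)H_0+tH_1$ and the second uses $\int H_i^2\,\d u \le \int (H_i')^2/w_i^2\,\d u = \int (G')^2/w_t^2\,\d u$. As $G$ was arbitrary, $\mu_t\in S_1$, so $S_1$ is displacement convex, and the stated concavity of $e^{-D(\cdot\Vert\gamma)}$ follows as in the first paragraph. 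The one genuinely delicate point is the construction of $H_0,H_1$: it is engineered so that both endpoint Rayleigh quotients equal $\int (G'/w_t)^2\,\d u$, after which the endpoint Poincar\'e inequalities and Jensen together overcome the ``wrong direction'' of convexity that defeats the naive argument.
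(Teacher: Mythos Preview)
Your proof is correct and is essentially the paper's argument transplanted to quantile coordinates: your matched test functions $H_i$ with $H_i'/w_i = G'/w_t$ are exactly the paper's family $f_t$ defined by $f_t' = f'\circ T_t^{-1}$ (so that $\int (f_t')^2\,\dd\mu_t$ is constant in $t$), and your Jensen step is their observation that $t\mapsto \int f_t^2\,\dd\mu_t$ is convex. The paper likewise invokes Caffarelli's regularity/approximation to justify the smoothness needed for the second assertion.
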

\begin{proof}
    Let $\mu_{0}, \mu_{1} \in S_{1}$. Denote by $T$ the optimal map pushing forward $\mu_{0}$ to $\mu_{1}$ guaranteed by Theorem \ref{thm: Brenier}. Thus, $(T_{t})_{\#}\mu_{0} = \mu_{t}$, $t \in [0,1]$, is the displacement interpolation. Let $f$ be a continuously-differentiable odd function supported within $\supp (\mu_{0})$. The main idea is to choose functions $f_{t}$ such that $f'_{t} = f' \circ T_{t}^{-1}$. We let 
    \begin{equation}
f_{t} (x) = \int_{0}^{x} f' (T_{t}^{-1}(y)) \d y.
    \end{equation}
    Therefore, 
    \begin{equation}
\begin{split}
\int \vert f_{t} \vert^{2} \d \mu_{t} 
&= \int \vert f_{t} (T_{t} (x)) \vert^{2} \d \mu_{0} (x) = \int \left( \int_{0}^{T_{t}(x)} f' (T_{t}^{-1}(y)) \d y \right)^{2} \d \mu_{0} (x) \\
&= \int \left(  \int_{0}^{x} f'(z)T'_{t}(z) \d z   \right)^{2} \d \mu_{0}(x) = \int \left( (1-t) \int_{0}^{x} f' + t \int_{0}^{x} f'T' \right)^{2} \d \mu_{0}, \\
\end{split}
    \end{equation}
    which is clearly convex in $t$.
    Thus, 
    \begin{equation}
    \begin{split}
\int \vert  f_{t} \vert^{2} \d \mu_{t} &\leq (1-t) \int \vert f_{0} \vert^{2} \d \mu_{0} + t \int \vert f_{1} \vert^{2} \d \mu_{1}\\
& \leq (1-t) \int \vert f'_{0} \vert^{2} \d \mu_{0} + t \int \vert f'_{1} \vert^{2} \d \mu_{1} = \int \vert f'_{t} \vert^{2} \d \mu_{t},
\end{split}
    \end{equation}
   where we have used the fact that the integral
    \begin{equation}
    \int \vert f'_{t} \vert^{2} \d \mu_{t} = \int \vert f' \circ T_{t}^{-1} \vert^{2} \d \mu_{t} = \int \vert f' \vert^{2} \d \mu_{0}
    \end{equation}
    remains constant in time $t$. The one-to-one correspondence obtained by the interpolation procedure $f_{t}$ among the test functions supported within each $\supp(\mu_{t})$ then concludes the proof of the first assertion. Given the second assertion is about optimal transport between log-concave measures, the regularity of the velocity required to draw the desired conclusion is furnished by Caffarelli's regularity theory (\cite{Caffarelli90, Caffarelli92}) and approximation. 
\end{proof} 
For higher dimensions, the fact that not all vector fields are conservative, prevents us from finding the function $f_{t}$ satisfying $\grad f_{t} = \grad f \circ T_{t}^{-1}$. Yet, we can derive some partial results.
\begin{prop} \label{prop: oneendgaussian}
The following displacement convex subsets of $\wasser (\R^n)$ lie completely in $S_{n}$.  
    \begin{enumerate}
        \item The collection of all centered Gaussians whose covariance matrix is dominated by the identity,
        $
\{ ( \Sigma^{1/2} )_{\#} \gamma : 0 < \Sigma \leq I_{n \times n} \} =: S_{n,\mathcal{G}}.
        $
        \item Geodesic segments $\{ \mu_{t} : t \in [0,1] \}$, when one endpoint is an even strongly log-concave probability measure and the other is $\gamma$ itself. 
     \end{enumerate}   
     Moreover, inequality \eqref{eq: dispcongaussiannok} holds whenever both endpoints lie in any one of these sets. 
\end{prop}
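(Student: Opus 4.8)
The plan is to check, for each of the two families, the two hypotheses that let us invoke Theorem \ref{thm: 2nconunderconditions}: that the family is displacement convex, and that every one of its members belongs to $S_{n}$. Once both are verified, the displacement interpolation between any two endpoints stays inside $S_{n}$, its velocity field is smooth (explicitly so for Gaussians, and for the second family by Caffarelli's regularity theory \cite{Caffarelli90, Caffarelli92} together with a routine approximation), so $D(\cdot \Vert \gamma)$ is $(2,n)$-convex along it and the concavity \eqref{eq: dispcongaussiannok} follows at once. The whole argument rests on one clean observation, which I would isolate as a lemma: \emph{if $T:\R^{n}\to\R^{n}$ is an odd, $1$-Lipschitz map and $T_{\#}\gamma$ is absolutely continuous with finite second moment, then $T_{\#}\gamma\in S_{n}$.}

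To prove the lemma I would argue as follows. Evenness of $\nu:=T_{\#}\gamma$ is immediate from the oddness of $T$ and evenness of $\gamma$. Given an odd test function $f$, the composition $f\circ T$ is odd, so $\int f\circ T\,\d\gamma=0$ and hence $\int f^{2}\,\d\nu=\var_{\gamma}(f\circ T)$. Applying the standard Gaussian Poincaré inequality (constant $1$) to $f\circ T$ and using the chain rule $\grad(f\circ T)=(\grad T)^{\mathsf T}(\grad f\circ T)$ together with $\|\grad T\|_{\mathrm{op}}\le 1$ yields $\var_{\gamma}(f\circ T)\le\int|\grad(f\circ T)|^{2}\,\d\gamma\le\int|\grad f|^{2}\circ T\,\d\gamma=\int|\grad f|^{2}\,\d\nu$, which is exactly the defining inequality of $S_{n}$.

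For the first family, a centered Gaussian with covariance $0<\Sigma\le I$ is the pushforward $(\Sigma^{1/2})_{\#}\gamma$ of $\gamma$ by the linear (hence odd) map $\Sigma^{1/2}$, and $\Sigma^{1/2}\le I$ makes this map $1$-Lipschitz, so the lemma places it in $S_{n}$. For displacement convexity I would use that the $\wasser$-geodesic between two centered Gaussians is again a centered Gaussian, with covariance $\Sigma_{t}=W_{t}\Sigma_{0}W_{t}$ where $W_{t}=(1-t)I+tA$ and $A>0$ is the optimal linear map; for a fixed unit vector $v$ the quantity $\langle v,\Sigma_{t}v\rangle=|\Sigma_{0}^{1/2}W_{t}v|^{2}$ is the squared norm of an affine function of $t$, hence convex, so $\langle v,\Sigma_{t}v\rangle\le(1-t)\langle v,\Sigma_{0}v\rangle+t\langle v,\Sigma_{1}v\rangle\le 1$, giving $\Sigma_{t}\le I$ (and $\Sigma_{t}>0$) throughout.

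For the second family I would take, without loss of generality, $\mu_{0}=\gamma$ and $\mu_{1}=\mu$ an even strongly log-concave measure, with Brenier map $T=\grad\phi$ from $\gamma$ to $\mu$ (Theorem \ref{thm: Brenier}). Uniqueness of $T$ together with evenness of both measures forces $T$ to be odd, while Caffarelli's contraction theorem \cite{Caffarelli00} gives $0\le\grad T\le I$; hence the interpolating maps $T_{t}=(1-t)I+tT$ satisfy $(1-t)I\le\grad T_{t}\le I$, so each $T_{t}$ is odd and $1$-Lipschitz and $\mu_{t}=(T_{t})_{\#}\gamma$ is absolutely continuous, placing it in $S_{n}$ by the lemma. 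A single geodesic segment is trivially displacement convex, so \eqref{eq: dispcongaussiannok} again follows from Theorem \ref{thm: 2nconunderconditions}. The only genuinely delicate point, and the one I expect to be the main obstacle, is this second family: all the content is in upgrading the endpoint membership (already known from Caffarelli) to membership of \emph{every} interpolant, which is precisely what the uniform bound $\grad T_{t}\le I$ provides, and in ensuring the regularity of $\theta_{t}$ needed to differentiate the entropy twice, which is supplied by Caffarelli's regularity theory and approximation.
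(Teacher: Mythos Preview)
Your proposal is correct and follows essentially the same route as the paper: for the second family you use Caffarelli's contraction theorem to get $1$-Lipschitz transport maps $T_{t}$ and then transfer the Gaussian Poincar\'e inequality along the pushforward, exactly as the paper does. Your explicit lemma (odd $1$-Lipschitz pushforwards of $\gamma$ lie in $S_{n}$) is just the ``standard change of variables argument'' the paper alludes to, written out with the evenness bookkeeping made explicit. For the first family the paper takes a slightly different tack---it checks the Poincar\'e inequality directly on the extremal (linear) functions using convexity of $t\mapsto\mathbb{E}\langle u,X_{t}\rangle^{2}$---whereas you apply your lemma with $T=\Sigma^{1/2}$ and separately verify $\Sigma_{t}\le I$; both are equally short and yield the same conclusion.
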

\begin{proof}
For the first set, use the following facts which are consequences of the Brascamp--Lieb inequality and Theorem \ref{thm: Brenier}.
    \begin{itemize}
        \item Affine functions are the extremal functions in the Poincar\'e inequality for Gaussian measures. 
        \item The collection of the Gaussians considered in the proposition satisfy the Poincar\'e inequality with constant $1$.
        \item The collection of all centered Gaussian measures (that is, all linear images of the standard Gaussian $\gamma$) is a displacement convex set. 
    \end{itemize}
We basically prove the convexity of the Poincar\'e constant in this case. Let $X_{0}, X_{1}$ be random vectors with distributions in the set $S_{n, \mathcal{G}}$. Then their displacement interpolation is represented by a Gaussian random vector $X_{t} = (1-t)X_{0} + t X_{1}$, where $(X_{0},X_{1})$ are optimally coupled.  Suppose $f(x) = \langle x, u \rangle$, for some $u \in \R^n$. Then, 
\begin{equation}
\E f(X_{t})^{2} = \E  \langle u , X_{t} \rangle^{2} \leq (1-t) \E \langle u , X_{0} \rangle^{2} + t  \E \langle u , X_{1} \rangle^{2} \leq \vert u \vert^{2} = \E \vert \grad f (X_{t}) \vert^{2}.
\end{equation}
This completes the proof of the displacement convexity of the set $S_{n, \mathcal{G}}$. The optimal transport map between Gaussian measures is linear (\cite{KnottSmith84}), therefore the velocity field has the regularity needed to deduce the inequality \eqref{eq: dispcongaussiannok} when both endpoints lie in this set. 

For the second set, we will use Caffarelli's contraction theorem (\cite{Caffarelli00}), which says that the optimal map $T$ from $\mu_{0} = \gamma$ to a strongly log-concave measure $\mu_{1}$ is $1$-Lipschitz. Therefore, the $T_{t}$ from Theorem \ref{thm: Brenier} are also $1$-Lipschitz. A standard change of variables argument shows that the Poincar\'e constant of a $1$-Lipschitz image of $\gamma$ can be at most $1$. This shows that all the $\mu_{t}$ in the second part of the proposition have Poincar\'e constants $\leq 1$. We use (approximation and) Caffarelli's regularity theory (\cite{Caffarelli90,Caffarelli92}) again to obtain the required regularity of velocity fields to deduce inequality \eqref{eq: dispcongaussiannok}.

\end{proof}
\section{Applications to functional inequalities} \label{sec: gaussianapp}
In this section, we present some applications of the second part of Proposition \ref{prop: oneendgaussian}, that is, the $(2,n)$-convexity of $D(\mu_{t} \Vert \gamma)$ when one endpoint of the geodesic segment $\{ \mu_{t} \}_{t \in [0,1]}$ is an even strongly log-concave probability measure, and the other is the $\gamma$ itself. More specifically, we will prove improved $(2,n)$ versions to the (a priori $(1, \infty)-$) Gaussian \emph{HWI}, Logarithmic Sobolev, and Talagrand inequalities, for even strongly log-concave probability measures. 

\begin{definition}
   The (relative) \emph{Fisher information} of a measure $\mu$ is defined by,
\begin{equation}
I(\mu \Vert \gamma) = \int \frac{\vert \grad f \vert^{2}}{f} \d \gamma,
\end{equation}
where $f = \frac{\d \mu}{\d \gamma}$.
\end{definition}
As discussed in the Introduction, HWI inequalities relate the three quantities that its initials correspond to. The main idea is to do calculus on the relative entropy $D(\mu_{t} \Vert \gamma)$ along the displacement interpolation between $\mu$ and $\gamma$, using the convexity properties of relative entropy. This accounts for the ``H'' and the ``W''. The Fisher information $I(\mu \Vert \gamma)$ appears  because it plays the role of the squared norm of the gradient of $D(\cdot \Vert \gamma)$ under the previously mentioned Riemannian-like structure on $\wasser(\R^n)$. This makes the Fisher information a natural candidate to bound the derivative of relative entropy along any given (unit) tangent. To the interested reader who is unfamiliar with the idea outlined in this paragraph, we recommend the clear heuristic treatment of the $(K,\infty)$-case in \cite[Section 3]{OttoVillani00}. Logarithmic Sobolev and Talagrand inequalities (which relate $H,I$, and $W,I$, respectively) follow from a suitable HWI inequality. 

We follow the footsteps of \cite[Section 3.4]{ErbarKuwadaSturm15} where the authors derive $(\kappa, n)$-versions of these inequalities from $(\kappa,n)$-convexity of relative entropy in a very general setup. While a global $(\kappa,n)$-convexity condition is assumed in \cite{ErbarKuwadaSturm15}, it is easily seen from the proofs that the requirement is only on the interpolations connecting a measure $\mu$ (for which the inequalities are to be proven) to the reference measure ($\gamma$ in our case). Therefore, for completeness, we will only sketch the main steps in \cite{ErbarKuwadaSturm15} adapted to our setup. The reader is referred to \cite{ErbarKuwadaSturm15} for more details.

To begin with, we decipher what the local condition of $(2,n)$-convexity means globally. Towards this, we need some definitions. 
\begin{definition}\cite[Definition 2.1]{ErbarKuwadaSturm15}
  For $\theta \geq 0$, we define the following functions.
  \begin{equation}
\begin{split}
& \mathfrak{s} (\theta) = \mathfrak{s}_{2/n} (\theta) = \sqrt{\frac{n}{2}} \sin \left( \sqrt{\frac{2}{n}}  \theta \right), \\
& \cc (\theta) = \cc_{2/n} (\theta) = \cos \left(  \sqrt{\frac{2}{n}}  \theta  \right).\\
\end{split}
  \end{equation}
  For $t \in [0,1]$, we set
  \begin{equation}
\sigma^{(t)}(\theta) = \sigma_{2/n}^{(t)}(\theta) = 
\begin{cases}
    \frac{\s (t \theta)}{\s (\theta)}, & \textnormal{if } \theta < \sqrt{\frac{n}{2}} \pi, \\
    \infty, &\textnormal{ otherwise.}\\
\end{cases}
  \end{equation}
\end{definition}
In terms of the coefficients $\sigma^{(t)}$ we may write a global condition for $(2, n)$-convexity.
\begin{lem}\cite[Lemma 2.2]{ErbarKuwadaSturm15} \label{lem: 2ncon}
The $(2,n)$-convexity of $D(\mu_{t} \Vert \gamma)$ implies that, 
\begin{equation} \label{eq: 2ncon}
e^{-\frac{D(\mu_{t} \Vert \gamma)}{n}} \geq \sigma^{(1-t)} (\wasser(\mu_{0}, \mu_{1})) e^{-\frac{D(\mu_{0} \Vert \gamma)}{n}} + \sigma^{(t)} (\wasser(\mu_{0}, \mu_{1})) e^{-\frac{D(\mu_{1} \Vert \gamma)}{n}}.
\end{equation}
\end{lem}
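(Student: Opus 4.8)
The plan is to reduce the statement to a one-dimensional comparison principle for a linear ODE, in the spirit of \cite[Lemma 2.2]{ErbarKuwadaSturm15}. Write $S(t) = D(\mu_{t} \Vert \gamma)$ and put $d = \wasser(\mu_{0}, \mu_{1})$, which is constant along the constant-speed geodesic $\{\mu_{t}\}$. By Definition \ref{def: knconvexity} with $\kappa = 2$ and $1/a = n$, the hypothesis of $(2,n)$-convexity reads
\[
S''(t) \geq 2 d^{2} + \frac{1}{n}\bigl(S'(t)\bigr)^{2}
\]
in the distributional sense. First I would pass to the quantity $g(t) = e^{-S(t)/n}$ appearing in \eqref{eq: 2ncon}. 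A direct computation gives $g'' = -\tfrac{1}{n} e^{-S/n}\bigl(S'' - \tfrac{1}{n}(S')^{2}\bigr)$, so the convexity inequality becomes the clean linear differential inequality
\[
g''(t) + \omega^{2} g(t) \leq 0, \qquad \omega := \sqrt{\tfrac{2}{n}}\, d,
\]
i.e. $g$ is a (distributional) supersolution of $g'' + \omega^{2} g = 0$.

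Next I would compare $g$ with the genuine solution $h$ of $h'' + \omega^{2} h = 0$ carrying the same boundary data $h(0) = g(0)$, $h(1) = g(1)$. Provided $\omega < \pi$, equivalently $d < \sqrt{n/2}\,\pi$, this solution is
\[
h(t) = \frac{\sin(\omega(1-t))}{\sin \omega}\, g(0) + \frac{\sin(\omega t)}{\sin \omega}\, g(1) = \sigma^{(1-t)}(d)\, g(0) + \sigma^{(t)}(d)\, g(1),
\]
the last equality being precisely the definitions of $\s$ and $\sigma^{(t)}$, since $\s(s d)/\s(d) = \sin(\omega s)/\sin\omega$. Thus the assertion \eqref{eq: 2ncon} is equivalent to the pointwise inequality $g \geq h$ on $[0,1]$.

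To establish $g \geq h$ I would invoke a Sturm-type weighted maximum principle, for which the condition $\omega < \pi$ is essential. Setting $w = g - h$ one has $w(0) = w(1) = 0$ and $w'' + \omega^{2} w \leq 0$. Since $\omega < \pi$ I can pick $\delta > 0$ with $\omega(1+\delta) < \pi$ and take the strictly positive solution $\phi(t) = \sin(\omega(t+\delta))$ of $\phi'' + \omega^{2}\phi = 0$ on $[0,1]$. Writing $v = w/\phi$, the identity $(\phi^{2} v')' = \phi\,(w'' + \omega^{2} w) \leq 0$ shows that $\phi^{2} v'$ is non-increasing; together with $v(0) = v(1) = 0$ this forces $v \geq 0$ (if $v$ had an interior negative minimum at $t_{0}$ then $\phi^{2} v' \geq 0$ on $[0,t_{0}]$, whence $v(t_{0}) \geq v(0) = 0$, a contradiction), and hence $g \geq h$. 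Finally, the excluded regime $d \geq \sqrt{n/2}\,\pi$ does not actually occur along geodesics of finite entropy: there $g = e^{-S/n} > 0$ is strictly positive, and testing $g'' + \omega^{2} g \le 0$ against $\sin(\pi t)$ via integration by parts shows that $\omega \geq \pi$ would force $g \equiv 0$, so necessarily $\omega < \pi$.

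The main obstacle is regularity, not the comparison itself: the convexity is assumed only distributionally, so $g$ is a priori merely continuous and the maximum-principle step must be carried out for distributional supersolutions rather than classical ones. This is standard for (semi)convex functions along geodesics—one regularizes, or works with one-sided derivatives of the semiconcave function $g$—and is handled exactly as in \cite{ErbarKuwadaSturm15}, to which I would refer for these details.
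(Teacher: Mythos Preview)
Your proposal is correct and follows essentially the same route as the paper's brief proof outline: pass from $S(t)=D(\mu_t\Vert\gamma)$ to $g(t)=e^{-S(t)/n}$, rewrite $(2,n)$-convexity as the differential inequality $g''+\tfrac{2}{n}\wasser^{2}(\mu_0,\mu_1)\,g\le 0$, observe that the right-hand side of \eqref{eq: 2ncon} solves the corresponding ODE with matching boundary values, and conclude by a maximum principle. You simply fill in the details (the Sturm comparison, the degenerate case $\omega\ge\pi$, and the distributional-regularity caveat) that the paper leaves implicit by citing \cite{ErbarKuwadaSturm15}.
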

\begin{proof}[Proof outline]
    Let $u(t) = e^{-\frac{D(\mu_{t} \Vert \gamma)}{n}}$ let $v(t)$ denote the RHS in the above inequality. The $(2,n)$-convexity of $D(\mu_{t} \Vert \gamma)$ is equivalent to the condition $u''(t) \leq - \frac{2}{n} \wasser^{2}(\mu_{0}, \mu_{1}) u(t)$. On the other hand, $v(t)$ satisfies $v''(t) =  - \frac{2}{n} \wasser^{2}(\mu_{0}, \mu_{1}) v(t)$. The proof concludes by a maximum principle.
\end{proof}
\begin{rem}
 Lemma \ref{lem: 2ncon} forces $\wasser^{2} (\mu, \gamma) \leq \frac{n}{2} \pi^{2}$ for all even strongly log-concave probability measures because the coefficients $\sigma^{(t)}(\cdot)$ blow up at $\sqrt{\frac{n}{2}} \pi$. However, this is not alarming since the stronger bound  $\wasser^{2} (\mu, \gamma) \leq n$ can be obtained by a straightforward calculation using Caffarelli's contraction theorem and the Gaussian Poincar\'e inequality.
\end{rem}
Using Lemma \ref{lem: 2ncon}, we obtain Theorem \ref{thm: HWIintro}, which is restated below. 
 
\begin{thm}[A $(2,n)$-HWI inequality]
    Let $\mu$ be an even strongly log-concave probability measure on $\R^n$, then,
    \begin{equation} \label{eq: hwi*}
e^{\frac{D(\mu_{0} \Vert \gamma)}{n}- \frac{D(\mu_{1} \Vert \gamma)}{n}}
\leq \cc ( \wasser (\mu_{0}, \mu_{1})) + \frac{1}{n} \s ( \wasser(\mu_{0}, \mu_{1})) \sqrt{I(\mu_{0} \Vert \gamma)},
    \end{equation}
    where $\{ \mu_{0} , \mu_{1} \} = \{ \mu , \gamma \}$.

    In particular,
    \begin{equation} \label{eq: hwi}
e^{\frac{D(\mu \Vert \gamma)}{n}} \leq \cc (\wasser(\mu, \gamma)) + \frac{1}{n} \s(\wasser(\mu, \gamma)) \sqrt{I(\mu \Vert \gamma)}.
    \end{equation}
\end{thm}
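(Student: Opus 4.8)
The plan is to extract the HWI inequality by differentiating the global $(2,n)$-convexity inequality \eqref{eq: 2ncon} at the endpoint $t=0$. By the second part of Proposition \ref{prop: oneendgaussian}, the displacement interpolation $\{\mu_t\}$ joining the even strongly log-concave measure to $\gamma$ lies entirely in $S_n$, so Lemma \ref{lem: 2ncon} applies verbatim to either assignment $\{\mu_0,\mu_1\}=\{\mu,\gamma\}$; this is why it suffices to prove \eqref{eq: hwi*} for an arbitrary ordering and then recover \eqref{eq: hwi} as the special case $\mu_0=\mu$, $\mu_1=\gamma$ (for which $D(\mu_1\Vert\gamma)=0$). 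First I would set $u(t)=e^{-D(\mu_t\Vert\gamma)/n}$ and write $W=\wasser(\mu_0,\mu_1)$. Since $\sigma^{(1)}(W)=\s(W)/\s(W)=1$ and $\sigma^{(0)}(W)=0$, inequality \eqref{eq: 2ncon} holds with equality at $t=0$. Hence the nonnegative function $g(t)=u(t)-\sigma^{(1-t)}(W)u(0)-\sigma^{(t)}(W)u(1)$ vanishes at $t=0$, and its right derivative satisfies $g'(0^+)\geq 0$, i.e. $u'(0)\geq \frac{\d}{\d t}\big[\sigma^{(1-t)}(W)u(0)+\sigma^{(t)}(W)u(1)\big]_{t=0}$.

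The coefficient derivatives are immediate from $\s'(\theta)=\c(\theta)$: one gets $\frac{\d}{\d t}\sigma^{(t)}(W)|_{t=0}=W/\s(W)$ and $\frac{\d}{\d t}\sigma^{(1-t)}(W)|_{t=0}=-W\c(W)/\s(W)$, so writing $D'(0)=\frac{\d}{\d t}D(\mu_t\Vert\gamma)|_{t=0}$ the differentiated inequality reads
\[
-\tfrac{1}{n}D'(0)\,u(0)\;\geq\;-\frac{W\c(W)}{\s(W)}u(0)+\frac{W}{\s(W)}u(1).
\]
Dividing by $u(0)>0$, multiplying by $\s(W)/W$ (legitimate since $W>0$ and $\s(W)>0$, because $W\leq\sqrt{n}<\sqrt{n/2}\,\pi$ by the bound $\wasser^2(\mu,\gamma)\leq n$), and recognizing $u(1)/u(0)=e^{(D(\mu_0\Vert\gamma)-D(\mu_1\Vert\gamma))/n}$ yields
\[
e^{\frac{D(\mu_0\Vert\gamma)-D(\mu_1\Vert\gamma)}{n}}\;\leq\;\c(W)-\frac{\s(W)}{nW}D'(0).
\]

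The remaining step is to bound $-D'(0)$ by $W\sqrt{I(\mu_0\Vert\gamma)}$. Using the first-variation form of Lemma \ref{lem: derivativesofentropy} I would write $D'(0)=\int\langle\grad\log f,\grad\theta_0\rangle\,\d\mu_0$, where $f=\frac{\d\mu_0}{\d\gamma}$ and $\grad\theta_0$ is the initial velocity field, and then apply Cauchy--Schwarz:
\[
|D'(0)|\leq\Big(\int|\grad\log f|^2\,\d\mu_0\Big)^{1/2}\Big(\int|\grad\theta_0|^2\,\d\mu_0\Big)^{1/2}=\sqrt{I(\mu_0\Vert\gamma)}\cdot W.
\]
Here the first factor is the Fisher information via the identity $\int|\grad\log f|^2 f\,\d\gamma=\int\frac{|\grad f|^2}{f}\,\d\gamma$, and the second factor equals $W$ because a constant-speed geodesic has initial kinetic energy equal to the squared Wasserstein distance. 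Substituting $-D'(0)\leq W\sqrt{I(\mu_0\Vert\gamma)}$ into the previous display produces exactly $\c(W)+\frac{1}{n}\s(W)\sqrt{I(\mu_0\Vert\gamma)}$, which is \eqref{eq: hwi*}.

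The hard part will be the rigorous justification of this last step: that $t\mapsto D(\mu_t\Vert\gamma)$ admits the stated one-sided derivative at $t=0$ with the first-variation formula, that the velocity field $\grad\theta_0$ is regular enough for the Cauchy--Schwarz manipulation, and that the squared gradient norm of relative entropy is indeed the Fisher information (the Otto-calculus heuristic). As elsewhere in the paper, the strong log-concavity of $\mu$ together with Caffarelli's contraction and regularity theory supplies the needed smoothness and Lipschitz control, and I would make the differentiation precise by following the corresponding arguments of \cite{ErbarKuwadaSturm15}.
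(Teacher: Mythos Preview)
Your proposal is correct and follows essentially the same approach as the paper's proof: differentiate the global $(2,n)$-convexity inequality \eqref{eq: 2ncon} at $t=0$ (after noting equality there), then bound $-D'(0)$ by $W\sqrt{I(\mu_0\Vert\gamma)}$ via the first-variation formula and Cauchy--Schwarz. Your explicit computation of the $\sigma^{(t)}$-derivatives and your justification that $\s(W)>0$ (via $W\le\sqrt{n}<\sqrt{n/2}\,\pi$) are spelled out in slightly more detail than in the paper, but the argument is the same.
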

\begin{proof}[Proof outline]
    When $\{ \mu_{0}, \mu_{1} \} = \{ \mu , \gamma \}$, the relative entropy $D(\mu_{t} \Vert \gamma)$ is $(2,n)$-convex by Proposition \ref{prop: oneendgaussian} and Theorem \ref{thm: 2nconunderconditions}. Therefore, Equation \eqref{eq: 2ncon} holds. Now subtracting $e^{-\frac{D(\mu_{0} \Vert \gamma)}{n}}$ from both sides of Equation \eqref{eq: 2ncon}, dividing both sides by $t$, and letting $t \to 0$, we obtain
    \begin{equation}
e^{- \frac{D(\mu_{1} \Vert \gamma)}{n}} \leq \cc (\wasser (\mu_{0}, \mu_{1})) \cdot e^{- \frac{D(\mu_{0} \Vert \gamma)}{n}} + \frac{\s (\wasser(\mu_{0}, \mu_{1}))}{\wasser (\mu_{0}, \mu_{1})}  \left. \frac{\d}{\d t} e^{- \frac{D(\mu_{t} \Vert \gamma)}{n}} \right|_{t=0}.
    \end{equation}
    Observing that
    \begin{equation}
    \begin{split}
\left. \frac{\d}{\d t} e^{- \frac{D(\mu_{t} \Vert \gamma)}{n}} \right|_{t=0} &= \frac{1}{n} e^{- \frac{D(\mu_{0} \Vert \gamma)}{n}} \frac{\d }{\d t} \left. \left( - D(\mu_{t} \Vert \gamma) \right) \right|_{t=0} \\
&\leq \frac{1}{n} e^{- \frac{D(\mu_{0} \Vert \gamma)}{n}} \sqrt{I(\mu_{0} \Vert \gamma)} \wasser(\mu_{0}, \mu_{1}) \\
\end{split}
    \end{equation}
    completes the proof of the first inequality. Here we have used a piece from the computation \eqref{eq: firstderivativeentropycomp},
    \begin{equation}
    \begin{split}
\frac{\d }{\d t} \left. \left( - D(\mu_{t} \Vert \gamma) \right) \right|_{t=0} &= \int L \theta_{0} \d \mu_{0} = - \int \langle \grad \log f, \grad \theta_{0} \rangle \d \mu_{0} \\
&\leq \left( \int \vert \grad \log f \vert^{2} \d \mu_{0}  \right)^{1/2} \left( \int \vert \grad \theta_{0} \vert^{2} \d \mu_{0} \right)^{1/2} \\
&= \sqrt{I(\mu_{0} \Vert \gamma)} \wasser(\mu_{0}, \mu_{1}),
\end{split}
    \end{equation}
    where $f = \frac{\d \mu_{0}}{\d \gamma}$ and $\grad \theta_{t}$ denotes the velocity field driving the displacement interpolation from $\mu_{0}$ to $\mu_{1}$.
    
    The second inequality is obtained simply by setting $\mu_{0} = \mu, \mu_{1} = \gamma$.
\end{proof}

As immediate corollaries, dimensional improvements of the Gaussian Logarithmic Sobolev and Gaussian Talagrand inequalities are obtained for even strongly log-concave measures.
 \begin{cor}[A Logarithmic Sobolev inequality] \label{cor: logsobolev}
     Let $\mu$ be an even strongly log-concave probability measure on $\R^n$. Then, 
     \begin{equation} \label{eq: logsobolev}
4 D(\mu \Vert \gamma) \leq 2n \left( e^{\frac{2}{n}D(\mu \Vert \gamma)} - 1 \right) \leq I( \mu \Vert \gamma).
        \end{equation}
 \end{cor}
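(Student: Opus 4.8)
The plan is to derive both inequalities as elementary consequences of the HWI inequality \eqref{eq: hwi}, treating the two halves separately. Throughout I abbreviate $D = D(\mu \Vert \gamma)$, $W = \wasser(\mu, \gamma)$, and $I = I(\mu \Vert \gamma)$, and I recall that $D \geq 0$ always holds for relative entropy. Unwinding the definitions of $\s$ and $\c$, the inequality \eqref{eq: hwi} reads
\[
e^{D/n} \leq \cos\!\left( \sqrt{\tfrac{2}{n}}\, W \right) + \frac{1}{\sqrt{2n}} \sin\!\left( \sqrt{\tfrac{2}{n}}\, W \right) \sqrt{I}.
\]

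For the right-hand inequality $2n\bigl(e^{2D/n} - 1\bigr) \leq I$, the key move is to eliminate the (a priori unknown) Wasserstein distance $W$ using the elementary amplitude bound $a \cos\beta + b \sin\beta \leq \sqrt{a^2 + b^2}$, which I would apply with $\beta = \sqrt{2/n}\, W$, $a = 1$, and $b = \sqrt{I/(2n)}$. Since the right-hand side of the displayed inequality is exactly of this form, I obtain $e^{D/n} \leq \sqrt{1 + \tfrac{I}{2n}}$. Both sides being nonnegative, I may square to get $e^{2D/n} \leq 1 + \tfrac{I}{2n}$, and rearranging gives precisely $2n\bigl(e^{2D/n}-1\bigr) \leq I$.

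For the left-hand inequality $4D \leq 2n\bigl(e^{2D/n}-1\bigr)$, I would substitute $x = \tfrac{2D}{n} \geq 0$, so that $4D = 2nx$ and the claim reduces to $x \leq e^{x} - 1$; this is just the convexity estimate $e^{x} \geq 1 + x$, valid for all real $x$. Chaining the two steps then yields the full display $4D \leq 2n\bigl(e^{2D/n}-1\bigr) \leq I$.

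I do not expect a genuinely difficult step here: once the $(2,n)$-HWI inequality \eqref{eq: hwi} is in hand (which is where the real work of Proposition \ref{prop: oneendgaussian} and Theorem \ref{thm: 2nconunderconditions} lives), the only conceptual ingredient is the elimination of $W$ via the amplitude bound, which is the standard mechanism by which an HWI inequality implies a logarithmic Sobolev inequality. The one point worth emphasizing is where the improvement over Gross' classical bound $2D \leq I$ comes from: it is the coefficient $\tfrac{1}{2n}$ produced by the dimensional $(2,n)$-refinement rather than the $(1,\infty)$ version, and the factor-of-two gain is made visible by the left inequality, since $2n\bigl(e^{2D/n}-1\bigr) \geq 4D = 2\cdot(2D)$.
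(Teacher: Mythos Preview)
Your proof is correct and follows essentially the same approach as the paper: both derive the right-hand inequality from the HWI inequality \eqref{eq: hwi} by eliminating $W$ with a trigonometric bound and then obtain the left-hand inequality from $e^{x}\ge 1+x$. Your use of the amplitude bound $a\cos\beta + b\sin\beta \le \sqrt{a^{2}+b^{2}}$ is a slightly cleaner one-line packaging than the paper's route of squaring \eqref{eq: hwi} and applying $2ab \le 2a^{2}+\tfrac{1}{2}b^{2}$ together with $\c^{2}+\tfrac{2}{n}\s^{2}=1$, but the two manipulations are equivalent.
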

\begin{proof}
We square both sides of Equation \eqref{eq: hwi} to obtain 
\begin{equation}
e^{\frac{2}{n}D(\mu \Vert \gamma)} \leq \cc(\wasser(\mu, \gamma))^{2} + \frac{2}{n} \s(\wasser(\mu, \gamma)) \cc (\wasser(\mu, \gamma)) \sqrt{I(\mu \Vert \gamma)} + \frac{1}{n^2} \s (\wasser(\mu, \gamma))^{2} I(\mu \vert \gamma).
\end{equation}
The inequality $2 ab \leq 2 a^{2} + \frac{1}{2}b^{2}$ applied to $a = \mathfrak{s} / \sqrt{n}, b = \mathfrak{c} \sqrt{I(\mu \Vert \gamma)/n}$, gives
\begin{equation}
e^{\frac{2}{n}D(\mu \Vert \gamma)} \leq \left( \cc (\wasser(\mu, \gamma))^{2} + \frac{2}{n} \s (\wasser(\mu, \gamma))^{2} \right) \left(1 + \frac{1}{2n} I(\mu \Vert \gamma) \right).
\end{equation}
 Now the identity $\cc^{2} + \frac{2}{n} \s^{2} = 1$ allows us to conclude the second inequality in the assertion. The first inequality, which compares our result with a Logarithmic Sobolev inequality in a more standard form, follows from the inequality $e^x -1 \ge x$.  
\end{proof}
\begin{rem}
    The Gaussian measure satisfies a Logarithmic Sobolev inequality with constant $1$, that is, $D(\mu \Vert \gamma) \leq \frac{C}{2} I(\mu \Vert \gamma)$, $C=1$, without symmetry or concavity assumptions on $\mu$. 
    Moreover, Bobkov, Gozlan, Roberto and  Samson's result from \cite{BobkovGozlanRobertoSamson14} implies that if $\mu$ is strongly log-concave and centered, we already have 
    $ n \left( e^{\frac{2}{n}D(\mu \Vert \gamma)} - 1 \right) \leq I( \mu \Vert \gamma) $ -- See Corollary  2.2 there and note that indeed $b\le 1$ in this setting by Poincar\'e. Corollary \ref{cor: logsobolev} says that, for even strongly log-concave measures, both of these results improve by a factor of $2$. It is well known that the Poincar\'e constant of the Gaussian measure improves to $1/2$ for even functions. Given the relationship between Logarithmic Sobolev inequalities (stronger) and Poincar\'e inequalities (weaker), one may wonder if Corollary \ref{cor: logsobolev} can be strengthened to hold for all even measures. However, this is not the case; the improvement by a factor of $2$ in the Gaussian Poincar\'e inequality for even functions cannot be lifted to the same improvement in the Logarithmic Sobolev inequality for even measures without further assumptions, as the following example shows. 
    \end{rem}
    \begin{exmpl} \label{exmpl: betterlsineedsslc}
    Let $\mu$ denote the distribution of $tZ$, where $Z$ has the distribution of standard Gaussian in $\R$. Then, all quantities involved can be explicitly computed,
    \begin{equation}
    D(\mu \Vert \gamma) = - \log t + \frac{t^2}{2} - \frac{1}{2}, \textnormal{ and }
    I(\mu \Vert \gamma) = \frac{(t^2 - 1)^2}{t^2}.
    \end{equation}
As $t \mapsto \infty$, we have $\frac{4 D(\mu \Vert \gamma)}{t^{2}} \to 2$ but $\frac{I(\mu \Vert \gamma)}{t^{2}} \to 1$. On the other hand, taking the limit $t \to 1$ shows that our inequality is sharp. 
    \end{exmpl}
The improvement in the Logarithmic Sobolev inequality for even strongly log-concave measures also reflects in the convergence rate to equilibrium. 
\begin{cor}
    Let $P^{\ast}_{t} \mu$ denote the Ornstein-Uhlenbeck evolution of an even strongly log-concave measure $\mu = \mu_{0}$, that is, $\d P^{\ast}_{t} \mu = P_{t} \left( \frac{\d \mu}{\d \gamma} \right) \d \gamma$. Here $P_{t}$ denotes the Ornstein-Uhlenbeck semigroup. Then,
    \begin{equation}
    D(P^{\ast}_{t} \mu \Vert \gamma ) \leq e^{-4t} D(\mu_{0} \Vert \gamma).
    \end{equation}
\end{cor}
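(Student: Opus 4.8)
The plan is to run a Gr\"onwall argument on the relative entropy along the Ornstein--Uhlenbeck flow, feeding in the logarithmic Sobolev inequality of Corollary \ref{cor: logsobolev} as the infinitesimal input. Set $\Phi(t) = D(P^{\ast}_{t}\mu \Vert \gamma)$. By the very definition $I(\cdot \Vert \gamma) = -\frac{\d}{\d s} D(P^{\ast}_{s}\,\cdot \Vert \gamma)\big|_{s=0}$ together with the semigroup property $P^{\ast}_{t+s} = P^{\ast}_{s} P^{\ast}_{t}$, one obtains the de Bruijn-type identity
\[
\Phi'(t) = -\,I(P^{\ast}_{t}\mu \Vert \gamma).
\]
If the logarithmic Sobolev inequality $4\,D(P^{\ast}_{t}\mu \Vert \gamma) \leq I(P^{\ast}_{t}\mu \Vert \gamma)$ holds at \emph{every} time $t \geq 0$, then $\Phi'(t) \leq -4\,\Phi(t)$, and integrating this differential inequality yields $\Phi(t) \leq e^{-4t}\Phi(0)$, which is exactly the claim.

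The content of the argument is therefore to check that the trajectory $\{P^{\ast}_{t}\mu\}_{t \geq 0}$ stays inside the class of even strongly log-concave measures, so that Corollary \ref{cor: logsobolev} is applicable along the whole flow. Writing $g = \frac{\d \mu}{\d \gamma}$, the density of $P^{\ast}_{t}\mu$ with respect to $\gamma$ is $P_{t}g$, given by Mehler's formula
\[
P_{t}g(x) = \int g\!\left(e^{-t}x + \sqrt{1-e^{-2t}}\,y\right) \d\gamma(y).
\]
Evenness is immediate: if $g$ is even then the change $y \mapsto -y$ under the even measure $\gamma$ shows $P_{t}g(-x) = P_{t}g(x)$. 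For strong log-concavity I would argue that $P_{t}$ preserves log-concavity of $g$ via Pr\'ekopa--Leindler. Indeed, since $g$ is log-concave, the function $(x,y) \mapsto g\!\left(e^{-t}x + \sqrt{1-e^{-2t}}\,y\right) e^{-|y|^{2}/2}$ is log-concave on $\R^{n}\times\R^{n}$, being the product of the log-concave function $g$ composed with an affine map and the log-concave Gaussian weight. The marginal in $x$ of a log-concave function being log-concave (the Pr\'ekopa--Leindler inequality), integrating out $y$ shows $P_{t}g$ is log-concave. Hence $P^{\ast}_{t}\mu$ is even and strongly log-concave for all $t \geq 0$.

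Combining the two ingredients completes the proof: the preservation step places each $P^{\ast}_{t}\mu$ in the hypothesis class of Corollary \ref{cor: logsobolev}, the inequality $4\Phi(t) \leq I(P^{\ast}_{t}\mu\Vert\gamma)$ then gives $\Phi'(t) \leq -4\Phi(t)$, and Gr\"onwall's inequality delivers $D(P^{\ast}_{t}\mu \Vert \gamma) \leq e^{-4t} D(\mu_{0} \Vert \gamma)$. The main obstacle I anticipate is the preservation of strong log-concavity under the flow, which is the step genuinely requiring an argument (rather than the convergence rate itself, which is purely Gr\"onwall once log-Sobolev is known along the trajectory); a secondary technical point is justifying enough regularity of $t \mapsto \Phi(t)$ and the validity of the de Bruijn identity along the flow, which can be handled by the standard smoothing properties of the Ornstein--Uhlenbeck semigroup or by an approximation argument.
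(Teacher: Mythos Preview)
Your proposal is correct and follows essentially the same approach as the paper's proof sketch: the de Bruijn identity $\frac{\d}{\d t}D(P^{\ast}_{t}\mu\Vert\gamma)=-I(P^{\ast}_{t}\mu\Vert\gamma)$ combined with the log-Sobolev inequality of Corollary~\ref{cor: logsobolev} along the flow, once one checks that the Ornstein--Uhlenbeck semigroup preserves evenness and log-concavity. You have simply supplied the details (Mehler's formula and Pr\'ekopa--Leindler) that the paper leaves to the reader.
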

\begin{proof}[Proof sketch]
    The proof is a standard argument using the facts that $\frac{\d}{\d t}   D(P^{\ast}_{t} \mu \Vert \gamma ) = -   I(P^{\ast}_{t} \mu \Vert \gamma )$, and that the Ornstein-Uhlenbeck semigroup $P_{t}$ (which acts on functions) preserves the class of even functions as well as the class of log-concave functions. 
\end{proof}
\begin{rem}
    The previous proof uses only the dimension-free part $4D(\mu \Vert \gamma) \leq I(\mu \Vert \gamma)$ in Corollary \ref{cor: logsobolev}. One can use the dimensional dependence available. However, this really improves the inequality only for short times $t$. Long-term behavior seems to be dominated by the ``curvature'' aspect. The interested reader may also consult \cite{BakryBolleyGentil12}. 
\end{rem}

We conclude this section with another typical application of the HWI inequality. 
\begin{cor}[A Talagrand inequality] \label{cor: talagrand}
    Let $\mu$ be an even strongly log-concave probability measure on $\R^n$. Then,
        \begin{equation} \label{eq: talagrand}
\wasser^{2} (\mu , \gamma) \leq - n \log \cos \left( \sqrt{\frac{2}{n}} \wasser(\mu, \gamma) \right) \leq D(\mu \Vert \gamma).
        \end{equation}
\end{cor}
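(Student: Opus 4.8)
The plan is to deduce both inequalities in \eqref{eq: talagrand} from the HWI inequality \eqref{eq: hwi*}, choosing the orientation of the geodesic that annihilates the Fisher-information term. Concretely, I would apply \eqref{eq: hwi*} with $\mu_{0} = \gamma$ and $\mu_{1} = \mu$. Since $\gamma$ is the equilibrium measure, $D(\gamma \Vert \gamma) = 0$ and, crucially, $I(\gamma \Vert \gamma) = 0$, so the square-root term on the right-hand side disappears entirely. The left-hand exponent becomes $\frac{D(\gamma \Vert \gamma)}{n} - \frac{D(\mu \Vert \gamma)}{n} = - \frac{D(\mu \Vert \gamma)}{n}$, while the right-hand side collapses to $\c(\wasser(\mu, \gamma)) = \cos\left( \sqrt{\tfrac{2}{n}}\, \wasser(\mu, \gamma) \right)$. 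This yields the single clean inequality $e^{-D(\mu \Vert \gamma)/n} \leq \cos\left( \sqrt{\tfrac{2}{n}}\, \wasser(\mu, \gamma) \right)$, from which everything else follows by elementary manipulations. Note that the hypothesis of \eqref{eq: hwi*}, namely that $\{\mu_{0}, \mu_{1}\} = \{\mu, \gamma\}$ with $\mu$ even strongly log-concave, is exactly what we are given.

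For the second (rightmost) inequality in \eqref{eq: talagrand}, I would take logarithms of the inequality just obtained. This step is legitimate provided the cosine is strictly positive, which I address below; granting that, $- \frac{D(\mu \Vert \gamma)}{n} \leq \log \cos\left( \sqrt{\tfrac{2}{n}}\, \wasser(\mu, \gamma) \right)$, and multiplying through by $-n$ gives precisely $-n \log \cos\left( \sqrt{\tfrac{2}{n}}\, \wasser(\mu, \gamma) \right) \leq D(\mu \Vert \gamma)$.

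For the first (leftmost) inequality, I would set $\theta = \sqrt{\tfrac{2}{n}}\, \wasser(\mu, \gamma)$, so that $\wasser^{2}(\mu, \gamma) = \tfrac{n}{2}\theta^{2}$, and observe that the claim $\wasser^{2}(\mu, \gamma) \leq -n \log \cos\left( \sqrt{\tfrac{2}{n}}\, \wasser(\mu, \gamma) \right)$ is equivalent to the purely scalar inequality $\tfrac{\theta^{2}}{2} \leq -\log \cos \theta$ on $[0, \pi/2)$. I would verify this by setting $g(\theta) = -\log \cos \theta - \tfrac{\theta^{2}}{2}$, noting $g(0) = 0$ and $g'(\theta) = \tan \theta - \theta \geq 0$ on $[0, \pi/2)$, whence $g \geq 0$ throughout that interval. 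This is the only genuine computation in the argument, and it is routine.

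The single point requiring care — and the only real obstacle — is ensuring that $\theta = \sqrt{\tfrac{2}{n}}\, \wasser(\mu, \gamma)$ lies in $[0, \pi/2)$, so that $\cos \theta > 0$ (making the logarithm well defined) and the scalar inequality above applies. This is supplied by the a priori bound $\wasser^{2}(\mu, \gamma) \leq n$ recorded in the remark following Lemma \ref{lem: 2ncon} (itself a consequence of Caffarelli's contraction theorem together with the Gaussian Poincaré inequality): it gives $\theta \leq \sqrt{2} < \pi/2$, keeping us safely within the admissible range and legitimizing both the logarithm and the monotonicity argument.
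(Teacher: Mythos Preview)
Your proof is correct and follows exactly the paper's approach: plug $\mu_{0}=\gamma$, $\mu_{1}=\mu$ into the HWI inequality \eqref{eq: hwi*} so that $I(\mu_{0}\Vert\gamma)=0$, then take logarithms. You supply more detail than the paper does---in particular the scalar inequality $\tfrac{\theta^{2}}{2}\le -\log\cos\theta$ via $g'(\theta)=\tan\theta-\theta\ge0$, and the range check $\theta\le\sqrt{2}<\pi/2$ from the remark after Lemma~\ref{lem: 2ncon}---but these are exactly the elementary manipulations the paper leaves implicit.
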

\begin{proof}
    This follows from plugging in $\gamma = \mu_{0}$ and $\mu = \mu_{1}$ in the HWI Inequality \eqref{eq: hwi*}, which makes $I(\mu_{0})=0$. 
\end{proof}
\begin{rem}
    The Gaussian measure satisfies $\wasser^{2} (\mu, \gamma) \leq 2 D(\mu \Vert \gamma)$ for all probability measures $\mu$. Since the Gaussian Talagrand inequality is also stronger than the Poincar\'e inequality (\cite[Section 7]{OttoVillani00}), one may again consider the possibility that Corollary \ref{cor: talagrand} holds for all even measures. However, again, this is not the case. The example presented in the case of the Logarithmic Sobolev inequality serves us here as well. 
    \end{rem}
    \begin{exmpl} \label{exmpl: bettertalagrandneedsslc}
   Let $\mu$ denote the distribution of $tZ$, where $Z$ has the distribution of standard Gaussian in $\R$. Then,
    \begin{equation}
     \wasser^{2} (\mu , \gamma) = (t-1)^2, \textnormal{ and }D(\mu \Vert \gamma) = - \log t + \frac{t^2}{2} - \frac{1}{2}.
\end{equation}
As $t \to \infty$, we have $\frac{\wasser^{2}(\mu, \gamma)}{t^2} \to 1$ but $\frac{D(\mu \Vert \gamma)}{t^{2}} \to \frac{1}{2}$. On the other hand, as before, taking $t \to 1$ shows that our inequality is sharp. 
\end{exmpl}

\bibliographystyle{amsplain}
\bibliography{shoulderofgiants,morereferences}
\end{document}